\documentclass[a4paper,11pt]{article}
\usepackage[latin1]{inputenc}
\usepackage[english]{babel}
\usepackage{amsmath}
\usepackage{amsfonts}
\usepackage{amssymb}
\usepackage{epsfig}
\usepackage{amsopn}
\usepackage{amsthm}
\usepackage{color}
\usepackage{graphicx}
\usepackage{enumerate}
\parindent=4pt
\parskip=2pt
\addtolength{\hoffset}{-1cm} \addtolength{\textwidth}{2.4cm}
\addtolength{\voffset}{-1cm} \addtolength{\textheight}{2cm}
\newtheorem{theorem}{Theorem}[section]

\newtheorem{lemma}[theorem]{Lemma}
\newtheorem{proposition}[theorem]{Proposition}

\newtheorem{remark}[theorem]{Remark}
\newtheorem*{theorem*}{Theorem}
\newtheorem*{lemma*}{Lemma}
\newtheorem*{remark*}{Remark}
\newtheorem*{definition*}{Definition}
\newtheorem*{proposition*}{Proposition}
\newtheorem*{corollary*}{Corollary}
\numberwithin{equation}{section}
%

\newcommand{\real}{\mathbb{R}}



\let\ced=\c         

\def\a{\alpha}
\def\b{\beta}

\def\e{\varepsilon}        



\def\cp{{\cal P}}


\def\qed{\,\unskip\kern 6pt \penalty 500
\raise -2pt\hbox{\vrule \vbox to8pt{\hrule width 6pt
\vfill\hrule}\vrule}\par}
\definecolor{darkblue}{rgb}{0.05, .05, .65}
\definecolor{darkgreen}{rgb}{0.1, .65, .1}
\definecolor{darkred}{rgb}{0.8,0,0}
\newcommand{\beqn}{\begin{equation}}
\newcommand{\eeqn}{\end{equation}}
\newcommand{\bear}{\begin{eqnarray}}
\newcommand{\eear}{\end{eqnarray}}
\newcommand{\bean}{\begin{eqnarray*}}
\newcommand{\eean}{\end{eqnarray*}}
%

\begin{document}

\title{\huge \bf Instantaneous shrinking and single point extinction for viscous Hamilton-Jacobi equations with fast diffusion}

\author{
\Large Razvan Gabriel Iagar\,\footnote{Instituto de Ciencias
Matem\'aticas (ICMAT), Nicolas Cabrera 13-15, Campus de Cantoblanco,
Madrid, Spain, \textit{e-mail:}
razvan.iagar@icmat.es},\footnote{Institute of Mathematics of the
Romanian Academy, P.O. Box 1-764, RO-014700, Bucharest, Romania.}
\\[4pt] \Large Philippe Lauren\c cot\,\footnote{Institut de
Math\'ematiques de Toulouse, UMR~5219, Universit\'e de
Toulouse, CNRS, F--31062 Toulouse Cedex 9, France. \textit{e-mail:}
Philippe.Laurencot@math.univ-toulouse.fr}\\ [4pt]
\Large Christian Stinner\, \footnote{Technische Universit\"at Kaiserslautern, Felix-Klein-Zentrum f\"ur Mathematik, Paul-Ehrlich-Str. 31, D-67663 Kaiserslautern, Germany, \textit{e-mail:} stinner@mathematik.uni-kl.de} }
\date{\today}
\maketitle

\begin{abstract}
For a large class of non-negative initial data, the solutions to the
quasilinear viscous Hamilton-Jacobi equation $\partial_t u-\Delta_p
u+|\nabla u|^q=0$ in $(0,\infty)\times\real^N$ are known to vanish
identically after a finite time when $2N/(N+1)<p\leq2$ and
$q\in(0,p-1)$. Further properties of this extinction phenomenon are
established herein: \emph{instantaneous shrinking} of the support is
shown to take place if the initial condition $u_0$ decays
sufficiently rapidly as $|x|\to\infty$, that is, for each $t>0$, the
positivity set of $u(t)$ is a bounded subset of $\real^N$ even if
$u_0>0$ in $\real^N$. This decay condition on $u_0$ is also shown to
be optimal by proving that the positivity set of any solution
emanating from a positive initial condition decaying at a slower
rate as $|x|\to\infty$ is the whole $\real^N$ for all times. The
time evolution of the positivity set is also studied: on the one
hand, it is included in a fixed ball for all times if it is
initially bounded (\emph{localization}). On the other hand, it
converges to a single point at the extinction time for a class of
radially symmetric initial data, a phenomenon referred to as
\emph{single point extinction}. This behavior is in sharp contrast
with what happens when $q$ ranges in $[p-1,p/2)$ and $p\in
(2N/(N+1),2]$ for which we show \emph{complete extinction}.
Instantaneous shrinking and single point extinction take place in
particular for the semilinear viscous Hamilton-Jacobi equation when
$p=2$ and $q\in (0,1)$ and seem to have remained unnoticed.
\end{abstract}

\vspace{1.0 cm}

\noindent {\bf AMS Subject Classification:} 35K59, 35K67, 35K92,
35B33, 35B40.

\medskip

\noindent {\bf Keywords:} finite time extinction, singular
diffusion, viscous Hamilton-Jacobi equation, gradient absorption, instantaneous shrinking, single point extinction.

\section{Introduction and results}\label{sec1}

We perform a detailed study of the finite time
extinction phenomenon for a class of diffusion equations with a
gradient absorption term, of the form
\begin{equation}\label{eq1}
\partial_tu-\Delta_pu+|\nabla u|^q=0, \quad u=u(t,x), \ (t,x)\in
(0,\infty)\times\real^N,
\end{equation}
where, as usual,
$$
\Delta_pu={\rm div}(|\nabla u|^{p-2}\nabla u)\ ,
$$
supplemented with the  initial condition
\begin{equation}\label{eq2}
u(0)=u_0, \qquad x\in\real^N.
\end{equation}
Throughout the paper we assume that
\begin{equation}
u_0\in W^{1,\infty}(\real^N)\ , \qquad u_0\ge 0\ , \ u_0\not\equiv 0\ . \label{hypIC}
\end{equation}
 The range of exponents under consideration is
\begin{equation}\label{exp}
p_c:=\frac{2N}{N+1}<p\leq2, \qquad 0<q<p-1,
\end{equation}
in which it is already known that extinction in finite time takes
place for initial data decaying sufficiently rapidly as $|x|\to\infty$, that is: there exists $T_e\in(0,\infty)$ such that
$u(T_e,x)=0$ for any $x\in\real^N$, but $\|u(t)\|_{\infty}>0$ for
any $t\in(0,T_e)$. The time $T_e$ is usually referred to as \emph{the extinction time} of the solution $u$. Let us notice at this point that the range of exponents includes both the \emph{semilinear case} $p=2$ (with $q\in(0,1)$), and the \emph{singular diffusion case} $p\in (p_c,2)$ (with
$q\in (0,p-1)$). These two diffusion operators usually depart strongly in their qualitative properties, but in the range \eqref{exp}, the gradient absorption term is dominating the evolution, thus explaining the similarity of the results for the linear and singular diffusions.

The main feature concerning equations such as \eqref{eq1} is the
\emph{competition} between the two terms in the equation: a
diffusion one and an absorption one, in form of a gradient term. As
the properties of the diffusion equation and of the
Hamilton-Jacobi equation (without diffusion) are very different, it
is of interest to study the effects of their merging in the
equation, depending on the relative positions of the exponents $p$
and $q$.

For the semilinear case $p=2$, a number of results are available by
now, due to the possibility of using semigroup theory or linear
techniques. Thus, it has been shown that there appear two critical
values for the exponent $q$, namely $q=q_*:=(N+2)/(N+1)$ and $q=1$.
The qualitative theory, including the large time behavior, is now
well understood for exponents $q>1$ after the series of works
\cite{ATU04, BSW02, BKaL04, BL99, BVD13, BGK04, GL07, GGK03, Gi05}.
In this range $q>1$, the diffusion has an important influence on the
evolution: either completely dominating, when $q>(N+2)/(N+1)$,
leading to asymptotic simplification, or having a similar effect to
the Hamilton-Jacobi part for $q\in(1,q_*]$, leading to a resonant,
logarithmic-type behavior for $q=q_*$ \cite{GL07}, or a behavior
driven by \emph{very singular solutions} for $q\in (1,q_*)$
\cite{BKaL04}. Much less is known for the complementary range of
$q$, that is, $0<q\leq1$, where the Hamilton-Jacobi term starts to
have a very strong influence on the dynamics. The limit exponent
$q=1$ is highly critical and not yet fully understood though optimal
temporal decay estimates are established in \cite{BRV97}, while
extinction in finite time has been shown for $q\in (0,1)$, see
\cite{BLS01, BLSS02, Gi05}. Still, a complete understanding of the
extinction phenomenon is missing and requires a deeper study. So
far, the class of initial data for which finite time extinction
takes place has not yet been identified and more qualitative
information on the behavior of the support of the solution and on
the rate and shape of the extinction are still lacking. The purpose
of the present work is to shed some further light on the extinction
phenomenon for \eqref{eq1} and provide more detailed information on
the above mentioned issues, not only in the semilinear case $p=2$
but also when the diffusion is nonlinear and singular corresponding
to $p\in (p_c,2)$.

Indeed, considering the quasilinear diffusion operator $\Delta_p$ is
a natural nonlinear generalization, but due to the fact that linear
techniques are not available anymore, its study is more involved and
results were obtained only recently. We are interested in the fast
diffusion case $p\in (p_c,2)$, for which the qualitative theory is
developed starting from \cite{IL12}, where all exponents $q>0$ are
considered. In particular, two critical exponents are identified in
\cite{IL12}: $q=q_*:=p-N/(N+1)$ and $q=p/2$. These critical values
limit ranges of parameters with different behaviors: diffusion
dominates for $q>q_*$, while there is a balance between diffusion
and absorption for $q\in [p/2,q_*]$ leading to logarithmic decay for
$q=q_*$, algebraic decay for $q\in(p/2,q_*)$, and exponential decay
for $q=p/2$. Finally, finite time extinction occurs for $0<q<p/2$
and this is the range of the parameter $q$ we are interested in. We
actually perform a deeper study of the extinction range $0<q<p/2$
with $p\in (p_c,2)$ which reveals very interesting and surprising
features having not been observed before, as far as we know. We
mention at this point that we restrict the analysis to $p>p_c$ as
there is a competition in this range between the diffusion term
which aims at positivity and the gradient absorption term which is
the driving mechanism of extinction. We left aside the critical case
$p=p_c$ which is trickier to handle, as well as the case $p\in
(1,p_c)$, for which a different competition takes place. Indeed,
finite time extinction is also known to take place for the diffusion
equation without the gradient absorption term when $p\in (1,p_c)$,
and there is then a competition between two extinction mechanisms
stemming from the diffusion and the gradient absorption,
respectively.

Before describing more precisely our results, let us recall that the finite time extinction phenomenon has already been observed as the outcome of a competition between diffusion and absorption effects, in particular for another important diffusive model, the porous medium equation with zero order absorption
\begin{equation}\label{PMEA}
\partial_tu-\Delta u^m+u^q=0, \quad (t,x)\in(0,\infty)\times\real^N,
\end{equation}
with $m>0$ and $q\in (0,1)$. A striking feature is the
\emph{instantaneous shrinking of the support} for non-negative solutions, that is, the solution $u(t)$ to \eqref{PMEA} at time $t$ is compactly supported for all $t>0$ even though the initial condition $u(0)$ is positive in $\real^N$. This phenomenon was first noticed in \cite{EK} for $m=1$ and later extended to \eqref{PMEA} and its variants (including variable coefficients in front of the absorption term and/or an additional convection term) for $q\in (0,1)$ and $m>q$ in \cite{Ab98, BU, EK, FH, GK90, HV89, K90, K93, KN92}, as well as to other equations such as
$$
\partial_t u - \Delta_p u + |u|^{q-1}u = 0, \quad (t,x)\in(0,\infty)\times\real^N,
$$
for suitable ranges of the parameters $p$ and $q$, see \cite{De08,
KS96} and the references therein. More precise information on the
behavior near the extinction time are available for \eqref{PMEA} in
one space dimension $N=1$ when $0<m\le 1$ and $0<q<1$, \cite{FGV, FV, HV92}. It is shown that the case $q=m$ is critical and the
mechanism of extinction is different whether $m<q<1$ or $0<q<m$: in
the former, \emph{simultaneous or complete extinction} occurs, that
is, the solution is positive everywhere in $\real^N$ prior to the
extinction time and vanishes identically at the extinction time. In
the latter, \emph{single point extinction} takes place as shown in
\cite{FV}, that is, the positivity set of $u(t)$ shrinks to a point
as $t$ approaches the extinction time. The limit case $q=m$ is
simpler and explicit, being the only case studied also for $N>1$
\cite{DPS}.

The purpose of this paper is to investigate the occurrence of the
above mentioned phenomena for Eq.~\eqref{eq1}. Concerning
instantaneous shrinking, we show that it takes place in the range
$q\in (0,p-1)$, but only for initial conditions which decay
sufficiently fast as $|x|\to\infty$. On the contrary, for
positive initial data with a slow decay as $|x|\to\infty$, the
solution is positive everywhere in $\real^N$ for all times. This is
in sharp contrast with the situation for \eqref{PMEA}, where it is
sufficient that the initial condition decays to zero as
$|x|\to\infty$ for instantaneous shrinking to take place \cite{Ab98,
BU, EK}. The occurrence of instantaneous shrinking in \eqref{eq1}
thus not only depends on the parameters $p$ and $q$ but also on the
shape of the initial condition. Coming back to complete extinction,
we also show that this is the generic behavior when $q\in
[p-1,p/2)$, while we identify a class of initial data for which
single point extinction occurs when $q\in(0,p-1)$.

\bigskip

\noindent \textbf{Main results}. We denote in the sequel the (spatial) positivity set $\mathcal{P}(t)$ at time $t\geq0$ of a solution $u$ to \eqref{eq1}-\eqref{eq2} by
\begin{equation}\label{pos.set}
\mathcal{P}(t):=\{x\in\real^N: u(t,x)>0\}.
\end{equation}
We begin with some features of the time evolution of the positivity set according to the decay of $u_0$ at infinity.

\begin{theorem}[Instantaneous shrinking and localization]\label{th.inst}
Let $u$ be a solution to the Cau\-chy problem \eqref{eq1}-\eqref{eq2} with an initial condition $u_0$ satisfying \eqref{hypIC} and
\begin{equation}\label{init.decay}
u_0(x)\leq C(1+|x|)^{-\theta}, \qquad x\in\real^N, \qquad
\theta>\frac{q}{1-q}
\end{equation}
for some $C>0$, and exponents $p$, $q$ as in \eqref{exp}. Then:
\begin{itemize}
\item[(i)] Instantaneous shrinking: for any $t>0$, $\mathcal{P}(t)$ is a bounded subset of $\real^N$.
\item[(ii)] Localization: for any $\tau>0$ there exists $\varrho_\tau>0$ such that $\mathcal{P}(t)\subseteq B(0,\varrho_\tau)$ for all $t\ge \tau$.
\item[(iii)] Extinction: there is $T_e>0$ such that $u(t)\equiv 0$ for all $t\ge T_e$.
\end{itemize}
\end{theorem}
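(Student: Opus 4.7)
\emph{Strategy.} All three assertions follow from the comparison principle applied to carefully chosen radial supersolutions; the point is that in the range $q<p-1$ the gradient absorption $|\nabla U|^q$ dominates the $p$-Laplacian diffusion $\Delta_p U$ in the asymptotic regime where the supersolution is small. The sharp role of the decay exponent in \eqref{init.decay} is to turn this domination into supersolutions whose positivity set is bounded for every positive time.

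\emph{Step 1 -- Instantaneous shrinking (i).} The plan is to construct a supersolution of the form
\begin{equation*}
W(t,x) := \bigl( M(1+|x|)^{-\theta} - h(t) \bigr)_+\,,
\end{equation*}
with $M\ge C$ chosen so that $W(0,\cdot)\ge u_0$ thanks to \eqref{init.decay}, and $h\in C^1([0,\infty))$ an increasing function with $h(0)=0$ to be determined. For any $t>0$, the positivity set $\{W(t,\cdot)>0\}$ is the ball of radius $(M/h(t))^{1/\theta}-1$ centered at the origin, hence bounded. The comparison inequality $u\le W$ then yields (i). The supersolution condition inside $\{W>0\}$ reads
\begin{equation*}
h'(t) \le |\nabla W(t,x)|^q - \Delta_p W(t,x)\,.
\end{equation*}
A direct calculation gives $|\nabla W|^q \sim (M\theta)^q(1+|x|)^{-q(\theta+1)}$ and $|\Delta_p W|\sim (M\theta)^{p-1}(1+|x|)^{-(p-1)(\theta+1)-1}$, and the inequality $(p-1)(\theta+1)+1>q(\theta+1)$ (equivalent to $p-1>q$) ensures that the diffusion decays strictly faster than the gradient absorption, so that $-\Delta_p W$ can be absorbed into $|\nabla W|^q$ after enlarging $M$ if necessary. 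Evaluating $|\nabla W|^q$ at the boundary of the positivity set (where it takes its smallest value) reduces the condition on $h$ to a Bernoulli-type ODE
\begin{equation*}
h'(t) \le c\, h(t)^{\,q(\theta+1)/\theta}\,,
\end{equation*}
which admits a positive solution with $h(0)=0$ precisely because $q(\theta+1)/\theta<1$, i.e.\ $\theta>q/(1-q)$. This is the step where \eqref{init.decay} is used in its sharp form.

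\emph{Step 2 -- Localization (ii).} Given $\tau>0$, Step~1 provides a radius $R_\tau$ with $\mathcal{P}(\tau)\subset B(0,R_\tau)$. I would then compare $u$ on $[\tau,\infty)\times\real^N$ with a compactly supported stationary supersolution of the classical form
\begin{equation*}
V(x) = K\bigl(\varrho - |x|\bigr)_+^{(p-q)/(p-1-q)}\,,
\end{equation*}
whose exponent $(p-q)/(p-1-q)$ is the standard one balancing $-\Delta_p V$ and $|\nabla V|^q$ near the free boundary $\{|x|=\varrho\}$, again using $q<p-1$. Choosing $\varrho>R_\tau$ and $K$ large enough so that $V\ge u(\tau,\cdot)$ pointwise, comparison gives $\mathcal{P}(t)\subseteq B(0,\varrho)$ for all $t\ge\tau$.

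\emph{Step 3 -- Extinction (iii).} Once $u(\tau,\cdot)$ is bounded and compactly supported, assertion (iii) follows directly from the finite-time extinction results for \eqref{eq1} cited in the introduction; the assumption \eqref{init.decay} is in particular strong enough to place $u$ in the class of initial data covered there.

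\emph{Main obstacle.} The delicate point is Step~1: verifying the supersolution inequality for $W$ uniformly on $\{W(t,\cdot)>0\}$, which can be a large ball for small $t$, and not just asymptotically as $|x|\to\infty$. Near the origin the sign of $\Delta_p W$ is favorable, and at infinity $q<p-1$ gives the necessary domination, but the intermediate regime requires a quantitative tuning of $M$ and of the ODE governing $h$. The threshold $\theta>q/(1-q)$ is what makes the whole construction work, and its sharpness is consistent with the companion non-shrinking result announced in the introduction: when $\theta\le q/(1-q)$ the Bernoulli ODE for $h$ admits only $h\equiv0$, so this line of attack collapses.
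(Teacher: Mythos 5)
Your overall strategy matches the paper's in outline (supersolution comparison for (i), a localization argument for (ii), and an appeal to known finite-time extinction for (iii)), but Step~1 as written has a genuine gap that your own ``main obstacle'' paragraph flags but does not close, and the fix you suggest points in the wrong direction.

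The issue is the scaling in $M$. For your ansatz $W=(M(1+|x|)^{-\theta}-h(t))_+$, the gradient term scales as $|\nabla W|^q\propto M^{q}$ while the diffusion term scales as $\Delta_p W\propto M^{p-1}$; since $q<p-1$, \emph{enlarging} $M$ makes $\Delta_p W$ grow strictly faster than $|\nabla W|^q$, so ``absorbing $-\Delta_p W$ into $|\nabla W|^q$ after enlarging $M$'' is exactly backwards. Concretely, the radius $R_M$ beyond which $|\nabla W|^q$ dominates $|\Delta_p W|$ grows like a positive power of $M$, so for moderate $r$ and large $M$ the sign of $-\Delta_p W + |\nabla W|^q$ can be negative, and the supersolution inequality fails there. (A short numerical check with, say, $p=2$, $q=1/2$, $N=2$, $\theta=2$, $M=100$ already gives $|\nabla W|^q-\Delta_p W<0$ at $r=1$.) Meanwhile $M\ge C$ is forced by \eqref{init.decay}, so $M$ is not at your free disposal. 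As stated, your comparison ``on $\{W>0\}$'' therefore does not go through.

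The paper closes this gap with two devices you would need to add. First, it uses an ansatz with an outer exponent $\gamma=(p-q)/(p-q-1)>1$, namely $\Sigma=[A(1+|x|^\alpha)^{-1}-\eta(t)]_+^{\gamma}$; the exponent is chosen precisely so that $(\gamma-1)(p-1)-1=q(\gamma-1)$, which makes $\Delta_p\Sigma$ and $|\nabla\Sigma|^q$ carry the \emph{same} power of $y_+$ near the free boundary, so a common factor $y_+^{q(\gamma-1)}$ can be pulled out and the remaining inequality is uniform in $r$; your exponent-one ansatz lacks this structure, which is exactly why the intermediate range is delicate. Second, the paper only compares on $(0,t_0)\times(\real^N\setminus B(0,R))$ for suitable large $R$, imposing the boundary condition $\Sigma(t,\cdot)\ge \|u_0\|_\infty\ge u(t,\cdot)$ on $\partial B(0,R)$; this sidesteps both the non-differentiability of the supersolution at the origin and the bad intermediate regime. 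If you restrict your $W$ to $|x|>R$ and match the boundary data in the same way, your Bernoulli ODE for $h$ and the sharpness of $\theta>q/(1-q)$ survive, but the details need to be done and are more fragile than in the paper's calculation because $|\nabla W|$ does not vanish at the free boundary. For Step~2 the paper proceeds differently from you: instead of a single compactly supported tent $V(x)=K(\varrho-|x|)_+^{\omega}$ (which has a concave kink at the origin to justify), it uses the translated exact solutions $\kappa_{p,q}|x-x_0|^{\omega}$ of $\mathcal{L}u=0$ and compares at each individual $x_0$ with $|x_0|$ large; this gives localization pointwise with no kink issues and also yields the infinite waiting time result as a bonus. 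Your Step~3 matches the paper's.
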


In other words, the dynamics forces the support of the solution to
become compact immediately (at any time $t>0$) even if $u_0$ is
positive in $\real^N$, that is, $\mathcal{P}(0)=\real^N$. It then
remains confined inside a ball for $t\ge \tau>0$, the radius of the
ball depending only on $\tau$. Two steps are needed to prove
Theorem~\ref{th.inst}: we first construct a supersolution to
\eqref{eq1} on $(0,t_0)\times\real^N$ for a sufficiently small time
$t_0$ which is positive for $t=0$ but has compact support for $t\in
(0,t_0)$. Due to the gradient absorption term, it does not seem to
be possible to adapt the approach used for \eqref{PMEA} and the
supersolution has to be constructed in a different way. The second
step is to establish that solutions to \eqref{eq1}-\eqref{eq2}
emanating from compactly supported initial data enjoy the
localization property, that is, their support stays forever in a
fixed ball of $\real^N$. Combining these two steps provides the
first two assertions of Theorem~\ref{th.inst}, the last assertion
being a straightforward consequence of the compactness of the
support for positive times, \cite[Corollary~9.1]{Gi05}, and
\cite[Theorem~1.2(iii)]{IL12}.

Theorem~\ref{th.inst} turns out to be false under a sole condition of decay to zero at infinity of $u_0$ and the following result shows a strikingly different behavior for initial data with a sufficiently slow spatial decay at infinity.

\begin{theorem}[Non-extinction and non-localization]\label{th.noninst}
Let $u$ be a solution to the Cauchy problem \eqref{eq1}-\eqref{eq2} with an initial condition $u_0$ satisfying \eqref{hypIC} and
\begin{equation}\label{init.decay2}
\lim\limits_{|x|\to\infty}|x|^{q/(1-q)}u_0(x)=\infty, \qquad
u_0(x)>0 \ {\rm for \ any } \ x\in\real^N,
\end{equation}
and exponents $p$, $q$ as in \eqref{exp}. Then
$$
\mathcal{P}(t)=\real^N \qquad {\rm for \ any} \ t>0.
$$
\end{theorem}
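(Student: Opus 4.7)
The plan is to exhibit a positive sub-solution $V$ of \eqref{eq1}--\eqref{eq2} satisfying $V(0,\cdot)\le u_0$ and $V(t,x)>0$ for every $(t,x)\in(0,\infty)\times\real^N$. The comparison principle available for Eq.~\eqref{eq1} in the range \eqref{exp} will then force $u(t,x)\ge V(t,x)>0$, so that $\mathcal{P}(t)=\real^N$ for every $t>0$.

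First I would reduce to a radially symmetric, smooth, non-increasing minorant $U_0:[0,\infty)\to(0,\infty)$ of $u_0$ satisfying $U_0>0$ on $[0,\infty)$ and $r^{q/(1-q)}U_0(r)\to\infty$ as $r\to\infty$. Such a minorant can be built from \eqref{init.decay2} and the positivity/continuity of $u_0$, for instance by smoothing the pointwise minimum of a profile $c(1+r)^{-\theta}$ (for some small $c>0$ and some $\theta\in(0,q/(1-q))$) and the radial infimum $\inf_{|y|\le r}u_0(y)$. By comparison, it then suffices to prove the theorem for $u_0$ replaced by $U_0$.

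The construction of $V$ is guided by the heuristic that, at large $|x|$, the diffusion term is lower order than $|\nabla u|^q$ and Eq.~\eqref{eq1} is approximately the Hamilton--Jacobi equation $\pa_tu+|\nabla u|^q=0$. Plugging $u(t,r)\sim r^{-\theta}$ into this reduced equation gives $u^{1-q}(t,r)\sim u_0(r)^{1-q}-c\,t\,r^{-q}$, whose right-hand side remains positive as $r\to\infty$ \emph{precisely} when $\theta<q/(1-q)$, which matches the borderline in \eqref{init.decay2}. I would therefore try a radial ansatz
\begin{equation*}
V(t,x):=\bigl[\,\Phi(|x|)^{1-q}-\l\, t\,\Psi(|x|)\,\bigr]_+^{1/(1-q)},
\end{equation*}
with $\Phi\le U_0$ a decreasing profile behaving like $c(1+r)^{-\theta}$ at infinity, and $\Psi$ a positive weight satisfying $\Psi(r)/\Phi(r)^{1-q}\to 0$ as $r\to\infty$ (e.g.\ $\Psi(r)=(1+r)^{-q}$). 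The sub-solution inequality $\pa_tV-\D_pV+|\nabla V|^q\le 0$ would then be verified term by term in radial coordinates by tuning $\l>0$ small so that the contribution of $-\pa_tV=\l\Psi V^q/(1-q)$ dominates $|\nabla V|^q$ uniformly in $r$; the diffusion term enters as a lower-order correction at infinity (and aids the inequality in the region where $\D_pV\ge 0$).

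The ansatz above yields positivity of $V$ only on a short time interval $(0,T_1)$, since $V(t,0)^{1-q}=\Phi(0)^{1-q}-\l t\Psi(0)$ hits zero at some $T_1>0$. To extend $V$ to all $t>0$, I would iterate the construction: at $t=T_1/2$ the slice $V(T_1/2,\cdot)$ is radial, bounded below by a positive constant on every ball (and by the strong positivity of fast-diffusion comparison for $p\in(p_c,2]$), and still decays at a rate slower than $r^{-q/(1-q)}$, so a new sub-solution of the same type can be restarted on $[T_1/2,T_2)$, and so on. By exploiting the scale invariance of the class of admissible profiles one arranges the successive lifespans to sum to $+\infty$, producing a global-in-time positive sub-solution; alternatively, one glues $V$ with a constant sub-solution near the origin through the viscosity min-rule. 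The hard part will be precisely this uniform-in-$r$ verification of the sub-solution inequality together with the global-in-time extension, because the balance of the three terms in \eqref{eq1} is qualitatively different near the origin, where the gradient term vanishes and the diffusion controls the sign, and far from it, where the gradient term is leading.
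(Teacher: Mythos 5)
Your overall strategy is sound, and the heuristic (the borderline decay $r^{-q/(1-q)}$, comparison from below with a positive subsolution) is exactly the one the paper uses. The gap is in the ansatz and the extension to all $t>0$. With the additive form $V(t,x)=[\Phi(|x|)^{1-q}-\lambda t\Psi(|x|)]_+^{1/(1-q)}$, a direct computation gives $-\partial_tV=\tfrac{\lambda}{1-q}\Psi V^q$ and $|\nabla V|^q\sim V^{q^2}|\partial_rG|^q$ with $G=\Phi^{1-q}-\lambda t\Psi$, so the relevant inequality reduces to $\lambda\Psi G^q\gtrsim|\partial_rG|^q$; this forces $\lambda$ to be bounded \emph{below} (not small as you suggest), and at the origin $V(t,0)$ hits zero at $T_1=\Phi(0)^{1-q}/(\lambda\Psi(0))$. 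The proposed iteration cannot then cover $(0,\infty)$: at each restart $V(T_k/2,0)^{1-q}=\tfrac12\Phi_k(0)^{1-q}$, so $T_{k+1}\le T_k/2$ and $\sum_k T_k/2\le T_1<\infty$. The alternative gluing with a constant subsolution also fails, because a positive constant cannot lie below $u_0$ (which tends to $0$ at infinity) on all of $\real^N$, and taking $\Psi(0)=0$ to freeze $V(t,0)$ runs into the sign of $\Delta_p V$ at a radial maximum.

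The paper sidesteps all of this with a \emph{separable} ansatz,
\begin{equation*}
w(t,x)=(T-t)^{1/(1-q)}\bigl(a+b|x|^{p/(p-1)}\bigr)^{-q(p-1)/p(1-q)},
\end{equation*}
where $T>0$ is an arbitrarily chosen horizon and $a,b>0$ are then tuned (Lemma~6.1). The temporal exponent $1/(1-q)$ makes the factor $(T-t)^{q/(1-q)}$ common to $\partial_t w$ and $|\nabla w|^q$, so the subsolution inequality becomes essentially a time-independent inequality on the spatial profile; the $\Delta_p$-term carries an extra factor $(T-t)^{(p-1-q)/(1-q)}$ and, since $(2-p)\gamma-p+1=\tfrac{(p-1)(2q-p)}{p(1-q)}<0$, it is suppressed by taking $a$ large, uniformly in $t\in(0,T)$. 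Because $w(0,x)\sim T^{1/(1-q)}(b|x|^{p/(p-1)})^{-q(p-1)/p(1-q)}=T^{1/(1-q)}b^{-q(p-1)/p(1-q)}|x|^{-q/(1-q)}$ at infinity, the hypothesis $|x|^{q/(1-q)}u_0(x)\to\infty$ gives $w(0,\cdot)\le u_0$ for $a$ large, and $w>0$ on $(0,T)\times\real^N$. As $T$ is arbitrary, $\mathcal{P}(t)=\real^N$ for all $t>0$ follows without any iteration. You should replace your additive $V$ by a subsolution of this multiplicative form, where the time horizon is a free parameter that can be chosen arbitrarily large before the other constants are fixed.
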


Theorem~\ref{th.noninst} means that, when the initial condition has a sufficiently fat tail at infinity, it has enough mass to remain positive everywhere for all times in spite of the dominating absorption effect. When $p=2$ and $q\in (0,1)$ it is established in \cite{BLSS02} with the help of suitable subsolutions and we extend here this approach to the whole range \eqref{exp}.

A consequence of Theorem~\ref{th.noninst} is that the decay condition \eqref{init.decay} on $u_0$ is optimal for instantaneous shrinking to take place when the parameters $p$ and $q$ satisfy \eqref{exp}. The optimality of the range \eqref{exp} of the exponents $p$ and $q$ requires a different argument: we show in Proposition~\ref{prop.nonloc} that, when $p\in (p_c,2)$ and $q\in [p-1,p/2)$, only complete extinction takes place,  that is,
\begin{equation*}
\mathcal{P}(t)=\real^{N} \qquad {\rm for \ any} \ t\in (0,T_e).
\end{equation*}
the finiteness of the extinction time $T_e$ for that range being
proved in \cite{IL12} for initial data decaying suffciently rapidly
at infinity.

Returning to finite time extinction, we are able to improve
Theorem~\ref{th.inst} as well as \cite[Theorem~1.2(iii)]{IL12},
showing that solutions vanish after a finite time even in the limit
case for the decay $\theta=q/(1-q)$ in \eqref{init.decay}, which is
excluded in Theorem~\ref{th.inst}. However, no information on the
evolution of the positivity set is provided.

\begin{theorem}[Improved finite time extinction]\label{th.improv}
Let $u$ be a solution to the Cauchy problem \eqref{eq1}-\eqref{eq2},
with an initial condition $u_0$ satisfying \eqref{hypIC} and
\begin{equation}\label{init.decay3}
u_0(x)\leq C_0(1+|x|)^{-q/(1-q)}, \qquad x\in\real^N,
\end{equation}
for some $C_0>0$ and $p$, $q$ as in \eqref{exp}. Then extinction in
finite time takes place: there exists $T_e\in(0,\infty)$ such that $u(T_e,x)=0$ for any $x\in\real^N$, but $\|u(t)\|_{\infty}>0$ for any $t\in(0,T_e)$.
\end{theorem}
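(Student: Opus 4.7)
The plan is to construct an explicit supersolution $\Sigma$ of~\eqref{eq1} with $\Sigma(0,\cdot)\geq u_{0}$ pointwise on $\real^{N}$ and $\Sigma(T,\cdot)\equiv 0$ for some $T\in(0,\infty)$; the comparison principle will then force $u\leq\Sigma$, so that $u$ is extinguished by time $T$. The critical decay rate in \eqref{init.decay3} points naturally to the separable ansatz
\[
\Sigma(t,x) \;=\; g(t)\,\phi(x), \qquad \phi(x) \;:=\; (1+|x|)^{-\alpha}, \quad \alpha \;:=\; \frac{q}{1-q}.
\]
The reason $\alpha=q/(1-q)$ is borderline is the algebraic identity $(\alpha+1)q=\alpha$, which yields $|\nabla\phi(x)|^{q} = \alpha^{q}\phi(x)$ for $x\neq 0$: the gradient absorption evaluated on $\phi$ is exactly proportional to $\phi$ itself, allowing a clean separation of variables.

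I would then pick $g$ to solve the extinction ODE $g'+\alpha^{q}g^{q}=0$ with $g(T)=0$, namely $g(t) = [(1-q)\alpha^{q}(T-t)_{+}]^{1/(1-q)}$; substituting the ansatz into~\eqref{eq1} and using the identity above gives
\[
\partial_{t}\Sigma - \Delta_{p}\Sigma + |\nabla\Sigma|^{q}
\;=\; \bigl(g'+\alpha^{q}g^{q}\bigr)\phi \;-\; g^{p-1}\Delta_{p}\phi
\;=\; -\,g(t)^{p-1}\,\Delta_{p}\phi(x),
\]
so that the supersolution property reduces to verifying $\Delta_{p}\phi\leq 0$. A direct radial computation gives
\[
\Delta_{p}\phi(r) \;=\; -\alpha^{p-1}(1+r)^{-(\alpha+1)(p-1)-1}\biggl[(N-1)\,\frac{1+r}{r} \;-\; (\alpha+1)(p-1)\biggr],
\]
and the bracket is strictly positive for $r$ small thanks to the $1/r$ singularity. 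The free parameter $T$ is finally chosen large enough that $g(0)\geq C_{0}$, i.e.\ $T\geq C_{0}^{1-q}/[(1-q)\alpha^{q}]$, so that $\Sigma(0,x)\geq C_{0}(1+|x|)^{-\alpha}\geq u_{0}(x)$ by~\eqref{init.decay3}, after which comparison closes the argument.

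The hard part will be controlling the sign of the bracketed factor uniformly as $r\to\infty$: it approaches $(N-1)-(p-1)/(1-q)$, which may become negative within the admissible range~\eqref{exp} (in particular when $N=1$, or when $p$ is close to~$2$ and $q$ is large). To cover these cases I would either (i)~replace $\phi$ by a shifted or mollified variant such as $(A+|x|)^{-\alpha}$ or $(A^{2}+|x|^{2})^{-\alpha/2}$, choosing $A$ large enough to push the sign the right way, or (ii)~weaken the ODE for $g$ to $g'+\alpha^{q}g^{q}\geq K g^{p-1}$, absorbing the residual positive part of $\Delta_{p}\phi$ at large $r$ into the $K g^{p-1}$ term; because $q<p-1$ by~\eqref{exp}, the $g^{p-1}$ contribution is subdominant near $g=0$, so the modified ODE still extinguishes in finite time, at least once the effective size of the data has been lowered below the corresponding fixed point. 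The size reduction, if needed, can be obtained by first running the equation on a short interval $[0,t_{0}]$ and exploiting the $L^{\infty}$-smoothing properties of the $p$-Laplacian (available because $u$ is a subsolution of $\partial_{t}v-\Delta_{p}v=0$) before applying the supersolution argument from time $t_{0}$ onwards. The technical accounting of these corrections is where the bulk of the work lies.
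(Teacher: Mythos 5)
Your separable ansatz $\Sigma=g(t)\phi(x)$ with $\phi=(1+|x|)^{-\alpha}$, $\alpha=q/(1-q)$, is a genuinely different route from the paper's, which constructs a \emph{self-similar} supersolution $W(t,x)=(T-t)^{(p-q)/(p-2q)}f\bigl(|x|(T-t)^{(q-p+1)/(p-2q)}\bigr)$ with profile $f(y)=A(1+y^2)^{-q/2(1-q)}$ and a \emph{small} amplitude $A$. The algebraic identity $(\alpha+1)q=\alpha$ gives the clean separation of the absorption term, your reduction to the sign of $\Delta_p\phi$ is correct, and you correctly identify the obstruction: the bracket $(N-1)(1+r)/r-(\alpha+1)(p-1)$ tends to $(N-1)-(p-1)/(1-q)$ as $r\to\infty$, which is negative for every $N=1$ configuration and for much of the range \eqref{exp} when $N\ge 2$ (e.g.\ $p$ close to $2$ with $q$ close to $p-1$). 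In the paper, the analogous difficulty is avoided because the $p$-Laplacian of $W$ carries the factor $A^{p-1}$ while the other two terms carry $A$ and $A^q$, so choosing $A$ small makes the diffusive contribution uniformly subdominant for all values of the rescaled variable.

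The gap is in the fixes you sketch. Fix~(i) as written does not help: replacing $1+|x|$ by $A+|x|$ preserves both the separability identity and the limiting value of the bracket, so the sign problem at infinity persists for every $A$; and the mollified $(A^2+|x|^2)^{-\alpha/2}$ destroys the exact proportionality $|\nabla\phi|^q\propto\phi$, so the separation no longer closes. Fix~(ii) correctly passes to $g'+\alpha^qg^q= K g^{p-1}$ with $K\ge\sup_r(\Delta_p\phi/\phi)_+$, but this ODE has a repelling fixed point $g^*=(\alpha^q/K)^{1/(p-1-q)}$, and the comparison at $t=0$ forces $g(0)\ge C_0$, which may exceed $g^*$. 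The $L^\infty$-smoothing detour you invoke to reduce the level is not obviously available: the class \eqref{init.decay3} is not contained in $L^1(\real^N)$ unless $q>N/(N+1)$, which \eqref{exp} does not guarantee, so the standard $L^\infty$--$L^1$ smoothing does not apply; and even a decrease of the sup norm would not by itself give the needed spatial bound $u(t_0,\cdot)\le G_1\phi$ with $G_1<g^*$. What actually closes the argument — and is absent from your proposal — is to \emph{couple} (i) and (ii): with $\phi=(A+|x|)^{-\alpha}$ one has $K(A)\lesssim A^{-(p-2q)/(1-q)}$, hence $g^*(A)\gtrsim A^{(p-2q)/[(1-q)(p-1-q)]}$, while the comparison $u_0\le g(0)\phi$ only requires $g(0)\gtrsim C_0A^{\alpha}$; since $(p-2q)/[(1-q)(p-1-q)]>\alpha$ is equivalent to $(p-q)(1-q)>0$, which holds throughout \eqref{exp}, taking $A$ large makes $g^*\gg g(0)$ and the ODE extinguishes. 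Without this (or an equivalent) calculation the proof is not complete.
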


Theorem~\ref{th.improv} is proved in \cite{BLSS02} when $p=2$ and $q\in (0,1)$ and we extend it here to the whole range \eqref{exp}. Its proof relies on the construction of self-similar supersolutions vanishing identically after a finite time.

We now delve deeper in the extinction mechanism and aim at studying how extinction takes place. Let $u$ be a solution to \eqref{eq1} and
$T_e\in(0,\infty)$ its extinction time, assuming that $u$ vanishes
in finite time. Recalling the definition \eqref{pos.set} of the positivity set $\mathcal{P}(t)$, we define \emph{the
extinction set} of $u$ by
\begin{equation*}
\mathcal{E}(u):=\left\{
\begin{array}{l}
x\in\real^N: {\rm there \ exist} \ \{x_n\ :\ n\ge 1\}\subset\real^N, \ \{t_n\ :\ n\ge 1\}\subset(0,T_e) \\
\ {\rm such \ that} \ x_n\to x, \ t_n\to T_e \ {\rm as} \ n\to\infty, \ u(t_n,x_n)>0 \ {\rm for \ all} \ n
\end{array}
\right\}\ .
\end{equation*}
We say that $u$ presents \emph{simultaneous or complete extinction} if $\mathcal{E}(u)=\real^N$ while it presents \emph{single point extinction} when $\mathcal{E}(u)$ is a singleton. Simultaneous extinction is the most
common phenomenon; for example, it occurs for the standard
subcritical fast diffusion equation (without absorption terms).
Here, it happens that the opposite and less standard phenomenon occurs. More precisely:

\begin{theorem}[Single point extinction]\label{th.single}
Let $u$ be a solution to the Cauchy problem \eqref{eq1}-\eqref{eq2} with an initial condition $u_0$ satisfying \eqref{hypIC}, and exponents $p$, $q$ as in \eqref{exp}. Assume further that:
\begin{itemize}
\item[(a)] $u_0\in C^1(\real^N)$ is radially symmetric and radially non-increasing,
\item[(b)] $u_0$ is compactly supported in $B(0,R_0)$ for some $R_0>0$ and satisfies the following condition
\begin{equation}
u_0(x) \le \kappa_{p,q} |x-x_0|^\omega\ , \qquad x\in\real^N\,
\label{locvan}
\end{equation}
for all $x_0\in\partial B(0,R_0)$, with
\begin{equation}\label{const}
\kappa_{p,q} := \frac{p-1-q}{p-q} \left( \frac{p-1}{p-1-q} + N-1
\right)^{-1/(p-1-q)} \quad\mbox{ and }\quad \omega:=
\frac{p-q}{p-1-q}\ ,
\end{equation}
\item[(c)] and there exists $\delta_0>0$ such that
\begin{equation}\label{init.data4}
\left| \nabla u_0^{(p-q-1)/(p-q)}(x) \right| \ge \delta_0
|x|^{1/(p-1-q)}\ , \qquad x\in B(0,R_0)\ .
\end{equation}
\end{itemize}
Let $T_e\in(0,\infty)$ be the extinction time of $u$, which is
finite according to Theorem~\ref{th.inst}. Then, there exist
$\varrho_1>0$ and $\varrho_2>0$ such that
\begin{equation}\label{incl.pos}
B\left(0,\varrho_1(T_e-t)^{\sigma}\right)\subseteq\mathcal{P}(t)\subseteq
B\left(0,\varrho_2(T_e-t)^{\nu}\right) \qquad {\rm for \ any} \
t\in(T_e/2,T_e),
\end{equation}
where
$$
\sigma:=\frac{p-q-1}{(p-q)(1-q)}, \qquad
\nu:=\frac{p(p-q-1)^2}{2(p-q)(p-2q)}.
$$
Consequently, $u$ presents \emph{single point extinction} at the
origin: $\mathcal{E}(u)=\{0\}$.
\end{theorem}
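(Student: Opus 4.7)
The plan is to combine three ingredients: the radial symmetry and monotonicity of $u(t,\cdot)$ (inherited from $u_0$ via comparison), two-sided Bernstein-type gradient estimates for $v:=u^{(p-q-1)/(p-q)}=u^{1/\omega}$, and two-sided decay estimates for $M(t):=u(t,0)=\|u(t)\|_\infty$. The exponent $1/\omega$ is natural because the profile $U_{x_0}(x):=\kappa_{p,q}|x-x_0|^\omega$ appearing in assumption~(b) satisfies $|\nabla U_{x_0}^{1/\omega}|\equiv\text{const}$; indeed, the explicit values of $\omega$ and $\kappa_{p,q}$ in~\eqref{const} are tailored to make $U_{x_0}$ a stationary solution of $-\Delta_p U+|\nabla U|^q=0$ on $\real^N\setminus\{x_0\}$, as a direct computation shows. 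Combined with~(b), this turns $\{U_{x_0}\}_{x_0\in\partial B(0,R_0)}$ into a family of barriers which, through the comparison principle, yields $u(t,x_0)=0$ for every $t\ge 0$ and $x_0\in\partial B(0,R_0)$; consequently $\mathcal{P}(t)=B(0,R(t))$, where $R(t):=\sup\{|x|:u(t,x)>0\}\le R_0$.

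The main technical step is a pair of Bernstein-type gradient estimates for $v$ on the parabolic cylinder $(0,T_e)\times B(0,R(t))$, namely
\begin{equation*}
|\nabla v(t,x)|\le K_1 \qquad\text{and}\qquad |\nabla v(t,x)|\ge \delta_1 |x|^{1/(p-1-q)}\,,
\end{equation*}
the second bound propagating~\eqref{init.data4}. The idea is to derive the parabolic equation satisfied by $v$ and then apply a parabolic maximum principle to $|\nabla v|^2$ (for the upper bound) and to $|\nabla v|^2-\delta^2|x|^{2/(p-1-q)}$ (for the lower bound), first on smooth non-degenerate approximations of the equation and then passing to the limit. The specific power $|x|^{1/(p-1-q)}$ in the auxiliary function is exactly the one matching the scaling of the stationary barriers $U_{x_0}$, chosen so that the resulting differential inequality closes. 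This is the main obstacle of the proof since the PDE for $v$ has singular coefficients near $\{u=0\}$, the origin is a singular point of the barrier, and the maximum principle has to be applied on the cylinder while carefully tracking the free boundary $\partial\mathcal{P}(t)$.

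The third ingredient is a pair of decay estimates for $M(t)$. For the upper decay, we test the equation against $u^{r-1}$ for a suitable $r>1$, combine integration by parts with Sobolev/Gagliardo-Nirenberg inequalities and the support confinement of Step~1, and interpolate with $L^\infty$ to obtain $M(t)\le C(T_e-t)^{p(p-q)/[2(p-2q)]}$. For the lower decay, the upper Bernstein bound translates into the pointwise estimates $|\nabla u|\le Cu^{1/(p-q)}$ (since $1-1/\omega=1/(p-q)$) and $|\Delta_p u|\le Cu^{q/(p-q)}$; evaluating the equation at $x=0$ (where $\nabla u$ vanishes by radial symmetry, in a suitable weak sense when $p<2$) produces the ODE $M'(t)\ge -CM(t)^q$, which integrates, together with $M(T_e)=0$, to $M(t)\ge c(T_e-t)^{1/(1-q)}$.

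Once these pieces are in place,~\eqref{incl.pos} follows by two elementary integrations along a radius. The upper Bernstein bound and $v(t,0)=M(t)^{1/\omega}$ yield $v(t,r)\ge M(t)^{1/\omega}-K_1 r$, so $u(t,x)>0$ whenever $|x|<M(t)^{1/\omega}/K_1$; the lower decay of $M$ then gives the lower inclusion in~\eqref{incl.pos} with exponent $\sigma=1/[\omega(1-q)]=(p-q-1)/[(p-q)(1-q)]$. Conversely, integrating the lower Bernstein bound from $0$ to $R(t)$ (where $v$ vanishes) and using $v(t,0)=M(t)^{1/\omega}$ gives $M(t)^{1/\omega}\ge(\delta_1/\omega)R(t)^\omega$, hence $R(t)\le CM(t)^{1/\omega^2}$; inserting the upper decay of $M$ produces the upper inclusion with exponent $\nu=p(p-q-1)^2/[2(p-q)(p-2q)]$. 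Since $R(t)\to 0$ as $t\to T_e$, we deduce $\mathcal{E}(u)\subseteq\{0\}$, and the lower inclusion forces $0\in\mathcal{E}(u)$, giving single-point extinction at the origin.
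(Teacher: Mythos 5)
Your overall decomposition matches the paper's: (i) the stationary barriers $\kappa_{p,q}|x-x_0|^\omega$ from assumption~(b) confine $\mathcal{P}(t)$ to $B(0,R_0)$, (ii) a two-sided Bernstein-type bound for $v=u^{(p-q-1)/(p-q)}$ together with (iii) two-sided decay estimates for $M(t)=\|u(t)\|_\infty$, and then the two inclusions of \eqref{incl.pos} come from integrating along a radius. The exponents $\sigma=1/[\omega(1-q)]$ and $\nu=\frac{1}{\omega^2}\cdot\frac{p(p-q)}{2(p-2q)}$ are the same as in the paper, and the upper Bernstein bound you invoke is exactly the gradient estimate \eqref{grad.est1}. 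However, the two pieces on which the conclusion actually hinges are not established by what you sketch.

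\textbf{The lower Bernstein bound.} You propose applying a parabolic minimum principle directly to $w:=|\nabla v|^2-\delta^2|x|^{2/(p-1-q)}$, asserting that the differential inequality closes. This is precisely the step where the paper works hard: it does not apply a maximum principle to $|\nabla v|^2$ but to a Friedman--Herrero type functional
$$
J(t,r)=r^{N-1}\,|\partial_r u|^{p-2}\partial_r u+ \delta\, r^{\lambda}\,u^{\beta}\ ,
\qquad \lambda=N+\tfrac{q}{p-1-q},\ \ \beta=\tfrac{p-1}{p-q}\ ,
$$
whose first term is the radial $p$-flux $r^{N-1}|\partial_r u|^{p-2}\partial_r u$, chosen because this quantity satisfies a clean parabolic equation coming from the divergence form of \eqref{eq1}. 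On the positivity set, $J\le 0$ and $w\ge 0$ are logically equivalent, but the two functions are related through a nonlinear change of dependent variables that also involves $u$, $|\partial_r u|$ and $r$ separately; the favorable sign structure that makes the interior-maximum argument for $J$ close is not inherited by $w$ in any automatic way. Asserting that "the resulting differential inequality closes" for your $w$ is a claim, not a proof, and this is exactly where the technical content of Lemma~\ref{lem.J} (and the regularized argument in the Appendix) lives. Moreover, there is a second issue you do not address: the coefficients of the linearized operator are singular both where $\nabla u=0$ and near the free boundary of $\mathcal{P}(t)$, and the origin is singular for the auxiliary weight; the paper resolves this by passing to a regularized problem \eqref{approx2d} and tracking the dependence on the regularization parameter.

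\textbf{The lower decay of $M(t)$.} You derive $M'(t)\ge -CM(t)^q$ by evaluating the equation at $x=0$ and using the claimed pointwise bound $|\Delta_p u|\le Cu^{q/(p-q)}$. That bound does not follow from the first-order gradient estimate $|\nabla u|\le C u^{1/(p-q)}$: a gradient bound controls $|\nabla u|^q$, not the second-order $p$-Laplacian. Worse, at the radial maximum $x=0$ one has $\nabla u(t,0)=0$ and $\Delta_p u(t,0)\le 0$, so the equation gives $M'(t)=\Delta_p u(t,0)\le 0$ and no lower bound without a genuine second-derivative estimate. The paper obtains $\|u(t)\|_\infty\ge C_1(T_e-t)^{1/(1-q)}$ (inequality \eqref{zz7}) by a different route, combining the localization \eqref{zz9} with the \emph{other} gradient estimate \eqref{grad.est2}, $|\nabla u(t)|\le C\|u(s)\|_\infty^{1/q}(t-s)^{-1/q}$, as in \cite[Proposition~3.5]{HJBook}. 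Your argument as written does not close. Since both the lower gradient bound and the lower decay of $M(t)$ are indispensable for \eqref{incl.pos}, these are genuine gaps rather than routine details left to the reader.
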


As we shall see below the first inclusion in \eqref{incl.pos} holds true for any radially symmetric and radially non-increasing initial condition $u_0$ with compact support. The second inclusion requires the more restrictive conditions \eqref{locvan} and \eqref{init.data4} on $u_0$, the former guaranteeing that the positivity set of $u$ stays inside the ball $B(0,R_0)$.

As far as we know, Theorem~\ref{th.single} is the
\emph{first example} of single point extinction for equations
with gradient absorption. Single point extinction was already observed for the heat equation with zero order
absorption term in a bounded domain $\Omega$ of $\real^N$ with homogeneous Dirichlet boundary conditions
$$
\partial_t u =\Delta u -  u^q, \quad (t,x)\in (0,\infty)\times\Omega,
$$
when $q\in(0,1)$, the result being valid for a specific class of
inital conditions \cite{FH}. In space dimension $N=1$, similar
results are available for \eqref{PMEA} in the range $0<q<m<1$
\cite{FV}.

The proof of Theorem~\ref{th.single} is technically involved, following the general strategy used by Friedman and Herrero \cite{FH}, but with new and decisive contributions of the optimal gradient estimates established in \cite[Theorem~1.3]{IL12}. Notice also that we are able to drop any restriction of the type $\partial_t u(0,x)=\Delta u_0(x) - u_0(x)^q\geq 0$ on the initial data, as required in \cite{FH}.

\bigskip

\noindent \textbf{Organization of the paper.} After recalling the
well-posedness of \eqref{eq1}-\eqref{eq2} and a few properties of
solutions in Section~\ref{sec.wp}, we begin with proving the
instantaneous shrinking phenomenon, as stated in
Theorem~\ref{th.inst}, to which we devote Section~\ref{sec.inst}.
The proof of Theorem~\ref{th.inst} is completed in
Section~\ref{sec.loc}, where we prove the localization of
$\mathcal{P}(t)$, $t\ge 0$, for compactly supported initial data, as
well as some side results showing that the range \eqref{exp} is
optimal for localization to take place. The localization property
allows us to derive upper and lower bounds at the extinction time
which are gathered in Section~\ref{sec.ulbet}. We go on with the
proof of Theorem~\ref{th.noninst}, performed in
Section~\ref{sec.nonext}, and the proof of Theorem~\ref{th.improv},
done in the subsequent Section~\ref{sec.improv}. All these proofs
have in common the fact that they rely on the maximum principle,
used in suitable ways according to the case to be dealt with. In
particular, subsolutions and supersolutions of different kinds with
suitable behaviors are constructed along these sections. Finally, we
devote Section~\ref{sec.single} to the proof of
Theorem~\ref{th.single}, which is the most involved technically and
is further divided into several subsections. The paper ends with a
technical Appendix where we provide rigorous proofs for some
estimates and calculations performed only at a formal level in
Section~\ref{sec.single} for the simplicity of the reading.

\noindent\textbf{Notation.} We introduce the parabolic operator $\mathcal{L}$ defined by
\begin{equation}\label{eq.parop}
\mathcal{L} z := \partial_t z - \Delta_p z + |\nabla z|^q \quad\text{ in }\quad (0,\infty)\times\real^N\ .
\end{equation}
If $z$ is radially symmetric with respect to the space variable then, setting $r:=|x|$ and $z(t,r)=z(t,|x|)$, an alternative formula for $\mathcal{L} z$ is the following:
\begin{equation}\label{eq.paroprad}
\mathcal{L} z = \partial_t z - (p-1)|\partial_{r} z|^{p-2}\partial_r^2 z -\frac{N-1}{r} |\partial_{r} z|^{p-2}\partial_{r}z + |\partial_{r}z|^{q}\ .
\end{equation}

\section{Well-posedness}\label{sec.wp}

We collect in this section some properties of the Cauchy problem
\eqref{eq1}-\eqref{eq2} and its solutions. We first recall the
well-posedness of \eqref{eq1}-\eqref{eq2}, established in
\cite{GGK03} for $p=2$ and in \cite{IL12} for $p\in (1,2)$.

\begin{proposition}\label{prop.wp}
Let $p\in (p_c,2]$ and $q>0$. Given an initial condition $u_0$ satisfying \eqref{hypIC}, there is a unique nonnegative viscosity solution
$$
u\in \mathcal{BC}([0,\infty)\times\mathbb{R}^N) \cap L^\infty(0,\infty;W^{1,\infty}(\real^N))
$$
to \eqref{eq1}-\eqref{eq2} which is also a weak solution if $p\in (p_c,2)$ and a classical solution if $p=2$. In addition, it satisfies
\begin{equation}
0 \le u(t,x) \le \|u_0\|_\infty\ , \qquad (t,x)\in
(0,\infty)\times\real^N\ . \label{zz4}
\end{equation}
\end{proposition}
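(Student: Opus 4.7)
\textbf{Proof plan for Proposition~\ref{prop.wp}.} Since the statement combines existence, uniqueness, regularity, and the sup-norm bound, the plan is to handle the two ranges $p=2$ and $p\in(p_c,2)$ somewhat separately, then extract the uniform bound from a single comparison argument that works in both cases.

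I would begin with the pointwise bound \eqref{zz4}, because it is independent of the solution theory one adopts. The constants $0$ and $\|u_0\|_\infty$ are both classical (super)solutions of $\mathcal{L} z =0$ since $\Delta_p c=0$ and $|\nabla c|^q=0$ for any constant $c$. Hence, once a comparison principle is available for the notion of solution one is working with (viscosity comparison when $p<2$, classical/weak comparison when $p=2$), the estimate $0\le u(t,x)\le\|u_0\|_\infty$ follows immediately from $0\le u_0\le \|u_0\|_\infty$. In particular, this also provides the $L^\infty(0,\infty;L^\infty(\real^N))$ control needed in the regularity statement.

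For the semilinear case $p=2$, the equation $\partial_t u-\Delta u+|\nabla u|^q=0$ is a standard semilinear heat equation. I would set it up as a fixed point problem via Duhamel's formula
\begin{equation*}
u(t) = e^{t\Delta}u_0 - \int_0^t e^{(t-s)\Delta} |\nabla u(s)|^q\, ds,
\end{equation*}
work in the complete metric space of functions $v\in\mathcal{BC}([0,T]\times\real^N)$ with $\nabla v\in L^\infty((0,T)\times\real^N)$, and use the smoothing estimates $\|\nabla e^{t\Delta}f\|_\infty\le Ct^{-1/2}\|f\|_\infty$ together with $\|e^{t\Delta} f\|_\infty\le\|f\|_\infty$ to close a contraction on a small time interval. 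Lipschitz regularity of $u_0$ propagates so that $\nabla u$ stays bounded; iterating in time gives global existence thanks to the $L^\infty$ bound. Parabolic regularity then upgrades $u$ to a classical solution. Uniqueness in this class follows from the same contraction estimate (or from comparison).

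The case $p\in(p_c,2)$ is the main obstacle, because the $p$-Laplacian is singular where $\nabla u=0$ and, since $q<p-1<1$, the absorption $|\nabla u|^q$ is not Lipschitz either. The plan is to regularize: approximate $u_0$ by smooth strictly positive data $u_{0,\ve}$, and consider
\begin{equation*}
\partial_t u_\ve - \mathrm{div}\bigl((|\nabla u_\ve|^2+\ve)^{(p-2)/2}\nabla u_\ve\bigr) + (|\nabla u_\ve|^2+\ve)^{q/2} = 0,
\end{equation*}
which is a uniformly parabolic quasilinear equation with smooth coefficients, so that classical theory yields smooth solutions $u_\ve$ satisfying $0<u_\ve\le\|u_{0,\ve}\|_\infty$ by maximum principle. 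The key a priori estimates to derive, uniformly in $\ve$, are (i) the sup bound just mentioned, (ii) a gradient bound via a Bernstein-type computation on $|\nabla u_\ve|^2$ (exactly the kind of estimate pushed in \cite{IL12}), and (iii) time equicontinuity from the equation itself. These provide compactness in $C_{\mathrm{loc}}$, and the limit is then identified as a viscosity solution using the stability of viscosity solutions under locally uniform limits (together with a weak formulation for $p<2$). Uniqueness is obtained from the comparison principle for viscosity sub/supersolutions of \eqref{eq1}, which in turn relies on the standard doubling-of-variables argument adapted to singular $p$-Laplacian operators; the hard point here, and the reason the restriction $p>p_c$ is not really felt at this level, is that comparison for the singular $p$-Laplacian with a non-Lipschitz Hamiltonian requires one to test only at points where at least one of the two functions has a nonvanishing gradient, as in \cite{GGK03, IL12}.
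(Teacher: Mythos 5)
The paper does not give a proof of Proposition~\ref{prop.wp}: it is stated as a recollection and is proved by citation, namely to \cite{GGK03} for $p=2$ and to \cite{IL12} for $p\in(p_c,2)$ (the regularized problem \eqref{approx2} that the paper itself uses in the Appendix is precisely the vanishing-viscosity scheme from \cite[Section~6]{IL12}). Your blind sketch therefore does not mirror the paper's ``proof'' (there is none to mirror) but instead reconstructs the standard arguments underlying those two references, and it does so essentially correctly: Duhamel plus a contraction in $\mathcal{BC}\cap W^{1,\infty}$ for the semilinear case, a regularization of both $\Delta_p$ and $|\nabla\cdot|^q$ with Bernstein-type gradient bounds and stability of viscosity solutions for the singular case, and comparison with the constants $0$ and $\|u_0\|_\infty$ for \eqref{zz4}. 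One small inaccuracy in the prose: the proposition is stated for every $q>0$, not only $q\in(0,p-1)$, so the parenthetical ``since $q<p-1<1$, the absorption is not Lipschitz'' is only a remark about the subrange relevant later in the paper; for $q\ge 1$ the Hamiltonian is locally Lipschitz (indeed $C^1$ for $q>1$), and regularization is then needed only for the $p$-Laplacian part. This does not affect the validity of your plan.
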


We next show that radial symmetry and radial monotonicity are both preserved by \eqref{eq1}.

\begin{lemma}\label{lem.radial}
Let $u$ be a solution to \eqref{eq1} such that its initial condition $u_0$ satisfies \eqref{hypIC} and is radially symmetric and non-increasing (that is, for $y\in\real^N$ and $z\in\real^N$, there holds $u_0(y)\ge u_0(z)$ whenever $|y|\le |z|$). Then
$x\mapsto u(t,x)$ is radially symmetric and non-increasing for any $t>0$.
\end{lemma}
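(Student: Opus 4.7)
Both radial symmetry and radial monotonicity of $u(t,\cdot)$ follow from invariances of \eqref{eq1} combined with the uniqueness/comparison available through Proposition~\ref{prop.wp}. For radial symmetry, I would observe that $\Delta_p$ and the Hamiltonian $|\nabla\cdot|^q$ are invariant under orthogonal transformations, so for every $R\in O(N)$ the function $v(t,x):=u(t,Rx)$ is again a solution of \eqref{eq1} in the class of Proposition~\ref{prop.wp}, with initial datum $u_0(Rx)=u_0(x)$ by assumption. Uniqueness then forces $u(t,Rx)=u(t,x)$ for all $R\in O(N)$ and $(t,x)$, so that $u(t,\cdot)$ is radial.

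For radial monotonicity I would use a reflection (moving-plane) argument. Fix a direction $e\in S^{N-1}$ and a parameter $\lambda\ge 0$, and introduce the hyperplane $H_\lambda:=\{x\in\real^N: x\cdot e=\lambda\}$, the half-space $\Sigma_\lambda:=\{x\in\real^N: x\cdot e>\lambda\}$, and the reflection $T_\lambda x:=x-2(x\cdot e-\lambda)e$ across $H_\lambda$. A direct computation gives the identity $|T_\lambda x|^2-|x|^2=-4\lambda(x\cdot e-\lambda)\le 0$ for $x\in\Sigma_\lambda$ and $\lambda\ge 0$, which, together with the radial monotonicity of $u_0$, yields $u_0(T_\lambda x)\ge u_0(x)$ on $\Sigma_\lambda$. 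Since $T_\lambda\in O(N)$, the function $w(t,x):=u(t,T_\lambda x)$ again solves \eqref{eq1}, and by construction it coincides with $u$ on $H_\lambda$ while dominating $u$ at $t=0$ throughout $\Sigma_\lambda$. The comparison principle applied to the bounded pair $(u,w)$ on the cylinder $(0,\infty)\times\Sigma_\lambda$, with Dirichlet matching data on $H_\lambda$, then yields $w\ge u$ there. Choosing $x=re$ with $r>\lambda$ gives $u(t,(2\lambda-r)e)\ge u(t,re)$; for any $0\le s<r$ the admissible choice $\lambda=(r+s)/2\ge 0$ produces $u(t,se)\ge u(t,re)$, and in combination with the radial symmetry already established this is precisely the desired radial non-increase.

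\textbf{Main obstacle.} The only delicate step is the comparison principle on the unbounded half-cylinder $(0,\infty)\times\Sigma_\lambda$ with matching Dirichlet data on $H_\lambda$ and no decay of $u,w$ at spatial infinity. Since both $u$ and $w$ are bounded by $\|u_0\|_\infty$ thanks to \eqref{zz4} and lie in the solution class of Proposition~\ref{prop.wp}, I would appeal to the comparison results for bounded viscosity (respectively weak) solutions that already underlie Proposition~\ref{prop.wp} itself, as developed in \cite{GGK03} for $p=2$ and in \cite{IL12} for $p\in(p_c,2)$. If a direct statement on a half-space is not readily available, one can instead compare on $(0,T)\times(B(0,R)\cap\Sigma_\lambda)$ and pass to the limit $R\to\infty$, using the $L^\infty$-bound \eqref{zz4} together with standard barriers to control the spurious contribution on $\partial B(0,R)\cap\Sigma_\lambda$.
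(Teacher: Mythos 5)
Your proof is correct and follows essentially the same strategy as the paper: radial symmetry comes from rotational invariance plus uniqueness, and radial monotonicity from a comparison-principle argument on a half-space whose boundary is a hyperplane where the two candidates agree, with initial domination supplied by the radial monotonicity of $u_0$. The only (cosmetic) difference is that the paper compares the two \emph{translates} $v_\pm(t,x)=u(t,x\pm x_0)$ on the half-space $\{\langle x,x_0\rangle>0\}$, where the boundary matching is a consequence of the radial symmetry of $u(t,\cdot)$, whereas you compare $u$ with its \emph{reflection} $u(t,T_\lambda\cdot)$ across $H_\lambda$, where the boundary matching holds trivially because $T_\lambda$ fixes $H_\lambda$; both formulations then invoke the same comparison principle on an unbounded half-cylinder. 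Your explicit discussion of that last delicate point (comparison on a half-space with no decay at spatial infinity, handled via the $L^\infty$ bound \eqref{zz4} and exhaustion by bounded domains) is a reasonable addition; the paper simply cites the comparison principle without elaboration.
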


\begin{proof}
The radial symmetry is immediate from the rotational invariance of Eq.~\eqref{eq1} and the uniqueness statement in Proposition~\ref{prop.wp}.

Consider next $y\in\real^N$ and $z\in\real^N$ such that $|y|<|z|$ and define $x_0:=(z-y)/2$, the hyperplane $\mathcal{H}:=\{ x\in\real^N\ :\ \langle x , x_0 \rangle>0\}$, and the functions $v_\pm(t,x) := u(t,x\pm x_0)$ for $(t,x)\in (0,\infty)\times\mathcal{H}$. On the one hand, for $x\in\partial\mathcal{H}$, there holds $\langle x,x_0\rangle=0$, so that
$$
|x+x_0|^2 = |x|^2 + |x_0|^2 = |x-x_0|^2,
$$
and the radial symmetry of $u$ entails that
$$
v_+(t,x) = u(t,x+x_0) = u(t,x-x_0) = v_-(t,x) \quad
\text{ for all } t>0 \;\;\text{ and }\;\;  x\in\partial\mathcal{H}\ .
$$
On the other hand, if $x\in\mathcal{H}$, then
$$
|x+x_0|^2 = |x|^2 + |x_0|^2 + 2 \langle x,x_0\rangle \ge |x|^2 + |x_0|^2 - 2 \langle x,x_0\rangle = |x-x_0|^2,
$$
and the radial monotonicity of $u_0$ implies that
$$
v_+(0,x) = u_0(x+x_0) \le u_0(x-x_0) = v_-(0,x).
$$
Since $v_+$ and $v_-$ both solve \eqref{eq1} in $(0,\infty)\times\mathcal{H}$, we infer from the comparison principle and the previous properties that $v_+ \le v_-$ in $(0,\infty)\times\mathcal{H}$. In particular, $(y+z)/2\in\mathcal{H}$ and we obtain
$$
u(t,y) = v_-\left( t,\frac{y+z}{2} \right) \ge v_+\left( t,\frac{y+z}{2} \right) = u(t,z)
$$
as claimed.
\end{proof}

We finally recall that extinction in finite time occurs for $p\in
(p_c,2]$ and $q\in (0,p/2)$ when the initial condition $u_0$ is
compactly supported.

\begin{proposition}\label{prop.ext}
Let $p\in (p_c,2]$ and $q\in (0,p/2)$. Let $u_0$ be an initial
condition satisfying \eqref{hypIC} and denote the corresponding
solution to \eqref{eq1}-\eqref{eq2} by $u$. Assume further that
$u_0$ is compactly supported. There exists $T_e>0$ depending on $N$,
$p$, $q$, and $u_0$ such that
\begin{equation*}
\mathcal{P}(t)=\emptyset \ {\rm for} \ t\geq T_e, \qquad
\mathcal{P}(t) \neq\emptyset \ {\rm for} \
t\in[0,T_e).
\end{equation*}
\end{proposition}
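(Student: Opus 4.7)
The plan is to reduce the claim to extinction results already available in the literature, since compact support of $u_0$ trivially implies every polynomial decay bound that these results require. More precisely, for any $\theta>0$ there is $C>0$ such that $u_0(x)\le C(1+|x|)^{-\theta}$ holds on $\real^N$, simply because $u_0$ is bounded and supported in some ball. Choosing $\theta>q/(1-q)$ when $q\in(0,p-1)$ places us in the hypotheses of Theorem~\ref{th.inst}(iii), which yields a time $T^\ast>0$ at which $u(T^\ast,\cdot)\equiv 0$. For the complementary subrange $q\in[p-1,p/2)$, Theorem~\ref{th.inst} no longer applies, so I would instead cite the finite-time extinction statement of \cite[Theorem~1.2(iii)]{IL12} for $p\in(p_c,2)$ and of \cite[Corollary~9.1]{Gi05} for $p=2$; compact support guarantees whichever $L^r$-integrability or decay hypothesis these results use.

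Having obtained some $T^\ast>0$ with $u(T^\ast,\cdot)\equiv 0$, I would set
\begin{equation*}
T_e := \inf\{t>0 \ : \ u(t,\cdot)\equiv 0 \text{ in }\real^N\}\in[0,T^\ast].
\end{equation*}
Since $u_0\not\equiv 0$ and $u\in\mathcal{BC}([0,\infty)\times\real^N)$ by Proposition~\ref{prop.wp}, continuity forces $u(t,\cdot)\not\equiv 0$ on a neighborhood of $t=0$, hence $T_e>0$. By continuity again, $u(T_e,\cdot)\equiv 0$, so $\mathcal{P}(T_e)=\emptyset$.

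To propagate the vanishing beyond $T_e$, I would invoke the uniqueness part of Proposition~\ref{prop.wp}: the function $v(t,x):=u(T_e+t,x)$ is the unique viscosity solution to \eqref{eq1}-\eqref{eq2} with initial datum $u(T_e,\cdot)\equiv 0$, and since the identically zero function is itself such a solution, $u(t,\cdot)\equiv 0$ for every $t\ge T_e$, giving $\mathcal{P}(t)=\emptyset$ on $[T_e,\infty)$. Conversely, for $t\in[0,T_e)$ the definition of $T_e$ as an infimum, combined with the fact that $u(s,\cdot)\not\equiv 0$ for some $s\in(t,T_e)$, means $u(t,\cdot)\not\equiv 0$, so $\mathcal{P}(t)\neq\emptyset$.

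The only nontrivial point is checking that the hypotheses of the cited extinction results are indeed met by compactly supported data across the full range $q\in(0,p/2)$; once that bookkeeping is done, the rest of the argument is a routine use of continuity and uniqueness. I do not expect any serious obstacle, since this proposition is essentially a consolidation of previously established extinction statements together with the standard definition of the extinction time.
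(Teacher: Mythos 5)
Your proposal contains a genuine circularity. You route the subrange $q\in(0,p-1)$ through Theorem~\ref{th.inst}(iii), but the paper establishes Theorem~\ref{th.inst} (see the paragraph following Proposition~\ref{prop.loc}) precisely as a corollary of Propositions~\ref{prop.ext}, \ref{prop.inst} and~\ref{prop.loc}: finite-time extinction there is obtained by applying Proposition~\ref{prop.ext} to $u(\cdot+t_0)$ once instantaneous shrinking has made the support compact. So invoking Theorem~\ref{th.inst}(iii) to prove Proposition~\ref{prop.ext} is circular. The fix is simple and is in fact what the paper does: cite \cite[Corollary~9.1]{Gi05} for $p=2$ and \cite[Theorem~1.2(iii)]{IL12} for $p\in(p_c,2)$ across the \emph{entire} range $q\in(0,p/2)$; these external results already cover compactly supported $u_0$ (indeed any $u_0$ decaying like $|x|^{-(p-Q)/(Q-p+1)}$ as in~\eqref{interm7}), so your case split into $q<p-1$ and $q\ge p-1$ is unnecessary.

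Two smaller remarks on the remainder of your argument, which is otherwise sound. First, the paper treats Proposition~\ref{prop.ext} as a direct consequence of the two references and does not reconstruct the infimum/continuity bookkeeping; your version is more explicit but not in conflict with the paper. Second, when propagating $u(T_e,\cdot)\equiv 0$ forward you appeal to the uniqueness clause of Proposition~\ref{prop.wp}, but that statement is formulated for initial data satisfying~\eqref{hypIC}, which requires $u_0\not\equiv 0$; the cleaner route is the comparison principle, comparing $u(T_e+\cdot,\cdot)$ with the identically zero solution to conclude $u\equiv 0$ for $t\ge T_e$.
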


Proposition~\ref{prop.ext} is shown in \cite[Corollary~9.1]{Gi05} for $p=2$ and in \cite[Theorem 1.2(iii)]{IL12} for $p<2$. It is actually proved in the latter that finite time extinction takes place for a broader class of initial data, namely, if there exist $C_0>0$ and $Q>0$ such that
\begin{equation}\label{interm7}
u_0(x)\leq C_0|x|^{-(p-Q)/(Q-p+1)}, \qquad x\in\real^N,
\end{equation}
where
$$
Q=q \ {\rm if} \ q\in (q_1,p/2), \quad Q\in(q_1,p/2) \ {\rm if} \
q\in(0,q_1], \quad {\rm and} \
q_1:=\max\left\{p-1,\frac{N}{N+1}\right\}\ .
$$

\begin{remark}\label{rem.improv}
It is worth pointing out here that Theorem~\ref{th.improv} includes \cite[Theorem 1.2(iii)]{IL12} when the range of $(p,q)$ is \eqref{exp}. Indeed, if $u_0$ satisfies \eqref{interm7} then it satisfies \eqref{init.decay3}.
\end{remark}

\section{Instantaneous shrinking}\label{sec.inst}

In this section, we show that the phenomenon of instantaneous
shrinking takes place, thus proving the first assertion in
Theorem~\ref{th.inst}. Its proof will be completed in the next
section, which deals with the localization part, once the support is
known to be compact. More precisely, we show
here the following result.

\begin{proposition}\label{prop.inst}
Let $u$ be a solution to the Cauchy problem \eqref{eq1}-\eqref{eq2} with an initial condition $u_0$ satisfying \eqref{hypIC} and \eqref{init.decay} for some $C>0$, and exponents $p$, $q$ as in \eqref{exp}. There
exists $t_0>0$ such that $\mathcal{P}(t)$ is bounded for
$t\in(0,t_0)$.
\end{proposition}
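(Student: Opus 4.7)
The plan is to exhibit an explicit radial supersolution $\Sigma$ to \eqref{eq1} whose positivity set is the whole of $\real^N$ at $t=0$ but is a bounded ball for each $t\in(0,t_0)$, and then to apply the comparison principle for \eqref{eq1} (implicit in Proposition~\ref{prop.wp}). I would seek $\Sigma$ of the separated form
\[
\Sigma(t,x) := \bigl[\Phi(|x|) - h(t)\bigr]_+,
\]
where $\Phi:[0,\infty)\to (0,\infty)$ is a smooth radially non-increasing profile satisfying $\Phi(|x|)\ge u_0(x)$ for all $x\in\real^N$ and $\Phi(r)\sim C r^{-\theta}$ as $r\to\infty$ (such a profile, e.g.\ $\Phi(r)=M(1+r^2)^{-\theta/2}$ with $M$ large, exists thanks to \eqref{hypIC} and \eqref{init.decay}), and $h\in C([0,t_0])$ is non-decreasing with $h(0)=0$, to be chosen. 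For $t>0$ with $h(t)>0$ the positivity set of $\Sigma(t,\cdot)$ is precisely the open ball of radius $R(t):=\Phi^{-1}(h(t))<\infty$, so $\Sigma$ itself already realizes an instantaneous shrinking.

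In the interior of that ball, formula \eqref{eq.paroprad} gives
\[
\mathcal{L}\Sigma(t,x) = -h'(t) + F(|x|),\qquad F(r):= |\Phi'(r)|^q - \Delta_p\Phi(r).
\]
For the concrete choice above, direct computation yields $|\Phi'(r)|^q \sim c_1 r^{-(\theta+1)q}$ and $\Delta_p\Phi(r)=O(r^{-(\theta+1)(p-1)-1})$ as $r\to\infty$, and since the inequality $q<p-1$ from \eqref{exp} gives $-(\theta+1)q>-(\theta+1)(p-1)-1$, the Hamilton--Jacobi term strictly dominates far out. Near $r=0$ the positivity of $F$ is automatic when $p=2$ and is in fact reinforced by the blow-up of $-\Delta_p\Phi$ as $r\to 0$ in the singular regime $p<2$; for intermediate $r$ one ensures $F>0$ by adjusting $M$ in $\Phi$. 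At the moving boundary $r=R(t)$, the relation $\Phi(R(t))=h(t)$ combined with the asymptotics of $\Phi$ gives a lower bound of the form
\[
F(R(t)) \ge c_\ast\, h(t)^{(\theta+1)q/\theta}.
\]

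The decay hypothesis $\theta>q/(1-q)$ is exactly equivalent to $(\theta+1)q/\theta<1$, which is the decisive \emph{sublinearity} allowing to solve the differential inequality $h'(t)\le c_\ast h(t)^{(\theta+1)q/\theta}$ from the initial value $h(0)=0$ by an explicit self-similar function $h(t)=K t^{\theta/(\theta-q(\theta+1))}$; adjusting $K$ small enough so that this inequality propagates from $r=R(t)$ to every $r\in[0,R(t)]$ yields $\mathcal{L}\Sigma\ge 0$ on $\{\Sigma>0\}$, hence that $\Sigma$ is a weak/viscosity supersolution of \eqref{eq1} on $(0,t_0)\times\real^N$. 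Since $\Sigma(0,\cdot)=\Phi(|\cdot|)\ge u_0$, the comparison principle produces $u\le \Sigma$, from which $\mathcal{P}(t)\subseteq B(0,R(t))$ is bounded for $t\in(0,t_0)$. The main obstacle I anticipate is securing $F\ge h'$ \emph{uniformly} on $[0,R(t)]$, not only at the free boundary: the far-field asymptotics and the sharp case $r=R(t)$ are forced by \eqref{init.decay}, but the intermediate range requires a quantitative choice of $\Phi$, together with a careful treatment of the singular $p$-Laplacian behavior near $r=0$ when $p<2$.
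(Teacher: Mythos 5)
Your overall plan---compare $u$ with a radially symmetric supersolution that instantly develops compact support, driven by the sublinearity condition $\theta>q/(1-q)$ which is indeed equivalent to $(\theta+1)q/\theta<1$---is the right intuition and matches the spirit of the paper's proof. However, two concrete obstacles make the proposal as written incomplete, and they are precisely the points where the paper's construction departs from yours.

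\textbf{The kink at the free boundary.} Your ansatz $\Sigma(t,x)=[\Phi(|x|)-h(t)]_+$ has a \emph{convex} corner on the sphere $\{|x|=R(t)\}$: the radial derivative jumps up from $\Phi'(R(t))<0$ to $0$. For a degenerate elliptic operator such a corner is fatal for the supersolution property. In the weak formulation, integrating $|\nabla\Sigma|^{p-2}\nabla\Sigma\cdot\nabla\psi$ by parts over $\{\Sigma>0\}$ produces the boundary term $-|\Phi'(R)|^{p-1}\int_{\partial B(0,R)}\psi\le 0$, which has the wrong sign. In the viscosity formulation, for any slope $s$ strictly between $\Phi'(R)$ and $0$ the second-order subjet at the corner contains $(s,m)$ with $m$ arbitrarily large positive, and then $-\Delta_p\varphi=-(p-1)|s|^{p-2}m+\cdots\to-\infty$, so $\mathcal{L}\varphi\ge 0$ fails. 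The paper avoids this entirely by taking the positive part to the power $\gamma=(p-q)/(p-q-1)>1$ in \eqref{sigm}; then $\nabla\Sigma$ vanishes at the free boundary, $\Sigma$ is $C^1$ across it, and one can check that $|\partial_r\Sigma|^{p-2}\partial_r^2\Sigma\to 0$ there as well, so no distributional mass appears. Without some such regularizing device (a power $\gamma>1$, or a different profile with built-in degeneracy) the comparison principle cannot be invoked.

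\textbf{The interior bound $F\ge h'$.} You correctly identify this as the main obstacle, but the suggested remedies do not close it. You need $F(r)\ge h'(t)$ for \emph{all} $r\in(0,R(t))$, not only at the moving boundary, and $F$ has no reason to be monotone between $r=0$ and $r=R(t)$. Increasing $M$ does not help: with $\Phi(r)=M(1+r^2)^{-\theta/2}$, the gradient absorption term scales like $M^q$ while $\Delta_p\Phi$ scales like $M^{p-1}$, and since $q<p-1$ the diffusion term \emph{grows faster} with $M$; meanwhile $M$ cannot be made small because $\Phi\ge u_0$ must hold. Moreover, as $t\to 0^+$ the radius $R(t)\to\infty$, so the relevant infimum of $F$ on $(0,R(t))$ is not controlled by $F(R(t))$ alone. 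The paper sidesteps the whole issue: it verifies the supersolution inequality only on the exterior region $(0,t_0)\times(\real^N\setminus B(0,R))$ for a fixed large $R$ (see \eqref{zz6}--\eqref{cond1}), and handles the interior by a boundary comparison at $|x|=R$ via \eqref{comp2} and \eqref{cond3}. This localization, together with the freedom to use a possibly smaller decay exponent $\theta'\in(q/(1-q),\gamma\alpha_2)$ in place of $\theta$ (so that $\alpha:=\theta'/\gamma$ lies in the required window $(\alpha_1,\alpha_2)$), is what makes the ODE $\eta'=c\,\eta^\beta$ with $\beta\in(0,1)$ solvable from $\eta(0)=0$ \emph{and} sufficient to dominate the diffusion uniformly in the region where the comparison is made. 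Your ODE for $h$ has the correct exponent, but without the restriction to an exterior annulus and without the power $\gamma$ the pointwise inequality cannot be propagated to the full positivity set.

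In short: the scaling heuristics are on target, but the ansatz $[\Phi-h]_+$ with exponent $1$ and a global comparison cannot be made rigorous; you would need to reintroduce the power $\gamma>1$ and the exterior-domain localization, at which point you essentially reproduce the paper's argument.
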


\begin{proof}
We look for a supersolution to Eq.~\eqref{eq1} of the form
\begin{equation}\label{sigm}
\Sigma(t,x)=\left[\frac{A}{1+|x|^{\a}}-\eta(t)\right]_{+}^{\gamma},
\qquad (t,x)\in(0,\infty)\times\real^N,
\end{equation}
where $z_+:=\max\{z,0\}$ denotes the positive part of the real number $z$, the positive parameters $A>0$, $\a\in (0,1)$, $\gamma>1$, and the function $\eta$ being to be determined. For further use, we set
$$
r:=|x|, \quad y:=\frac{A}{1+r^{\a}}-\eta(t).
$$
Owing to the radial symmetry of $\Sigma$, it follows from \eqref{eq.paroprad} that
$$
\mathcal{L}\Sigma=\partial_{t}\Sigma-(p-1)|\partial_{r}\Sigma|^{p-2}\partial_{r}^2\Sigma-\frac{N-1}{r}|\partial_{r}\Sigma|^{p-2}\partial_{r}\Sigma+|\partial_{r}\Sigma|^{q}.
$$
We further require that $\eta(0)=0$ and that $\eta$ is
non-decreasing, that is, $\eta'\geq0$. In the previous notation, we
notice that
$$
\partial_t\Sigma(t,x)=-\gamma y_+^{\gamma-1}\eta'(t),
$$
$$
\partial_r\Sigma(t,x)=-A\a\gamma
y_+^{\gamma-1}\frac{r^{\a-1}}{(1+r^{\a})^2},
$$
and
\begin{equation*}
\begin{split}
\partial_{r}^2\Sigma(t,x) &=A^2\a^2\gamma(\gamma-1)y_+^{\gamma-2}\frac{r^{2\a-2}}{(1+r^{\a})^4}\\
&-A\a\gamma
y_+^{\gamma-1}\frac{r^{\a-2}}{(1+r^{\a})^3}\left[\a-1-(\a+1)r^{\a}\right],
\end{split}
\end{equation*}
whence, we get
\begin{equation*}
\begin{split}
\mathcal{L}\Sigma&=-\gamma
y_+^{\gamma-1}\eta'(t)+(A\a\gamma)^qy_+^{q(\gamma-1)}\frac{r^{q(\a-1)}}{(1+r^{\a})^{2q}}-(A\a\gamma)^{p-2}y_+^{(\gamma-1)(p-2)}\frac{r^{(\a-1)(p-2)}}{(1+r^{\a})^{2(p-2)}}\\
&\times\left[(p-1)A^2\a^2\gamma(\gamma-1)y_+^{\gamma-2}\frac{r^{2\a-2}}{(1+r^{\a})^4}-(p-1)A\a\gamma
y_+^{\gamma-1}\frac{r^{\a-2}}{(1+r^{\a})^3}\left(\a-1-(\a+1)r^{\a}\right)\right.\\
&\left.-A\a\gamma(N-1)y_+^{\gamma-1}\frac{r^{\a-2}}{(1+r^{\a})^2}\right]\\
&=(A\a\gamma)^qy_+^{q(\gamma-1)}\frac{r^{q(\a-1)}}{(1+r^{\a})^{2q}}-\gamma
y_+^{\gamma-1}\eta'(t)+(A\a\gamma)^{p-1}y_+^{(\gamma-1)(p-1)-1}\frac{r^{(\a-1)(p-1)-1}}{(1+r^{\a})^{2(p-1)}}\\
&\times\left[(N-1)y_++(p-1)y_+\frac{2\a-(1+\a)(1+r^{\a})}{1+r^{\a}}-A\a(\gamma-1)(p-1)\frac{r^{\a}}{(1+r^{\a})^2}\right]\\
&\geq(A\a\gamma)^qy_+^{q(\gamma-1)}\frac{r^{q(\a-1)}}{(1+r^{\a})^{2q}}-\gamma
y_+^{\gamma-1}\eta'(t)+(A\a\gamma)^{p-1}y_+^{(\gamma-1)(p-1)-1}\frac{r^{(\a-1)(p-1)-1}}{(1+r^{\a})^{2(p-1)}}\\
&\times\left[-(1+\a)(p-1)y_+-\frac{A\a(\gamma-1)(p-1)}{1+r^{\a}}\right],
\end{split}
\end{equation*}
or, equivalently,
\begin{equation}\label{interm1}
\begin{split}
\mathcal{L}\Sigma&\geq(A\a\gamma)^qy_+^{q(\gamma-1)}\frac{r^{q(\a-1)}}{(1+r^{\a})^{2q}}-\gamma
y_+^{\gamma-1}\eta'(t)\\&-(A\a\gamma)^{p-1}A\a(\gamma-1)(p-1)y_+^{(\gamma-1)(p-1)-1}\frac{r^{(\a-1)(p-1)-1}}{(1+r^{\a})^{2(p-1)+1}}\\
&-(1+\a)(p-1)(A\a\gamma)^{p-1}y_+^{(\gamma-1)(p-1)}\frac{r^{(\a-1)(p-1)-1}}{(1+r^{\a})^{2(p-1)}}.
\end{split}
\end{equation}
We now choose
$$
\gamma=\frac{p-q}{p-q-1}>1,
$$
so that
$$
\gamma-1=\frac{1}{p-1-q} \qquad {\rm and} \qquad
(\gamma-1)(p-1)-1=q(\gamma-1).
$$
Since $\gamma>1$ and $\eta\ge 0$, we notice that
\begin{equation}\label{interm2}
y_+\leq\frac{A}{1+r^{\a}}, \qquad {\rm and} \qquad
y_+^{\gamma-1}=y_+^{q(\gamma-1)}y_+^{(1-q)(\gamma-1)}\leq
y_+^{q(\gamma-1)}\frac{A^{(1-q)(\gamma-1)}}{(1+r^{\a})^{(1-q)(\gamma-1)}}.
\end{equation}
Consequently, owing to the fact that $\eta'(t)\geq0$ for all $t>0$
and plugging \eqref{interm2} into \eqref{interm1}, we deduce
\begin{equation}\label{zz6}
\begin{split}
\mathcal{L}\Sigma&\geq
y_+^{q(\gamma-1)}\left[(A\a\gamma)^q\frac{r^{q(\a-1)}}{(1+r^{\a})^{2q}}-\frac{A^{(1-q)(\gamma-1)}\gamma}{(1+r^{\a})^{(1-q)(\gamma-1)}}\eta'(t)\right.\\
&\left.-(p-1)(\a\gamma)^{p-1}A^p(1+\a\gamma)\frac{r^{(\a-1)(p-1)-1}}{(1+r^{\a})^{2(p-1)+1}}\right]\\
&\geq
y_+^{q(\gamma-1)}\left[\frac{(A\a\gamma)^q}{2}\frac{r^{q(\a-1)}}{(1+r^{\a})^{2q}}-\frac{A^{(1-q)(\gamma-1)}\gamma}{(1+r^{\a})^{(1-q)(\gamma-1)}}\eta'(t)\right.\\
&\left.+\frac{(A\a\gamma)^q}{2}\frac{r^{q(\a-1)}}{(1+r^{\a})^{2q}}-(p-1)(\a\gamma)^{p-1}A^p(1+\a\gamma)\frac{r^{(\a-1)(p-1)-1}}{(1+r^{\a})^{2(p-1)+1}}\right]\\
& =
y_+^{q(\gamma-1)}\frac{A^{(1-q)(\gamma-1)}\gamma}{(1+r^{\a})^{(1-q)(\gamma-1)}} S_1 +y_+^{q(\gamma-1)}\frac{(A\a\gamma)^q}{2}\frac{r^{(\a-1)(p-1)-1}}{(1+r^{\a})^{2(p-1)+1}} S_2\ ,
\end{split}
\end{equation}
where
$$
S_1 := \frac{(\a\gamma)^q}{2\gamma}\frac{A^{(q-1)\gamma+1}r^{q(\a-1)}}{(1+r^{\a})^{q(\gamma+1)+1-\gamma}}-\eta'(t)
$$
and
$$
S_2 := \frac{r^{1+(\a-1)(q-p+1)}}{(1+r^{\a})^{2(q-p+1)-1}}
-2(p-1)(\a\gamma)^{p-1-q}A^{p-q}(1+\a\gamma)
$$
Our goal is now to show that $\Sigma$ is a supersolution to the Cauchy problem \eqref{eq1}-\eqref{eq2} in $(0,t_0)\times \left( \real^N\setminus B(0,R) \right)$ for some $t_0>0$ small enough and $R>1$ sufficiently large. To this end, we estimate separately $S_1$ and $S_2$. On the one hand, since $2(p-1-q)>0$ by \eqref{exp},
\begin{equation}\label{interm3}
\begin{split}
S_2&=r^{1+(\a-1)(q-p+1)}(1+r^{\a})^{1+2(p-1-q)}-2(p-1)(\a\gamma)^{p-1-q}A^{p-q}(1+\a\gamma)\\
&\geq r^{(\a+1)(p-q)}-2(1+\a\gamma)(p-1)(\a\gamma)^{p-1-q}A^{p-q}\\
&\geq R^{(\a+1)(p-q)}-2(1+\a\gamma)(p-1)(\a\gamma)^{p-1-q}A^{p-q},
\end{split}
\end{equation}
provided $r>R$. On the other hand, for $r>R$ such that $y>0$,
$$
\left( \frac{r^{\a}}{1+r^{\a}} \right)^{q(\a-1)/\a} \geq1 \quad\text{ and }\quad \frac{1}{1+r^\a} \geq \frac{\eta(t)}{A},
$$
since $\a\in (0,1)$, hence
\begin{equation*}
\begin{split}
S_1&\geq\frac{(\a\gamma)^q}{2\gamma}A^{(q-1)\gamma+1} \left(\frac{1}{1+r^{\a}}\right)^{(\a(1+q\gamma-\gamma)+q)/\a}-\eta'(t)\\
&\geq\frac{(\a\gamma)^q}{2\gamma}A^{(q-1)\gamma+1}\left(\frac{\eta(t)}{A}\right)^{(\a(1+q\gamma-\gamma)+q)/\a}-\eta'(t),
\end{split}
\end{equation*}
provided that $\a(1+q\gamma-\gamma)+q>0$. Since $q+1<p \le 2$,
$$
1+q\gamma-\gamma = \frac{q(p-q)-1}{p-q-1} \le \frac{q(2-q)-1}{p-q-1} = - \frac{(1-q)^2}{p-q-1} < 0\ ,
$$
and the previous condition on $\alpha$ reads
\begin{equation*}
\alpha < \alpha_2 := \min \left\{ \frac{q}{\gamma(1-q)-1},1 \right\} \ .
\end{equation*}
The lower bound on $S_1$ then becomes
\begin{equation}\label{interm4}
S_1\geq\frac{(\a\gamma)^q}{2\gamma}A^{-q/\a}\eta(t)^{(\a(1+q\gamma-\gamma)+q)/\a}-\eta'(t).
\end{equation}
We thus choose $\eta$ as the solution to the differential
equation
\begin{equation}\label{interm5}
\eta'(t)=\frac{(\a\gamma)^q}{2\gamma}A^{-q/\a}\eta(t)^{(\a(1+q\gamma-\gamma)+q)/\a},
\qquad \eta(0)=0,
\end{equation}
which is possible when $(\a(1+q\gamma-\gamma)+q)/\a<1$. Taking into account the precise value of $\gamma$, this condition reads
$$
\a>\a_1:=\frac{p-1-q}{p-q}\frac{q}{1-q}=\frac{q}{\gamma(1-q)}.
$$
It is easy to check that $\a_1<\a_2$, so that, any $\a\in(\a_1,\a_2)$ satisfies the conditions we assumed up to now. Since $S_1\ge 0$ by \eqref{interm4} and \eqref{interm5}, we infer from \eqref{zz6} and \eqref{interm3} that $\Sigma$ is a supersolution to \eqref{eq1} in $(0,\infty)\times\real^N\setminus B(0,R)$, provided that
\begin{equation}\label{cond1}
R^{(\a+1)(p-q)}\geq2(1+\a\gamma)(p-1)(\a\gamma)^{p-1-q}A^{p-q} \ .
\end{equation}
It remains to check that $\Sigma$ is a supersolution also for the initial and boundary conditions, that is
\begin{equation}\label{comp1}
\Sigma(0,x)=\left(\frac{A}{1+r^{\a}}\right)^{\gamma}\geq u_0(x)
\quad {\rm for \ any} \ x\in\real^N\setminus B(0,R),
\end{equation}
and
\begin{equation}\label{comp2}
\Sigma(t,x)\geq u(t,x), \quad {\rm for \ any} \ t\in(0,t_0), \
|x|=R.
\end{equation}
In order to check \eqref{comp1}, we first readily notice that in the range given by \eqref{exp}
$$
\gamma=\frac{p-q}{p-1-q}>\frac{q}{1-q},
$$
so that, owing to \eqref{init.decay}, there exists $\theta'>0$ such that
$$
u_0(x)\leq C(1+|x|)^{-\theta'}, \quad
\frac{q}{1-q} =\gamma \alpha_1 <\theta'< \gamma \alpha_2 = \min \left\{ \frac{\gamma q}{\gamma(1-q)-1}, \gamma \right\}\ .
$$
Indeed, if the initial decay exponent $\theta$ satisfies $\theta<\gamma\a_2$, then we take $\theta'=\theta$, while, if $\theta \ge \gamma\a_2$, we anyway have
$$
u_0(x)\leq C(1+|x|)^{-\theta}\leq C(1+|x|)^{-\theta'}, \qquad
x\in\real^N,
$$
for any $\theta'\in(q/(1-q),\gamma\a_2)$. Then, we define
\begin{equation}\label{interm6}
\a:=\frac{\theta'}{\gamma}\in (\a_1,\a_2),
\end{equation}
and we have
\begin{equation*}
\begin{split}
u_0(x)&\leq
C(1+|x|)^{-\theta'}=\frac{C}{(1+r)^{\a\gamma}}\leq\frac{C}{2^{\gamma(\a-1)}(1+r^{\a})^{\gamma}}\\
&=\left(\frac{C^{1/\gamma}}{2^{\a-1}}\frac{1}{1+r^{\a}}\right)^{\gamma},
\end{split}
\end{equation*}
after using the elementary inequality
$$
(1+r)^{\a}\geq2^{\a-1}(1+r^{\a}).
$$
We thus derive the inequality in \eqref{comp1} by requiring that
\begin{equation}\label{cond2}
A\geq\frac{C^{1/\gamma}}{2^{\a-1}}.
\end{equation}
In order to establish \eqref{comp2}, we further prescribe the following condition:
\begin{equation}\label{cond3}
A>(1+R^{\a})\|u_0\|_{\infty}^{1/\gamma},
\end{equation}
with $\a$ already chosen in \eqref{interm6}. By a direct integration
of the differential equation \eqref{interm5}, we obtain
$$
\eta(t)=\left[\frac{(\a\gamma)^q(1-\b)}{2\gamma}A^{-q/\a}\right]^{1/(1-\b)}t^{1/(1-\b)},
\qquad \b:=\frac{\a(1+q\gamma-\gamma)+q}{\a}\in(0,1),
$$
for $t\ge 0$. Taking into account that $\eta(t)\to 0$ as $t\to 0$, there exists $t_0>0$ sufficiently small such that, for $t\in [0,t_0]$ and $x\in\real^N$, $|x|=R$,
$$
\left( \frac{A}{1+R^\a} - \eta(t) \right)^\gamma \ge \left(\frac{A}{1+R^{\a}}-\eta(t_0)\right)^{\gamma}>\|u_0\|_{\infty} \ge u(t,x),
$$
which implies \eqref{comp2}. It only remains to show that the
conditions \eqref{cond1}, \eqref{cond2} and \eqref{cond3} are
compatible. To this end, we first choose $R>0$ sufficiently large
such that it satisfies simultaneously the following estimates:
\begin{equation}\label{choice1}
\begin{split}
R^{\a} & >\frac{C^{1/\gamma}}{2^{\a-1}\|u_0\|_{\infty}^{1/\gamma}}-1, \\
R^{\a+1} &>2\left[2(p-1)(1+\a\gamma)(\a\gamma)^{p-1-q}\right]^{1/(p-q)}(1+R^{\a})\|u_0\|_{\infty}^{1/\gamma},
\end{split}
\end{equation}
then choose $A>0$ such that
\begin{equation}\label{choice2}
(1+R^{\a})\|u_0\|_{\infty}^{1/\gamma}<A<2(1+R^{\a})\|u_0\|_{\infty}^{1/\gamma}.
\end{equation}
It is immediate to check that these choices of $R$ and $A$ satisfy
\eqref{cond1}, \eqref{cond2}, and \eqref{cond3}. Thus, letting
$\gamma=(p-q)/(p-q-1)$, $\a$ as in \eqref{interm6}, $R$ as in
\eqref{choice1}, and $A$ as in \eqref{choice2}, the function $\Sigma$
introduced in \eqref{sigm} is a supersolution to the Cauchy problem
\eqref{eq1}-\eqref{eq2} in $(0,t_0)\times\real^N\setminus B(0,R)$.
By the comparison principle, we get
$$
u(t,x)\leq\Sigma(t,x), \qquad t\in (0,t_0), \ |x|\geq R.
$$
Consequently, for $t\in (0,t_0]$, we infer from the definition \eqref{sigm} of $\Sigma$ that $\Sigma(t,x)=0$ for $|x|^{\a}>A/\eta(t)-1$, so that $\mathcal{P}(t)$ is bounded.
\end{proof}

\section{Propagation of the support. Localization}\label{sec.loc}

In this section, we complete the proof of Theorem~\ref{th.inst} by showing the second statement, observing that it readily implies the third one by Proposition~\ref{prop.ext}. To this end we actually establish that, if $u_0$ vanishes sufficiently rapidly at $x_0\in\real^N$, then $u(t,x_0)=0$ for all $t\ge 0$.

\begin{proposition}\label{prop.locvan}
Let $u_0$ be an initial condition satisfying \eqref{hypIC} and
denote the corresponding solution to \eqref{eq1}-\eqref{eq2} by $u$.
Assume further that $(p,q)$ satisfies \eqref{exp} and that there is $x_0\in\real^N$ such that \eqref{locvan} holds true, with constants $\kappa_{p,q}$ and $\omega$ given in \eqref{const}. Then $u(t,x_0)=0$ for all $t\ge 0$.
\end{proposition}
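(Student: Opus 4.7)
The plan is to compare $u$ with a time-independent supersolution $\Sigma$ of the form
$$
\Sigma(x) := \kappa_{p,q}\,|x-x_0|^{\omega}, \qquad x \in \real^N,
$$
with $\omega$ and $\kappa_{p,q}$ exactly as in \eqref{const}. Hypothesis \eqref{locvan} is precisely the statement that $u_0 \le \Sigma$ on $\real^N$, and comparison will then yield $u(t,x_0) \le \Sigma(x_0) = 0$.

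First, I would check that $\Sigma$ is a classical stationary (super)solution to the elliptic equation $-\Delta_p v + |\nabla v|^q = 0$ on $\real^N$. Writing $r := |x-x_0|$ and using the radial formula \eqref{eq.paroprad}, one computes for $x \neq x_0$
$$
|\nabla \Sigma|^q = (\kappa_{p,q}\omega)^q\, r^{q(\omega-1)}\,, \qquad \Delta_p \Sigma = (\kappa_{p,q}\omega)^{p-1}\bigl[(p-1)(\omega-1)+N-1\bigr]\, r^{(p-1)(\omega-1)-1}.
$$
The choice $\omega = (p-q)/(p-1-q)$ in \eqref{const} is forced by matching exponents: the identity $(p-1)(\omega-1)-1 = q(\omega-1)$ reduces to $(\omega-1)(p-1-q)=1$. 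With this $\omega$, the two coefficients balance exactly when $(\kappa_{p,q}\omega)^{p-1-q}\bigl[(p-1)(\omega-1)+N-1\bigr]=1$, which is precisely the definition of $\kappa_{p,q}$ in \eqref{const}. Hence $\mathcal{L}\Sigma \equiv 0$ on $\real^N\setminus\{x_0\}$. Moreover, the range \eqref{exp} gives $q(\omega-1)=q/(p-1-q)>0$, so both $|\nabla\Sigma|^q$ and $\Delta_p\Sigma$ extend continuously to $0$ at $x_0$; together with $\omega>1$ this makes $\Sigma$ a classical (hence weak and viscosity) stationary supersolution on the whole of $\real^N$.

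Next, with $u_0 \le \Sigma$ on $\real^N$ from \eqref{locvan}, I would apply the comparison principle to conclude $u(t,x) \le \Sigma(x)$ on $(0,\infty)\times\real^N$. The one delicate point is that $\Sigma$ is unbounded, so rather than invoking a comparison statement on the whole space I would localize to the bounded region where the inequality is informative. Setting $R := (\|u_0\|_\infty/\kappa_{p,q})^{1/\omega}$, the upper bound \eqref{zz4} ensures $\Sigma(x) \ge \|u_0\|_\infty \ge u(t,x)$ for $|x-x_0|\ge R$, so the interesting region is the ball $B(x_0,R)$. There $u$ and $\Sigma$ are bounded, $\Sigma$ is a classical stationary supersolution, $u_0 \le \Sigma$ at $t=0$, and $\Sigma \ge \|u_0\|_\infty \ge u$ on the lateral boundary; the standard comparison principle for \eqref{eq1} (which already underlies Proposition~\ref{prop.wp} and Lemma~\ref{lem.radial}) then yields $u \le \Sigma$ on $(0,\infty)\times B(x_0,R)$. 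Evaluating at $x=x_0$ gives $0 \le u(t,x_0) \le \Sigma(x_0)=0$ for all $t \ge 0$.

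The main technical content is the algebraic verification in the first step that the exact constants $\omega$ and $\kappa_{p,q}$ of \eqref{const} make $\Sigma$ a stationary solution; this produces an equality (not just an inequality), which is what makes the value of $\kappa_{p,q}$ in \eqref{locvan} sharp. The only conceptual care needed is in the comparison step, where the unbounded tail of $\Sigma$ is handled by restricting to $B(x_0,R)$ where $\Sigma \le \|u_0\|_\infty$, an issue with no real difficulty thanks to \eqref{zz4}.
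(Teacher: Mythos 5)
Your proof matches the paper's argument exactly: translate to $x_0=0$, verify via the two algebraic identities in \eqref{const} that $\Sigma(x)=\kappa_{p,q}|x-x_0|^\omega$ is a stationary solution of \eqref{eq1}, and invoke comparison together with \eqref{locvan} to get $u(t,x_0)\le\Sigma(x_0)=0$. The only difference is that the paper applies the comparison principle directly on $\real^N$ without comment, whereas you localize to $B(x_0,R)$ using \eqref{zz4} to handle the unbounded tail of $\Sigma$; this is a sensible clarification but does not change the substance of the argument.
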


\begin{proof}
Owing to the invariance by translation of \eqref{eq1} we may assume without loss of generality that $x_0=0$. Setting $\Sigma(x) := \kappa_{p,q} r^\omega$ and $r:=|x|$ for $x\in\real^N$, we infer from the radial symmetry of $\Sigma$ and \eqref{eq.paroprad} that
\begin{equation*}
\mathcal{L}\Sigma(x) = (\omega\kappa_{p,q})^q r^{q(\omega-1)} - (\omega\kappa_{p,q})^{p-1} \left[ (p-1)(\omega-1) + N-1 \right] r^{(\omega-1)(p-1)-1} \ .
\end{equation*}
Since
$$
q(\omega-1) = \frac{q}{p-1-q} = (\omega-1)(p-1)-1 \quad\text{ and
}\quad (p-1)(\omega-1) + N-1 = (\omega\kappa_{p,q})^{q+1-p}\ ,
$$
we end up with
$$
\mathcal{L}\Sigma(x) = (\omega\kappa_{p,q})^q r^{q(\omega-1)} \left[ 1 - (\omega\kappa_{p,q})^{p-1-q} (\omega\kappa_{p,q})^{q+1-p} \right]= 0\ .
$$
Consequently $\Sigma$ is a solution to \eqref{eq1} and we infer from \eqref{locvan} and the comparison principle that $u(t,x)\le \Sigma(x)$ for $(t,x)\in [0,\infty)\times\real^N$. In particular, $u(t,0)\le\Sigma(0)=0$  for $t\ge 0$ as claimed
\end{proof}

The localization property is then a straightforward consequence of Proposition~\ref{prop.locvan}.

\begin{proposition}\label{prop.loc}
Let $u_0$ be an initial condition satisfying \eqref{hypIC} and denote the corresponding solution to \eqref{eq1}-\eqref{eq2} by $u$. Assume further that $(p,q)$ satisfies \eqref{exp} and that $u_0$ is compactly supported. There exists $R>0$ such that
$$
\mathcal{P}(t)\subseteq B(0,R), \qquad {\rm for \ any} \
t\ge 0\ .
$$
\end{proposition}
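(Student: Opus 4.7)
The plan is to deduce Proposition~\ref{prop.loc} as a direct consequence of the pointwise vanishing result Proposition~\ref{prop.locvan}, applied at every sufficiently distant point $x_0$. Since $u_0$ is compactly supported, there is $R_0>0$ such that $\mathrm{supp}\, u_0\subseteq \overline{B(0,R_0)}$. I would then look for $R>R_0$ large enough so that the hypothesis \eqref{locvan} of Proposition~\ref{prop.locvan} is satisfied at every $x_0\in\real^N$ with $|x_0|\ge R$. Once this is established, Proposition~\ref{prop.locvan} yields $u(t,x_0)=0$ for all $t\ge 0$ and for all such $x_0$, which is exactly $\mathcal{P}(t)\subseteq B(0,R)$.

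The verification of \eqref{locvan} at a fixed $x_0$ with $|x_0|$ large splits into two cases according to whether $x$ lies inside or outside $\overline{B(0,R_0)}$. If $x\notin \overline{B(0,R_0)}$, then $u_0(x)=0$, so the inequality $u_0(x)\le \kappa_{p,q}|x-x_0|^\omega$ holds trivially. If $x\in \overline{B(0,R_0)}$, then by the triangle inequality $|x-x_0|\ge |x_0|-R_0$, and it suffices to ensure
\begin{equation*}
\|u_0\|_\infty \le \kappa_{p,q}\bigl(|x_0|-R_0\bigr)^\omega.
\end{equation*}
This dictates the choice
\begin{equation*}
R := R_0 + \left(\frac{\|u_0\|_\infty}{\kappa_{p,q}}\right)^{1/\omega},
\end{equation*}
which is well defined and finite thanks to \eqref{hypIC} and the compact support of $u_0$.

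With this $R$, every $x_0$ satisfying $|x_0|\ge R$ fulfills \eqref{locvan}, so Proposition~\ref{prop.locvan} gives $u(t,x_0)=0$ for all $t\ge 0$. This proves $\mathcal{P}(t)\subseteq B(0,R)$ for every $t\ge 0$, which is the claim. There is no real technical obstacle here: the only work is the elementary geometric lower bound on $|x-x_0|$ and choosing $R$ to absorb $\|u_0\|_\infty$ through the exponent $\omega$. The nontrivial content of the proposition is entirely contained in the subsolution/comparison argument already carried out in Proposition~\ref{prop.locvan}.
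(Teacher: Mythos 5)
Your proof is correct and follows essentially the same strategy as the paper: reduce to Proposition~\ref{prop.locvan} by verifying \eqref{locvan} at every $x_0$ with $|x_0|\ge R$, with $R:=R_0+(\|u_0\|_\infty/\kappa_{p,q})^{1/\omega}$ chosen so that the triangle inequality absorbs $\|u_0\|_\infty$. The only cosmetic difference is the case split: you split on whether $x$ lies in $\overline{B(0,R_0)}$, whereas the paper splits on whether $x$ lies in $B(x_0,R-R_0)$; the two partitions lead to the same two inequalities, so the arguments are equivalent.
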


\begin{proof}
Let $R_0>0$ be such that $\mathcal{P}(0)\subseteq B(0,R_0)$ and set
$R:=R_0 + \left( \|u_0\|_\infty \kappa_{p,q}^{-1}
\right)^{1/\omega}$ with $\kappa_{p,q}$ and $\omega$ defined in \eqref{const}. Consider $x_0\in \real^N$ such that
$|x_0| \ge R$. On the one hand, for $x\in
B(x_0,R-R_0)$,
$$
|x| \ge |x_0|-|x-x_0| \ge R-(R-R_0)=R_0
$$
and $u_0(x) = 0 \le \kappa_{p,q} |x-x_0|^\omega$. On the other hand, for $x\not\in B(x_0,R-R_0)$,
$$
u_0(x) \le \|u_0\|_\infty = \kappa_{p,q}
(R-R_0)^\omega \le \kappa_{p,q} |x-x_0|^\omega\ .
$$
We are then in a position to apply Proposition~\ref{prop.locvan} and conclude that $u(t,x_0)=0$ for all $t\ge 0$. Since $x_0$ is arbitrary in $\real^N\setminus B(0,R)$ we have shown that $\mathcal{P}(t)\subseteq B(0,R)$ for all $t\ge 0$ and the proof is complete.
\end{proof}

Theorem~\ref{th.inst} is now an immediate corollary of
Propositions~\ref{prop.ext}, \ref{prop.inst} and~\ref{prop.loc}.
Indeed, there is $t_0>0$ such that $\mathcal{P}(t)$ is bounded for
$t\in(0,t_0]$ by Proposition~\ref{prop.inst}.
Proposition~\ref{prop.ext} applied to $u(\cdot + t_0)$ then implies
the finite time extinction of $u$ while Proposition~\ref{prop.loc}
guarantees the localization property for $t\ge\tau$ and $\tau>0$.

\bigskip

Another consequence of Proposition~\ref{prop.locvan} is the
\emph{infinite waiting time} phenomenon, that is, the fact that the
positivity set might not expand with time.

\begin{proposition}\label{prop.iwt}
Let $u_0$ be an initial condition satisfying \eqref{hypIC} and denote the corresponding solution to \eqref{eq1}-\eqref{eq2} by $u$. Assume further that $(p,q)$ satisfies \eqref{exp} and that $u_0$ is compactly supported in $B(0,R_0)$ for some $R_0>0$ and satisfies \eqref{locvan} for all $x_0\in\partial B(0,R_0)$. Then
$$
\mathcal{P}(t) \subseteq \mathcal{P}(0)\ , \qquad t\ge 0\ .
$$
\end{proposition}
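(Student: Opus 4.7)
The plan is to reduce the statement directly to Proposition~\ref{prop.locvan} by using the convexity of the ball $\overline{B(0,R_0)}$ to manufacture, for every point $x_0$ outside the support of $u_0$, a quantitative vanishing bound of the form required in \eqref{locvan}. Once such a bound holds, Proposition~\ref{prop.locvan} instantly yields $u(t,x_0)=0$ for all $t\ge0$, and varying $x_0$ produces the containment $\mathcal{P}(t)\subseteq\mathcal{P}(0)$.

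More precisely, I would proceed as follows. Fix $x_0\in\real^N$ with $|x_0|\ge R_0$. If $|x_0|=R_0$ there is nothing to do, since the hypothesis \eqref{locvan} at $x_0$ is precisely what Proposition~\ref{prop.locvan} requires. Otherwise, let $y_0:=R_0 x_0/|x_0|\in\partial B(0,R_0)$ denote the metric projection of $x_0$ onto the closed ball $\overline{B(0,R_0)}$. The variational characterization of this projection (equivalently, the convexity of the ball) gives $\langle x-y_0,x_0-y_0\rangle\le 0$ for every $x\in\overline{B(0,R_0)}$, so that expanding
\begin{equation*}
|x-x_0|^2 = |x-y_0|^2 + |y_0-x_0|^2 - 2\langle x-y_0,x_0-y_0\rangle \ge |x-y_0|^2
\end{equation*}
gives $|x-y_0|\le|x-x_0|$ on $\overline{B(0,R_0)}$.

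Now I can assemble the pointwise bound $u_0(x)\le\kappa_{p,q}|x-x_0|^\omega$ valid on all of $\real^N$: for $x\in\overline{B(0,R_0)}$, the hypothesis \eqref{locvan} applied at the boundary point $y_0$, combined with the distance inequality just established, gives $u_0(x)\le\kappa_{p,q}|x-y_0|^\omega\le\kappa_{p,q}|x-x_0|^\omega$; for $x\not\in\overline{B(0,R_0)}$, the bound is trivial because $u_0(x)=0$ by the support assumption. Proposition~\ref{prop.locvan} then yields $u(t,x_0)=0$ for all $t\ge 0$. Since $x_0$ was arbitrary outside $B(0,R_0)$, this shows $\mathcal{P}(t)\subseteq B(0,R_0)$, which combined with the fact that $\mathcal{P}(0)$ coincides (up to its boundary) with $B(0,R_0)$ under the standing assumptions yields the claimed inclusion $\mathcal{P}(t)\subseteq\mathcal{P}(0)$.

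There is no real obstacle here; the whole statement is essentially a geometric corollary of Proposition~\ref{prop.locvan}, the only mild subtlety being the projection inequality $|x-y_0|\le|x-x_0|$, which is a one-line consequence of convexity but is the step that converts the boundary hypothesis \eqref{locvan} into a bound centered at an exterior point.
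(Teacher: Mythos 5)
Your proposal is correct and follows essentially the same route as the paper: reduce to Proposition~\ref{prop.locvan} by showing $u_0(x)\le\kappa_{p,q}|x-x_0|^\omega$ for every exterior $x_0$, with the key step being the inequality $|x-R_0 x_0/|x_0||\le|x-x_0|$ on $\overline{B(0,R_0)}$. The paper derives that inequality via the explicit orthogonal decomposition $x=\langle x,\xi\rangle\xi+y$ with $\xi=x_0/|x_0|$, while you invoke the variational characterization of the metric projection onto the convex ball; these are two phrasings of the same elementary geometric fact, so the proofs are equivalent in substance.
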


\begin{proof}
In view of Proposition~\ref{prop.locvan} it is sufficient to check that $u_0$ satisfies \eqref{locvan} for all $x_0\in\real^N$ such that $|x_0|>R_0$. Indeed, consider $x_0\in \real^N$ with $|x_0|>R_0$. Set $\xi := x_0/|x_0|$. Since \eqref{locvan} is satisfied for $R_0\xi$, there holds
$$
u_0(x) \le \kappa_{p,q} |x-R_0\xi|^{\omega}\ , \qquad x\in\real^N\ .
$$
Consider now $x\in B(0,R_0)$. Then $x=\langle x , \xi \rangle \xi + y$ and, since $|\langle x , \xi \rangle|\le |x|< R_0$,
$$
|x-R_0\xi|^2 = \left( R_0 - \langle x , \xi \rangle \right)^2 + |y|^2 \le \left( |x_0| - \langle x , \xi \rangle \right)^2 + |y|^2 = |x-x_0|^2\ .
$$
Consequently, $u_0(x) \le \kappa_{p,q} |x-x_0|^{\omega}$ for all $x\in B(0,R_0)$. This inequality being obviously true for $x\not\in B(0,R_0)$ since $u_0(x)=0$, we have thus shown that $u_0$ satisfies \eqref{locvan} for all $x_0\not\in B(0,R_0)$ and Proposition~\ref{prop.iwt} readily follows from Proposition~\ref{prop.locvan}.
\end{proof}

\bigskip

The last result of this section is devoted to the optimality of the range \eqref{exp} of the parameters $p$ and $q$ for instantaneous shrinking to take place.

\begin{proposition}\label{prop.nonloc}
Consider an initial condition $u_0$ satisfying \eqref{hypIC} and denote the corresponding solution to \eqref{eq1}-\eqref{eq2} by $u$. Let $p\in (p_c,2)$ and assume that $u_0$ satisfies \eqref{interm7}. If $q\in[p-1,p/2)$ then $u$ vanishes identically after a finite time $T_e$ and
$$
\mathcal{P}(t) = \real^N \quad\text{ for }\quad t\in (0,T_e)\ .
$$
\end{proposition}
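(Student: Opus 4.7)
The plan is to establish the two conclusions in turn.

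\emph{Finite time extinction.} The decay condition \eqref{interm7} on $u_0$ is exactly the hypothesis of \cite[Theorem~1.2(iii)]{IL12}, which yields the existence of $T_e\in(0,\infty)$ such that $u(T_e,\cdot)\equiv 0$ while $u(t,\cdot)\not\equiv 0$ for every $t\in[0,T_e)$.

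\emph{Positivity everywhere.} To show $\mathcal{P}(t)=\real^N$ for every $t\in(0,T_e)$, I would construct, for each prescribed $x_\star\in\real^N$, a strictly positive subsolution $v$ of \eqref{eq1} with $v(0,\cdot)\le u_0$ and $v(t,x_\star)>0$ for every $t\in(0,T_e)$, and then invoke the comparison principle of Proposition~\ref{prop.wp}. Guided by the extinction-type self-similar scaling of \eqref{eq1}, the natural candidate is
\[
v(t,x)=(T_e-t)^{a}\,\Phi\!\left(\frac{|x-x_\star|}{(T_e-t)^{b}}\right),\qquad a:=\frac{p-q}{p-2q},\qquad b:=\frac{p-1-q}{p-2q},
\]
where the exponents are chosen so that all three terms appearing in $\mathcal{L} v$ scale identically in $T_e-t$. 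In the range $q\in[p-1,p/2)$ we have $a>0$ and $b\le 0$, and the reduced profile equation for $\Phi$ is expected to admit radial globally positive profiles on $[0,\infty)$ with polynomial decay at infinity (leading exponent $q/(1-q)$, dictated by the Hamilton-Jacobi-like balance between $\partial_t v$ and $|\nabla v|^q$). In sharp contrast, the same reduced ODE for $q\in(0,p-1)$ produces only compactly supported profiles, mirroring the localization of Section~\ref{sec.loc} and paralleling the divergence of the constants $\kappa_{p,q}$ and $\omega$ of \eqref{const} as $q\nearrow p-1$.

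The main technical obstacle is twofold. First, one must establish the existence of the globally positive profile $\Phi$ through a shooting argument on the radial ODE, verifying persistent positivity precisely in the range $q\ge p-1$. Second, since the self-similar tail of $v(0,\cdot)$ decays like $|x|^{-q/(1-q)}$, which is strictly slower than the tail of $u_0$ guaranteed by \eqref{interm7} (an elementary computation gives the ordering $q/(1-q)<(p-q)/(q+1-p)$ whenever $q<p/2$), the subsolution cannot be compared to $u_0$ pointwise on all of $\real^N$; to enforce $v(0,\cdot)\le u_0$, the construction must be refined, either by introducing a spatial cutoff at a radius growing suitably in time, or by combining the self-similar subsolution with the positivity propagation for the pure fast-diffusion equation $\partial_tw=\Delta_pw$ on compact sets, or by an iteration over a nested family of balls. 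It is precisely in this matching step that the restriction $q\ge p-1$ plays a decisive role: it is the regime in which globally positive profiles exist and in which the fast-diffusion effect is strong enough to propagate positivity to all of $\real^N$ despite the gradient absorption, yielding $u(t,x_\star)>0$ for every $t\in(0,T_e)$ and hence, since $x_\star$ is arbitrary, $\mathcal{P}(t)=\real^N$ on $(0,T_e)$.
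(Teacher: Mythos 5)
Your first step (finite time extinction from \cite[Theorem~1.2(iii)]{IL12}) agrees with the paper and is correct. The second step, however, takes a genuinely different route from the paper, and as written it contains gaps that cannot be filled by the outline you give.

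The paper does not construct subsolutions at all in this proposition. Instead it exploits the a priori gradient estimates from \cite[Theorem~1.3]{IL12}, which become qualitatively different precisely when $q\ge p-1$: for $q\in(p-1,p/2)$ one has a uniform bound on $\nabla u^{-(q-p+1)/(p-q)}$, and for $q=p-1$ on $\nabla\log u$. Crucially, the exponent $-(q-p+1)/(p-q)$ is nonpositive, so the controlled quantity diverges as $u\to 0$. Integrating the gradient bound from a point $x_0$ where $u(t_0,x_0)>0$ therefore yields a strictly positive pointwise lower bound for $u(t_0,\cdot)$ on any ball $B(x_0,\varrho)\subseteq\mathcal{P}(t_0)$, which forces $\varrho=\infty$. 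This is a two-line argument once the estimates from \cite{IL12} are invoked, and it explains in a transparent way why $q\ge p-1$ is the threshold: for $q<p-1$ the corresponding gradient estimate controls $\nabla u^{(p-1-q)/(p-q)}$ with a \emph{positive} exponent, which vanishes with $u$ and gives no barrier against the formation of a free boundary.

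Your subsolution-based plan has three unresolved obstructions. First, the existence of globally positive self-similar profiles $\Phi$ is asserted, not proved; the shooting argument is nontrivial and you give no reason why it should close in the range $q\ge p-1$. Second, you correctly compute that your subsolution's tail $|x|^{-q/(1-q)}$ is fatter than the admissible tail of $u_0$ under \eqref{interm7}, so the comparison $v(0,\cdot)\le u_0$ fails on $\real^N$; your proposed fixes (cutoff, iteration over nested balls, hybridization with pure $p$-Laplacian positivity) are sketches, and the cutoff option in particular destroys the subsolution property at the cutoff surface unless handled with further care. Third, and most seriously, your ansatz $v(t,x)=(T_e-t)^a\Phi(\cdot)$ builds the unknown extinction time $T_e$ of $u$ into the subsolution. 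If you instead take a subsolution with extinction time $T>T_e$ it would be positive at $t=T_e$ and contradict $u(T_e)\equiv 0$, so $v(0,\cdot)\le u_0$ cannot hold; if $T<T_e$ you only get positivity on $(0,T)$. There is no mechanism in your argument that identifies the extinction time of the subsolution with that of $u$, so even granting steps one and two you would not obtain $\mathcal{P}(t)=\real^N$ up to $T_e$. I recommend discarding the subsolution approach here and using the gradient estimates directly.
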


Observe that Proposition~\ref{prop.nonloc} does not apply to $p=2$ as the range $[p-1,p/2)$ is empty in that case.

\begin{proof}
The occurrence of finite time extinction is a consequence of the discussion after Proposition~\ref{prop.ext}. Fix $t_0\in (0,T_e)$. By Proposition~\ref{prop.ext}, there exists $x_0\in\real^N$ such that $u(t_0,x_0)>0$. Let $B(x_0,\varrho)$ be the largest ball centered at $x_0$ and included in $\mathcal{P}(t_0)$.
We split the analysis into two cases:

\medskip

\noindent $\bullet \quad q\in (p-1,p/2)$: it follows from the
gradient estimates \cite[Theorem~1.3(iii)]{IL12} that for any $x\in B(x_0,\varrho)$, we have
$$
\left|\nabla u^{-(q-p+1)/(p-q)}(t_0,x)\right|\leq
K_0:=C\left(1+\|u_0\|_{\infty}^{(p-2q)/p(p-q)}t_0^{-1/p}\right),
$$
hence
$$
u^{-(q-p+1)/(p-q)}(t_0,x)\leq
u^{-(q-p+1)/(p-q)}(t_0,x_0)+K_0|x-x_0|,
$$
or equivalently
$$
\left[K_0|x-x_0|+u^{-(q-p+1)/(p-q)}(t_0,x_0)\right]^{-(p-q)/(q-p+1)}\leq
u(t_0,x).
$$
Assuming now for contradiction that $\varrho<\infty$, there exists $y\in\partial B(x_0,\varrho)$ such that $u(t_0,y)=0$, contradicting the previous lower bound. Consequently, $\varrho=\infty$ and thus $\mathcal{P}(t_0)=\real^N$.

\medskip

\noindent $\bullet \quad q=p-1$: it follows from \cite[Theorem~1.3(iv)]{IL12} that, for $x\in B(x_0,\varrho)$,
$$
|\nabla\log u(t_0,x)|\leq
K_0:=C\left(1+\|u_0\|_{\infty}^{(2-p)/p}t_0^{-1/p}\right),
$$
whence
$$
\log u(t_0,x_0)\leq\log u(t_0,x)+K_0|x-x_0|,
$$
which implies
$$
0<u(t_0,x_0)e^{-K_0|x-x_0|}\leq u(t_0,x),
$$
for any $x\in B(x_0,\varrho)$. Arguing as in the previous case, this readily implies that $\varrho=\infty$ and thus $\mathcal{P}(t_0)=\real^N$.
\end{proof}

\section{Upper and lower bounds at the extinction time}\label{sec.ulbet}

An interesting consequence of the localization property established in the previous section is the derivation of temporal upper and lower bounds for the $L^\infty$-norm of solutions to \eqref{eq1}-\eqref{eq2} with compactly supported initial data.

\begin{proposition}\label{prop.ulbet}
Let $u_0$ be an initial condition satisfying \eqref{hypIC} and denote the corresponding solution to \eqref{eq1}-\eqref{eq2} by $u$. Assume further that the exponents $p$ and $q$ satisfy \eqref{exp} and that $u_0$ is compactly supported. Then $u$ vanishes identically at a finite time $T_e>0$ and there is $C_1>0$ such that
\begin{equation}
\|u(t)\|_\infty \ge C_1 (T_e-t)^{1/(1-q)}\ , \qquad t\in [0,T_e]\ . \label{zz7}
\end{equation}
In addition, given $\vartheta\in (0,1)$, there is $C_2(\vartheta)>0$ such that
\begin{equation}
\|u(t)\|_\infty \le C_2(\vartheta) (T_e-t)^{\vartheta(p-q)/(p-2q)}\ , \qquad t\in [T_e/2,T_e]\ . \label{zz8}
\end{equation}
\end{proposition}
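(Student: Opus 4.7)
The finiteness of $T_e>0$ is a direct consequence of Proposition~\ref{prop.loc} (which confines $\supp u(t)$ in a fixed ball $B(0,R)$ for every $t\ge 0$) combined with Proposition~\ref{prop.ext} applied to $u(\cdot+\tau)$ for any $\tau>0$. The two bounds are of rather different nature and I would establish them by separate arguments.

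\textbf{Lower bound~\eqref{zz7}.} Thanks to the localization, integrating \eqref{eq1} over $\real^N$ makes the $p$-Laplacian term vanish (by the divergence theorem) and yields the identity
\begin{equation*}
\frac{d}{dt}\|u(t)\|_1 \;=\; -\int_{\real^N}|\nabla u(t,x)|^q\,dx.
\end{equation*}
For $t\in[T_e/2,T_e]$, the sharp gradient estimate of \cite[Theorem~1.3]{IL12} gives a pointwise bound $|\nabla u(t,x)|\le C\,u(t,x)^{1/(p-q)}$ with $C$ uniform on that interval. Since $q/(p-q)<1$ (as $q<p/2$ in \eqref{exp}), Jensen's inequality on the bounded support converts this into
\begin{equation*}
\int_{\real^N}|\nabla u(t)|^q\,dx \;\le\; C\,|B(0,R)|^{(p-2q)/(p-q)}\,\|u(t)\|_1^{q/(p-q)},
\end{equation*}
hence $(d/dt)\|u(t)\|_1 \ge -C'\|u(t)\|_1^{q/(p-q)}$. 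An ODE comparison with $y'=-C'y^{q/(p-q)}$, $y(T_e)=0$, then produces $\|u(t)\|_1\ge c(T_e-t)^{(p-q)/(p-2q)}$ on $[T_e/2,T_e]$, and the elementary bound $\|u(t)\|_1\le|B(0,R)|\,\|u(t)\|_\infty$ lifts this to $\|u(t)\|_\infty\ge c'(T_e-t)^{(p-q)/(p-2q)}$. Monotonicity of $\|u(\cdot)\|_\infty$ extends the bound to $[0,T_e/2]$. Finally, a direct computation gives $(p-q)(1-q)-(p-2q)=-q(p-1-q)<0$ in \eqref{exp}, so $(p-q)/(p-2q)<1/(1-q)$, and \eqref{zz7} follows after absorbing the factor $T_e^{1/(1-q)-(p-q)/(p-2q)}$ into~$C_1$.

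\textbf{Upper bound~\eqref{zz8}.} For the upper bound I would rely on $L^s$-energy estimates. Testing \eqref{eq1} against $s u^{s-1}$ and integrating yields
\begin{equation*}
\frac{d}{dt}\int u^s\,dx \;=\; -s(s-1)\int u^{s-2}|\nabla u|^p\,dx - s\int u^{s-1}|\nabla u|^q\,dx,
\end{equation*}
with the first right-hand term equal to $-c_s\int|\nabla u^{(s+p-2)/p}|^p\,dx$. Sobolev's inequality (available thanks to the compact support of $u(t)$) then produces a differential inequality $-(d/dt)\|u(t)\|_s^s \ge c_s\,\|u(t)\|_s^{\gamma_s}$ for some $\gamma_s<s$, whose ODE integration delivers $\|u(t)\|_s\le C_s(T_e-t)^{\alpha_s}$ with $\alpha_s\to(p-q)/(p-2q)$ as $s\to\infty$. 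The Gagliardo--Nirenberg interpolation $\|u\|_\infty \le C\,\|\nabla u\|_\infty^{N/(N+s)}\,\|u\|_s^{s/(N+s)}$, combined with the uniform gradient bound $\|\nabla u(t)\|_\infty\le L_0$, transfers this decay to $L^\infty$ at the cost of the factor $s/(N+s)<1$; choosing $s=s(\vartheta)$ large enough so that $(s/(N+s))\alpha_s \ge \vartheta(p-q)/(p-2q)$ yields \eqref{zz8}.

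The most delicate step will be the $L^s$ estimate: one must track carefully how the constants $c_s,C_s,\alpha_s$ depend on $s$ so that the limiting exponent $(p-q)/(p-2q)$ is indeed reached as $s\to\infty$. The loss $\vartheta<1$ in \eqref{zz8} is exactly the Gagliardo--Nirenberg interpolation loss incurred in passing from $L^s$ to $L^\infty$, and closing the gap $\vartheta=1$ (which would correspond to the self-similar extinction rate) would require a finer analysis of the extinction profile beyond this elementary route.
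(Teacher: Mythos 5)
There is a genuine and serious gap: in both parts of your argument the ODE comparison runs in the wrong direction, and the conclusions you extract from the differential inequalities are the opposite of what those inequalities actually yield.

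For the lower bound, your inequality reads $\frac{d}{dt}\|u(t)\|_1 \ge -C'\|u(t)\|_1^{q/(p-q)}$ with $q/(p-q)<1$ and terminal value $\|u(T_e)\|_1=0$. Setting $Y(t)=\|u(t)\|_1$ and integrating $\frac{d}{dt}Y^{(p-2q)/(p-q)}\ge -\tfrac{p-2q}{p-q}C'$ from $t$ to $T_e$ gives
$0-Y(t)^{(p-2q)/(p-q)}\ge -\tfrac{p-2q}{p-q}C'(T_e-t)$, that is $\|u(t)\|_1\le C(T_e-t)^{(p-q)/(p-2q)}$. This is an \emph{upper} bound, not a lower bound; a differential inequality saying that $\|u\|_1$ cannot decrease too fast, combined with the fact that it reaches zero at $T_e$, constrains it from above. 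An upper bound on $\|u(t)\|_1$ yields no control whatsoever on $\|u(t)\|_\infty$ from below, so the route through the $L^1$ norm collapses. The paper proves \eqref{zz7} by an entirely different mechanism: the gradient estimate \eqref{grad.est2}, $|\nabla u(t,x)|\le C_2\|u(s)\|_\infty^{1/q}(t-s)^{-1/q}$, combined with the localization \eqref{zz9} and the argument of \cite[Proposition~3.5]{HJBook}, which directly bounds the decay rate of the $L^\infty$ norm.

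The upper bound has the same sign error. From $-\frac{d}{dt}\|u(t)\|_s^s \ge c_s\|u(t)\|_s^{\gamma_s}$ with $\gamma_s<s$, i.e. $Y'\le -c_sY^{\gamma_s/s}$ with $\gamma_s/s<1$ and $Y(T_e)=0$, integration again produces $Y(t)^{1-\gamma_s/s}\ge (1-\gamma_s/s)c_s(T_e-t)$, a \emph{lower} bound on $\|u(t)\|_s$, not the claimed upper bound. (One can also check the exponent: your Sobolev route gives $\gamma_s=s+p-2$, so if the direction were correct you would land on the rate $1/(2-p)$, not $(p-q)/(p-2q)$.) The paper's proof of \eqref{zz8} does not use Sobolev at all. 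It keeps \emph{both} terms of the $L^{\varrho+1}$ identity, bounds each by $u^{(\varrho(p-q)+q)/(p-q)}$ via the gradient estimate $|\nabla u|\le Cu^{1/(p-q)}$, integrates the identity on $(t,T_e)$ using $u(T_e)\equiv 0$, and then exploits the localization \eqref{zz9}, H\"older's inequality on $B(0,R)$, and the time monotonicity of $s\mapsto\|u(s)\|_{\varrho+1}$ to obtain $\|u(t)\|_{\varrho+1}^{\varrho+1}\le C(\varrho)(T_e-t)\|u(t)\|_{\varrho+1}^{(\varrho(p-q)+q)/(p-q)}$, hence the rate $(p-q)/(p-2q)$ uniformly in $\varrho$. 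The final Gagliardo--Nirenberg step you propose matches the paper's, but it rests on intermediate bounds that your argument does not deliver.
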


\begin{proof}
We first infer from the compactness of the support of $u_0$, Proposition~\ref{prop.ext}, and Proposition~\ref{prop.loc} that the extinction time $T_e$ of $u$ is positive and finite and that  there exists $R>0$ such that
\begin{equation}
\mathcal{P}(t) \subseteq B(0,R)\ , \qquad t\in [0,T_e]\ . \label{zz9}
\end{equation}
We next recall the following gradient estimates, the first one being
proved in \cite[Theorem~1.3(v)]{IL12} for $p\in (p_c,2)$ and in
Proposition~\ref{prop.gradest} for $p=2$:
\begin{equation}\label{grad.est1}
\left|\nabla u^{(p-q-1)/(p-q)}(t,x)\right|\leq
C_1\left(1+\|u(s)\|_{\infty}^{(p-2q)/p(p-q)}(t-s)^{-1/p}\right),
\end{equation}
and the next one proved in \cite[Theorem~1.7]{IL12} for $p\in (p_c,2)$ and in \cite[Theorem~2]{GGK03} for $p=2$:
\begin{equation}\label{grad.est2}
|\nabla u(t,x)|\leq C_2\|u(s)\|_{\infty}^{1/q}(t-s)^{-1/q}
\end{equation}
for $(t,x)\in (0,\infty)\times\real^N$ and $s\in [0,t)$, the constants $C_1$ and $C_2$ depending only on $N$, $p$, and $q$. Thanks to \eqref{zz9} and \eqref{grad.est2}, we may argue as in \cite[Proposition~3.5]{HJBook} and establish that\begin{equation*}
\|u(t)\|_{\infty} \geq C (T_e-t)^{1/(1-q)}, \qquad t\in
(0,T_e)\ .
\end{equation*}

We next deduce from \eqref{grad.est1} that
$$
\left|\nabla u^{(p-q-1)/(p-q)}(t,x)\right|\leq C\ , \qquad (t,x)\in [T_e/2,T_e] \times\real^N\ ,
$$
so that
\begin{equation}
|\nabla u(t,x)|\leq C u(t,x)^{1/(p-q)}, \qquad \
x\in \mathcal{P}(t)\ , \quad  t\in[T_e/2,T_e] \ . \label{zz10}
\end{equation}
Consider $\varrho\ge 1$ and $t\in [T_e/2,T_e]$. We infer from \eqref{eq1} that
\begin{align*}
\frac{1}{\varrho+1} \frac{d}{dt} \|u(t)\|_{\varrho+1}^{\varrho+1} & = - \int_{\real^N} \left( \varrho u^{\varrho-1} |\nabla u|^p + u^\varrho |\nabla u|^q \right)\ dx \\
& = - \int_{\mathcal{P}(t)} \left( \varrho u^{\varrho-1} |\nabla u|^p + u^\varrho |\nabla u|^q \right)\ dx\ .
\end{align*}
Integrating with respect to time over $(t,T_e)$ and using \eqref{zz10}, we obtain
\begin{align*}
\|u(t)\|_{\varrho+1}^{\varrho+1} & = (\varrho+1) \int_t^{T_e} \int_{\mathcal{P}(s)} \left( \varrho u^{\varrho-1} |\nabla u|^p + u^\varrho |\nabla u|^q \right)\ dxds \\ & \le C(1+\varrho)^2 \int_t^{T_e} \int_{\mathcal{P}(s)} u^{(\varrho(p-q)+q)/(p-q)}\ dxds\ .
\end{align*}
It then follows from \eqref{zz9}, the H\"older inequality, and the time monotonicity of $s\mapsto \|u(s)\|_{\varrho+1}$ that
\begin{align*}
\|u(t)\|_{\varrho+1}^{\varrho+1} & \le C (1+\varrho)^2 |B(0,R)|^{(p-2q)/(\varrho+1)(p-q)} \int_t^{T_e} \|u(s)\|_{\varrho+1}^{(\varrho(p-q)+q)/(p-q)} \ ds \\
& \le C(\varrho) (T_e-t) \|u(t)\|_{\varrho+1}^{(\varrho(p-q)+q)/(p-q)}\ .
\end{align*}
Consequently,
\begin{equation}
\|u(t)\|_{\varrho+1} \le C(\varrho) (T_e-t)^{(p-q)/(p-2q)}\ , \qquad t\in [T_e/2,T_e]\ . \label{zz11}
\end{equation}
We finally combine \eqref{zz10} and the Gagliardo-Nirenberg inequality to obtain, for $t\in [T_e/2,T_e]$,
\begin{align*}
\|u(t)\|_\infty & \le C(\varrho) \|\nabla u(t)\|_\infty^{N/(\varrho+1+N)} \|u(t)\|_{\varrho+1}^{(\varrho+1)/(\varrho+1+N)} \\
& \le C(\varrho) \|u(t)\|_\infty^{N/(\varrho+1+N)(p-q)} \|u(t)\|_{\varrho+1}^{(\varrho+1)/(\varrho+1+N)} \ .
\end{align*}
Consequently
$$
\|u(t)\|_\infty \le C(\varrho) \|u(t)\|_{\varrho+1}^{(\varrho+1)(p-q)/((\varrho+1)(p-q)+N(p-1-q))} \ ,
$$
which, together with \eqref{zz11}, gives \eqref{zz8} as $\varrho$ can be chosen arbitrarily large.
\end{proof}

When $p=2$ the upper bound \eqref{zz8} can be improved, allowing the value $\vartheta=1$ in \eqref{zz8}.

\begin{proposition}\label{prop.impub}
Let $u_0$ be an initial condition satisfying \eqref{hypIC} and denote the corresponding solution to \eqref{eq1}-\eqref{eq2} by $u$. Assume further that $p=2$, $q \in (0,1)$, and that $u$ has a finite extinction time $T_e >0$. Then there is $C_3>0$ depending on $q$, $T_e$, and $u_0$ such that
\begin{equation*}
\|u(t)\|_\infty \le C_3 (T_e-t)^{(2-q)/(2-2q)}\ , \qquad t\in [0,T_e]\ .
\end{equation*}
\end{proposition}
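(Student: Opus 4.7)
The goal is to upgrade the upper bound \eqref{zz8} of Proposition~\ref{prop.ulbet} from the exponent $\vartheta(2-q)/(2-2q)$ (with $\vartheta<1$) to the sharp self-similar exponent $(2-q)/(2-2q)$, taking advantage of the fact that for $p=2$ the diffusion is linear and thus accessible via the heat semigroup. First, I would reduce to the time interval $[T_e/2,T_e]$: on $[0,T_e/2]$ the bound is immediate from $\|u(t)\|_\infty\le\|u_0\|_\infty$ by enlarging the constant $C_3$. On $[T_e/2,T_e]$, both the localization $\mathcal{P}(t)\subseteq B(0,R)$ from Proposition~\ref{prop.loc} and the Bernstein-type gradient estimate $|\nabla u(t,x)|\le C\, u(t,x)^{1/(2-q)}$ used in the proof of Proposition~\ref{prop.ulbet} are available uniformly in $(t,x)$.

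Second, since $\partial_t u-\Delta u = -|\nabla u|^q\le 0$, the function $u$ is a subsolution of the heat equation. Combining Duhamel's formula with the ultracontractive bound $\|e^{\tau\Delta}f\|_\infty\le C\,\tau^{-N/(2p)}\|f\|_p$ yields, for every $p\ge 1$ and $T_e/2\le s<t<T_e$,
\begin{equation*}
\|u(t)\|_\infty\le C\,(t-s)^{-N/(2p)}\,\|u(s)\|_p.
\end{equation*}
Choosing $s=2t-T_e$ so that $t-s=T_e-t$, and plugging in the $L^p$-estimate $\|u(s)\|_p\le A_p(T_e-s)^{(2-q)/(2-2q)}$ obtained in Proposition~\ref{prop.ulbet}, gives
\begin{equation*}
\|u(t)\|_\infty\le C\, A_p\, (T_e-t)^{(2-q)/(2-2q)-N/(2p)}.
\end{equation*}
Optimizing in $p$ with a choice of the order $p\sim\log(1/(T_e-t))$ keeps the spurious factor $(T_e-t)^{-N/(2p)}$ bounded, so that one arrives at a bound with the sharp exponent $(2-q)/(2-2q)$ up to a logarithmic correction. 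A final bootstrap, reinjecting this improved bound into the Duhamel identity once more, should eliminate the residual logarithm and yield $\|u(t)\|_\infty\le C_3(T_e-t)^{(2-q)/(2-2q)}$ on $[T_e/2,T_e]$; the reduction of the first step then extends the bound to $[0,T_e]$.

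The hard part is closing this last gap: the constant $A_p$ coming from the proof of Proposition~\ref{prop.ulbet} grows like a positive power of $p$, and balancing this growth against the $(T_e-t)^{-N/(2p)}$ loss in the ultracontractive estimate is delicate. An alternative approach I would consider is a Moser-type iteration on a sequence of exponents $p_n\to\infty$ (for instance $p_n=p_0(N/(N-2))^n$ if $N\ge 3$, or a doubling sequence otherwise): at each step the Sobolev inequality applied to $u^{p_n/2}$, coupled with the Bernstein bound $|\nabla u|^2\le C\,u^{2/(2-q)}$, produces a recursive inequality relating $\|u(t)\|_{p_{n+1}}$ to $\|u(t)\|_{p_n}$, and the energy identity underlying Proposition~\ref{prop.ulbet} is re-invoked at exponent $p_{n+1}$ in order to preserve the decay factor $(T_e-t)^{(2-q)/(2-2q)}$. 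The bookkeeping then reduces to proving that the logarithms of the per-step multiplicative constants form a convergent series, which boils down to the summability of $p_n^{-1}\log p_n$.
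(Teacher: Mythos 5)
Your approach is genuinely different from the paper's and, as you anticipate yourself, it has a real gap at the end which I do not think closes in the way you suggest. The core issue is quantitative: tracking the $m$-dependence in \eqref{zz11}, one finds that the constant satisfies $C(\varrho)\sim(1+\varrho)^{(2-q)/(1-q)}$ (the $(1+\varrho)^2$ coming from the energy identity, raised to the power $(2-q)/(2-2q)$ when taking roots). Feeding this into $\|e^{\tau\Delta}\|_{L^m\to L^\infty}\le C\tau^{-N/(2m)}$ and optimizing in $m$ forces $m\sim\log(1/\tau)$, which leaves a residual factor $(\log(1/\tau))^{(2-q)/(1-q)}$. The proposed bootstrap does not remove it: the only sensible way to ``reinject'' is through the backward Duhamel identity $e^{(T_e-t)\Delta}u(t)=\int_t^{T_e}e^{(T_e-s)\Delta}|\nabla u(s)|^q\,ds$, and while Bernstein plus the new bound does improve the estimate for $\|e^{(T_e-t)\Delta}u(t)\|_\infty$, there is no mechanism for passing from a bound on $e^{(T_e-t)\Delta}u(t)$ back to a pointwise bound on $u(t)$ — the heat semigroup averages, and an averaged smallness does not control the supremum. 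Similarly, the Moser-type alternative runs into the same obstruction: when you try to preserve the full decay factor $(T_e-t)^{(2-q)/(2-2q)}$ at every level of the iteration, the per-step constant has to absorb the $\tau^{-N/(2p_n)}$ interpolation loss, and balancing this against the $p_n$-dependence of the energy constant is exactly the place where the logarithm reappears. So the gap you flag is not a bookkeeping issue — it is structural.

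The paper's proof sidesteps this entirely. It assumes for contradiction that $u(t_n,x_n)\ge n(T_e-t_n)^{(2-q)/(2-2q)}$ for a sequence $t_n\to T_e$, and works with the backward Duhamel identity evaluated at $T_e$. Writing $h(t,x):=\bigl(e^{(T_e-t)\Delta}u(t)\bigr)(x)$, it uses the Bernstein estimate $|\nabla u|^q\le c\,u^{q/(2-q)}$ and Jensen's inequality (exploiting the concavity of $\xi\mapsto\xi^{q/(2-q)}$) to derive the integral inequality $h\le c_1\int_t^{T_e}h^{q/(2-q)}\,ds$, whose nonlinear Gronwall-type integration gives $h(t,x)\le c_3(T_e-t)^{(2-q)/(2-2q)}$ with $c_3$ \emph{independent of the Lebesgue exponent}. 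The contradiction is then produced on the other side: the very same Bernstein estimate shows that $u(t_n,\cdot)^{(1-q)/(2-q)}$ stays above $\tfrac12 n^{(1-q)/(2-q)}(T_e-t_n)^{1/2}$ on a ball of radius comparable to $n^{(1-q)/(2-q)}(T_e-t_n)^{1/2}$ around $x_n$, which is precisely the scale seen by the heat kernel in $h(t_n,x_n)$; this forces $c_3\ge cn$ for all $n$. This pointwise lower bound — turning a single large value of $u$ into a ``lump'' the heat kernel must detect — is the ingredient that replaces your attempted upward interpolation, and it is what your proposal is missing. I would encourage you to look at \cite{HV92} and at \cite[Lemma~2.2]{FH} for the origin of this trick, and to use the gradient estimate of Proposition~\ref{prop.gradest} in both directions (as an upper bound on $|\nabla u|^q$ for Jensen, and as a Lipschitz bound on $u^{(1-q)/(2-q)}$ for the lower bound).
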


\begin{proof}
 We adapt the proof of \cite[Proposition~2.2]{HV92}. Assume for contradiction that, for each $n \ge 1$, there are
    $x_n \in \real^N$ and $t_n \in (0,T_e)$ such that
    \begin{equation}\label{pub2}
      u(t_n,x_n) \ge n (T_e - t_n)^{(2-q)/(2-2q)} \ .
    \end{equation}
    Since $u \le \| u_0 \|_\infty$ and $q<1$, it readily follows from \eqref{pub2} that
    \begin{equation}\label{pub3}
      \lim\limits_{n \to \infty} t_n = T_e \ .
    \end{equation}
    As $u(T_e) \equiv 0$, we infer from the variation of constants formula for \eqref{eq1} that for $t \in (0,T_e)$
    \begin{equation}\label{pub4}
      0 = u(T_e) = e^{(T_e-t) \Delta} u (t) - \int_t^{T_e} e^{(T_e -s) \Delta} |\nabla u(s)|^q ds \ .
    \end{equation}
    Now, for $t \in (T_e/2, T_e)$, it follows from Proposition~\ref{prop.gradest} and the properties of the fundamental solution of the heat equation
    that there is $c_1 >0$ such that
    \begin{equation}\label{pub5}
    \begin{split}
     I(t,x) \ & := \left( \int_t^{T_e} e^{(T_e -s) \Delta} |\nabla u(s)|^q ds \right) (x)\\
        &=  \int_t^{T_e} \frac{1}{(4\pi (T_e -s))^{N/2}} \int_{\real^N} \exp\left( - \frac{|x-y|^2}{4(T_e-s)}
        \right) |\nabla u(s,y)|^q dyds \\
        &=  \left( \frac{2-q}{1-q} \right)^q \int_t^{T_e} \frac{1}{(4\pi (T_e -s))^{N/2}}\\
        &\qquad \times \int_{\real^N}
        \exp\left( - \frac{|x-y|^2}{4(T_e-s)} \right) u^{q/(2-q)}(s,y) |\nabla u^{(1-q)/(2-q)}(s,y)|^q dyds\\
        &\le c_1 \int_t^{T_e} \frac{1}{(4\pi (T_e -s))^{N/2}} \int_{\real^N}
        \exp\left( - \frac{|x-y|^2}{4(T_e-s)} \right) u^{q/(2-q)}(s,y) dyds \\
        &\le c_1 \int_t^{T_e} \left[ \frac{1}{(4\pi (T_e -s))^{N/2}} \int_{\real^N}
        \exp\left( - \frac{|x-y|^2}{4(T_e-s)} \right) u(s,y) dy \right]^{q/(2-q)}
        ds,
    \end{split}
    \end{equation}
    where we use Jensen's inequality for concave functions in the last step in view of $q <2-q$. Introducing
    \begin{align*}
     &h(t,x) := \left( e^{(T_e-t)\Delta} u(t) \right)(x) = \frac{1}{(4\pi (T_e -t))^{N/2}} \int_{\real^N} \exp\left( - \frac{|x-y|^2}{4(T_e-t)} \right) u(t,y) dy \ ,\\
     &H(t,x) := \int_t^{T_e} h^{q/(2-q)} (s,x) ds
    \end{align*}
    for $(t,x) \in (T_e/2, T_e) \times \real^N$, we infer from \eqref{pub4} and \eqref{pub5} that
    \begin{equation}\label{pub6}
      h(t,x) = I(t,x) \le c_1 H(t,x), \qquad (t,x) \in (T_e/2, T_e) \times \real^N \ .
    \end{equation}
    This implies that
    $$
      - \partial_t H(t,x) = h^{q/(2-q)} (t,x) \le c_2 H^{q/(2-q)} (t,x)
    $$
    with some $c_2 >0$, whence an integration shows that
    $$
      - \frac{2-q}{2-2q}\left( H^{(2-2q)/(2-q)} (T_e,x) -  H^{(2-2q)/(2-q)} (t,x) \right) \le c_2 (T_e-t) \ ,
    $$
    for $(t,x) \in (T_e/2, T_e) \times \real^N$.
    Consequently, using $H(T_e) \equiv 0$ and \eqref{pub6}, we obtain $c_3>0$ such that
    \begin{equation}\label{pub7}
      h(t,x) \le c_1 H(t,x) \le c_3 (T_e-t)^{(2-q)/(2-2q)} \ , \qquad (t,x) \in (T_e/2, T_e) \times \real^N \ .
    \end{equation}
    Now in view of \eqref{pub3} there is $n_0 \in \mathbb{N}$ such that $t_n \in (T_e/2, T_e)$ for all $n \ge n_0$.
    We use once more Proposition~\ref{prop.gradest} along with \eqref{pub2} to obtain $c_4 >0$ such that, for all $n \ge n_0$ and
    $x \in \real^N$,
    \begin{align*}
      u^{(1-q)/(2-q)}(t_n,x) \ & \ge u^{(1-q)/(2-q)}(t_n,x_n) - c_4 |x-x_n|\\
        & \ge n^{(1-q)/(2-q)} (T_e-t_n)^{1/2} - c_4 |x-x_n| \ge \frac{n^{(1-q)/(2-q)}}{2} (T_e-t_n)^{1/2} \ ,
    \end{align*}
    provided that $x \in B\left( x_n, \frac{n^{(1-q)/(2-q)}}{2c_4} (T_e-t_n)^{1/2} \right)$. We then infer from the latter estimate, \eqref{pub7}, the definition of $h$, and the nonnegativity of $u$ that
    \begin{align*}
     c_3 (T_e-t_n)^{(2-q)/(2-2q)} \
        & \ge h(t_n,x_n)\\
        & \ge \frac{1}{(4\pi (T_e -t_n))^{N/2}} \int_{B\left( x_n, \frac{n^{(1-q)/(2-q)}}{2c_4} (T_e-t_n)^{1/2} \right)}
      \exp\left( - \frac{|x_n-y|^2}{4(T_e-t_n)} \right)\\
        & \qquad \times \frac{n}{2^{(2-q)/(1-q)}} (T_e -t_n)^{(2-q)/(2-2q)} dy
    \end{align*}
    for all $n \ge n_0$. We conclude that
    $$
      c_3 \ge \frac{n}{ 2^{(2-q)/(1-q)} (4\pi)^{N/2}} \int_{B\left( 0, \frac{n^{(1-q)/(2-q)}}{2c_4} \right)} \exp \left(-\frac{|z|^2}{4} \right) dz
    $$
    for all $n \ge n_0$, and a contradiction.
\end{proof}

\section{Non-extinction}\label{sec.nonext}

This section is devoted to the proof of Theorem~\ref{th.noninst}.
As in \cite{BLSS02} (which only deals with the case $p=2$), it
relies on the comparison principle, this time with suitable
subsolutions which we construct now.

\begin{lemma}\label{lem.sub}
Assume that $(p,q)$ satisfies \eqref{exp}. There exists $b_0
>0$ depending only on $p$ and $q$ such that, given $T>0$, $a>0$, and
$b\in (0,b_0)$, the function
\begin{equation}\label{subform}
w(t,x):=(T-t)^{1/(1-q)}\left(a+b|x|^{\theta}\right)^{-\gamma},
\qquad \theta=\frac{p}{p-1}, \ \gamma=\frac{q(p-1)}{p(1-q)},
\end{equation}
is a subsolution to \eqref{eq1} in $(0,T)\times\real^N$ provided $a$ is large enough.
\end{lemma}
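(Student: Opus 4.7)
The plan is to verify $\mathcal{L}w \le 0$ pointwise on $(0,T)\times\real^N$ by direct computation from the radial formula \eqref{eq.paroprad}. The specific exponents $\theta = p/(p-1)$ and $\gamma = q(p-1)/(p(1-q))$ are engineered to produce two algebraic identities that make the calculation tractable: $(\theta-1)(p-1) = 1$, which eliminates the explicit $r$--dependence of $\frac{N-1}{r}|\partial_r w|^{p-2}\partial_r w$ and of the first piece of $(p-1)|\partial_r w|^{p-2}\partial_r^2 w$, and $\gamma\theta(1-q) = q$ (equivalently, $\gamma(1-q) = q(p-1)/p$), which synchronizes the temporal and Hamilton--Jacobi terms. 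Setting $\tau(t) := (T-t)^{1/(1-q)}$, $B := a + b|x|^\theta$, and $A := \gamma b\theta$, I would differentiate $w = \tau B^{-\gamma}$ and, using the two identities together with $(p-1)(\theta-1) + N - 1 = N$, arrive at the closed forms
\begin{equation*}
\partial_t w + |\nabla w|^q = \tau^q B^{-\gamma}\left[(\gamma\theta)^q b^{q(p-1)/p}\left(\frac{b|x|^\theta}{B}\right)^{q/p} - \frac{1}{1-q}\right]
\end{equation*}
and
\begin{equation*}
\Delta_p w = \tau^{p-1}\left[\frac{(p-1)(\gamma+1)}{\gamma}\, A^p\, |x|^\theta\, B^{-(\gamma+1)(p-1)-1} - N A^{p-1}\, B^{-(\gamma+1)(p-1)}\right]\ .
\end{equation*}

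Since $b|x|^\theta/B \le 1$, choosing
\begin{equation*}
b_0 := \bigl[(1-q)(\gamma\theta)^q\bigr]^{-p/(q(p-1))}
\end{equation*}
makes the bracket in the first display strictly negative whenever $b \in (0,b_0)$, yielding $\partial_t w + |\nabla w|^q \le -\delta\, \tau^q B^{-\gamma}$ for some $\delta = \delta(p,q,b) > 0$. The first (positive) monomial in $\Delta_p w$ contributes favorably to $-\Delta_p w$ and can be discarded, so it remains to absorb the unfavorable contribution $NA^{p-1}\tau^{p-1}B^{-(\gamma+1)(p-1)}$ into this margin. That reduces to the pointwise inequality
\begin{equation*}
NA^{p-1}\tau^{p-1-q}\, B^{\gamma(2-p) - (p-1)} \le \delta\ .
\end{equation*}
A short algebraic simplification gives $\gamma(2-p) - (p-1) = (p-1)(2q-p)/(p(1-q))$, which is strictly negative throughout the range \eqref{exp} since $q < p-1 \le p/2$. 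Using $B \ge a$ and $\tau \le T^{1/(1-q)}$, the left-hand side is bounded by a constant depending on $N,p,q,b,T$ times $a^{\gamma(2-p)-(p-1)}$, which tends to zero as $a \to \infty$. Choosing $a$ large enough then secures $\mathcal{L}w \le 0$.

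The main obstacle is the algebraic bookkeeping: the expansion of $\mathcal{L}w$ breaks into several monomials in $|x|$, $B$, and $\tau$, and the essential balance between $\partial_t w$ and $|\nabla w|^q$ (which is what forces the bound $b < b_0$) only becomes visible once the cancellations encoded in the two identities are carried out simultaneously. Once this is done, every remaining diffusive error term carries a strictly negative power of $B$ and is therefore controlled by the flatness parameter $a$, reflecting the intuition that in the regime $q < p-1$ the gradient absorption drives the dynamics while the $p$--Laplacian acts as a subordinate perturbation that a sufficiently flat profile can neutralize.
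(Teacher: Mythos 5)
Your proposal is correct and follows essentially the same route as the paper's proof: compute $\mathcal{L}w$ in radial form, use the structural identities $(\theta-1)(p-1)=1$ and $\gamma\theta(1-q)=q$ to bound $\partial_t w + |\nabla w|^q$ by a strictly negative multiple of $\tau^q B^{-\gamma}$ via the constraint $b<b_0$, and absorb the remaining diffusion term using the negative sign of the exponent $\gamma(2-p)-(p-1)=\frac{(p-1)(2q-p)}{p(1-q)}$ together with $B\ge a$. The only cosmetic difference is the choice of $b_0$ (the paper inserts a factor $2$ to keep the margin bounded below by $\tfrac{1}{2(1-q)}$ uniformly in $b$, whereas yours allows the margin $\delta$ to depend on $b$), which is immaterial since $a$ is allowed to depend on $b$.
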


\begin{proof}
Since $w$ is radially symmetric and letting $r=|x|$ as usual, we
note that
$$
\mathcal{L}w=\partial_tw-|\partial_rw|^{p-2}\left[(p-1)\partial_{r}^2 w+\frac{N-1}{r}\partial_rw\right]+|\partial_rw|^q.
$$
Setting $y:=a+b|x|^{\theta}$, we easily have:
$$
\partial_tw(t,x)=-\frac{1}{1-q}(T-t)^{q/(1-q)}y^{-\gamma}, \qquad
\partial_rw(t,x)=-\gamma\theta b(T-t)^{1/(1-q)}r^{\theta-1}y^{-\gamma-1},
$$
and
$$
\partial_{r}^2w(t,x)=-\gamma\theta
b(T-t)^{1/(1-q)}y^{-\gamma-2}r^{\theta-2}\left[(\theta-1)y-(1+\gamma)\theta
br^{\theta}\right],
$$
whence, after easy manipulations,
\begin{equation*}
\begin{split}
\mathcal{L}w&=(T-t)^{q/(1-q)}y^{-\gamma}\left[(\gamma\theta
b)^{q}r^{q(\theta-1)}y^{\gamma(1-q)-q}-\frac{1}{1-q}\right]\\&+(\gamma\theta
b)^{p-1}(T-t)^{(p-1)/(1-q)}r^{\theta(p-1)-p}y^{-(p-1)\gamma-p}\\&\times\left[((p-1)(\theta-1)+N-1)y-(1+\gamma)\theta
b(p-1)r^{\theta}\right],
\end{split}
\end{equation*}
or equivalently,
\begin{equation*}
\begin{split}
(T-t)^{-q/(1-q)}y^{\gamma}\mathcal{L}w&=(\gamma\theta
b)^{q}r^{q(\theta-1)}y^{\gamma(1-q)-q}-\frac{1}{1-q}\\&+(\gamma\theta
b)^{p-1}(T-t)^{(p-1-q)/(1-q)}r^{\theta(p-1)-p}y^{(2-p)\gamma-p}\\&\times\left[((p-1)(\theta-1)+N-1-(p-1)\theta(1+\gamma))y+(1+\gamma)\theta(p-1)a\right]\\
&=(\gamma\theta
b)^{q}r^{q(\theta-1)}y^{\gamma(1-q)-q}-\frac{1}{1-q}\\&+(\gamma\theta
b)^{p-1}(T-t)^{(p-1-q)/(1-q)}r^{\theta(p-1)-p}y^{(2-p)\gamma-p}\\&\times\left[(1+\gamma)\theta(p-1)a+(N-1-(p-1)(1+\theta\gamma))y\right].
\end{split}
\end{equation*}
Recalling now the values of $\theta$ and $\gamma$ from
\eqref{subform} and that $q<p-1$, we obtain
\begin{equation}\label{interm10}
\begin{split}
(T-t)^{-q/(1-q)}y^{\gamma}\mathcal{L}w&=(\gamma\theta)^qb^{q/\theta}(br^{\theta})^{q/p}y^{\gamma(1-q)-q}-\frac{1}{1-q}\\&+(\gamma\theta
b)^{p-1}(T-t)^{(p-1-q)/(1-q)}y^{(2-p)\gamma-p}\\&\times\left[(1+\gamma)pa+\left(N-1-\frac{p-1}{1-q}\right)y\right]\\
&\leq(\gamma\theta)^qb^{q/\theta}-\frac{1}{1-q}\\&+(\gamma\theta
b)^{p-1}T^{(p-1-q)/(1-q)}y^{(2-p)\gamma-p+1}[(1+\gamma)p+N-1]\\&\leq(\gamma\theta)^qb^{q/\theta}-\frac{1}{1-q}\\&+(\gamma\theta
b)^{p-1}T^{(p-1-q)/(1-q)}a^{(2-p)\gamma-p+1}[(1+\gamma)p+N-1],
\end{split}
\end{equation}
where we have used in the last inequality the fact that
\begin{equation}
(2-p)\gamma-p+1=\frac{(p-1)(2q-p)}{p(1-q)}<0 \label{interm101}
\end{equation}
due to $q<p-1\le p/2$. Setting
$$
b_0 := \left( 2(1-q) (\gamma\theta)^q \right)^{-\theta/q}\ ,
$$
it follows from \eqref{interm10} that, for $b\in (0,b_0)$,
\begin{align*}
(T-t)^{-q/(1-q)}y^{\gamma}\mathcal{L}w & \le (\gamma\theta
b)^{p-1}T^{(p-1-q)/(1-q)}a^{(2-p)\gamma-p+1}[(1+\gamma)p+N-1] \\
& \qquad - \frac{1}{2(1-q)}.
\end{align*}
We use once more \eqref{interm101} to conclude that the right hand side of the previous inequality is negative for $a$ large enough, that is, $w$ is a subsolution to
\eqref{eq1} in $(0,T)\times\real^N$.
\end{proof}

With this technical lemma, we are in a position to complete the proof of Theorem~\ref{th.noninst}.

\begin{proof}[Proof of Theorem~\ref{th.noninst}]
Let $u_0$ be an initial condition satisfying \eqref{hypIC} and
\eqref{init.decay2}. Fix $b\in (0,b_0)$ and $T>0$. Since
$$
\lim\limits_{|x|\to\infty}u_0(x)|x|^{q/(1-q)}=\infty,
$$
there exists $R>0$ sufficiently large such that
\begin{equation}\label{compout}
\begin{split}
u_0(x) & \geq T^{1/(1-q)} b^{-q(p-1)/p(1-q)} |x|^{-q/(1-q)} \\
& \geq T^{1/(1-q)}\left(a+b|x|^{p/(p-1)}\right)^{-q(p-1)/p(1-q)},
\end{split}
\end{equation}
for any $a>0$ and $x\in\real^N\setminus B(0,R)$. It remains to show
that it is possible to choose $a>0$ such that \eqref{compout} also
holds true inside $B(0,R)$. Since $u_0>0$ in $\real^N$ and
$\overline{B(0,R)}$ is a compact set, we infer from the continuity
of $u_0$ that
$$
\delta:=\inf\{u_0(x): x\in B(0,R)\}>0.
$$
Choose then $a>0$ sufficiently large such that
$$
T^{1/(1-q)}a^{-q(p-1)/p(1-q)}<\delta\ ,
$$
and set
$$
w(t,x):=(T-t)^{1/(1-q)}\left(a+b|x|^{p/(p-1)}\right)^{-q(p-1)/p(1-q)}, \qquad (t,x)\in (0,T)\times\real^N.
$$
Then
\begin{equation}\label{compin}
u_0(x)\geq\delta>T^{1/(1-q)}a^{-q(p-1)/p(1-q)}\geq w(0,x),
\end{equation}
for any $x\in B(0,R)$ and we combine \eqref{compout} and
\eqref{compin} to conclude that $u_0(x)\geq w(0,x)$ for any
$x\in\real^N$. Owing to the choice $b\in (0,b_0)$, the function $w$
is a subsolution to \eqref{eq1} according to Lemma~\ref{lem.sub},
and the comparison principle ensures that
$$
u(t,x)\geq w(t,x)>0, \qquad {\rm for \ any} \ t\in(0,T), \
x\in\real^N.
$$
Consequently, $\mathcal{P}(t)=\real^N$ for all $t\in (0,T)$ and, since $T>0$ is arbitrarily chosen, we conclude that
$\mathcal{P}(t)=\real^N$ for any $t>0$, as stated.
\end{proof}

\section{Improved finite time extinction property}\label{sec.improv}

This section is devoted to the proof of Theorem~\ref{th.improv},
which improves the range of initial data for which finite time
extinction takes place. As in the previous section, the main argument
is again the comparison principle, this time with suitable
supersolutions. The idea to construct them is adapted from \cite[Lemma~7]{BLSS02}, where similar supersolutions are built in the semilinear case $p=2$ and $q\in(0,1)$. To this end we define
\begin{equation}\label{selfsimexp}
\a:=\frac{p-q}{p-2q}>0, \qquad \b:=\frac{q-p+1}{p-2q}<0,
\end{equation}
which are the usual self-similar exponents associated to Eq.
\eqref{eq1} and look for self-similar supersolutions of \eqref{eq1}.

\begin{lemma}\label{lem.super}
Assume that $(p,q)$ satisfies \eqref{exp} and that $p<2$. There
exists $A_0>0$ depending only on $N$, $p$, and $q$ such that, for
all $T>0$ and $A\in (0,A_0)$, the function
\begin{equation}\label{superform}
W(t,x):=(T-t)^{\a}f(|x|(T-t)^{\b}), \qquad
(t,x)\in(0,T)\times\real^N,
\end{equation}
with
$$
f(y):= A(1+y^2)^{-\gamma}, \quad y\in\real, \qquad \gamma:=\frac{q}{2(1-q)},
$$
is a supersolution to \eqref{eq1} in $(0,T)\times\real^N$.
\end{lemma}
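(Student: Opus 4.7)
The plan is to reduce the assertion to a one-dimensional inequality via the self-similar structure of $W$, and then verify that inequality by a case analysis in $y := |x|(T-t)^\beta$. First, using the radial formula \eqref{eq.paroprad} together with the elementary identities $\partial_t W = (T-t)^{\alpha-1}[-\alpha f - \beta y f']$, $\partial_r W = (T-t)^{\alpha+\beta}f'$, and $\partial_r^2 W = (T-t)^{\alpha+2\beta}f''$, one checks that the two identities $\alpha-1 = (\alpha+\beta)q$ and $\alpha-1 = (p-1)\alpha + p\beta$ (both encoded in \eqref{selfsimexp}) force each of the three contributions to $\mathcal{L}W$ to carry the common time prefactor $(T-t)^{\alpha-1}$. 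Consequently
\begin{equation*}
\mathcal{L}W(t,x) = (T-t)^{\alpha-1}\,\mathcal{H}(f)(y),
\end{equation*}
with
\begin{equation*}
\mathcal{H}(f)(y) := -\alpha f(y) - \beta y f'(y) - (p-1)|f'(y)|^{p-2}f''(y) - \frac{N-1}{y}|f'(y)|^{p-2}f'(y) + |f'(y)|^q,
\end{equation*}
and the task reduces to showing $\mathcal{H}(f)(y)\ge 0$ for all $y>0$.

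Second, I would substitute $f(y)=A(1+y^2)^{-\gamma}$, together with $f'(y)=-2A\gamma y(1+y^2)^{-\gamma-1}$ and $f''(y)=-2A\gamma(1+y^2)^{-\gamma-2}[1-(2\gamma+1)y^2]$, into $\mathcal{H}(f)$. Two algebraic identities specific to the choice $\gamma = q/(2(1-q))$ organize the computation: the identity $2\gamma+1 = 1/(1-q)$ pools the two $p$-Laplacian monomials into a single expression with common factor $(1+y^2)^{-(\gamma+1)(p-1)-1}$ and bracket $(N+p-2) + \bigl(N-1-\tfrac{p-1}{1-q}\bigr)y^2$, while the identity $2\beta\gamma-\alpha = -1/(1-q)$ (a direct consequence of \eqref{selfsimexp}) collapses the time-derivative pair into
\begin{equation*}
-\alpha f - \beta y f' = -\frac{A}{1-q}(1+y^2)^{-\gamma-1}\bigl[\alpha(1-q) + y^2\bigr].
\end{equation*}
The Hamilton-Jacobi piece is then the explicitly nonnegative $(2A\gamma)^q y^q(1+y^2)^{-(\gamma+1)q}$. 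In total, $\mathcal{H}(f)$ is the sum of three contributions of respective amplitudes $A$, $A^{p-1}$, and $A^q$, each decaying as $y^{-q/(1-q)}$ for large $y$.

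Third, I would verify pointwise nonnegativity of $\mathcal{H}(f)$ by splitting $(0,\infty)$ into three regimes. Near $y=0$, the $p$-Laplacian contribution contains the factor $y^{p-2}$ with the strictly positive coefficient $N+p-2>0$, and since $p<2$ this blows up and dominates the bounded negative piece $-\alpha A$ from the time derivative; hence $\mathcal{H}(f)(y)\to +\infty$ as $y\to 0^+$. On any compact set $[\varepsilon,M]\subset(0,\infty)$, the Hamilton-Jacobi piece is bounded below by a positive constant times $A^q$, while the negative contributions are bounded above by $C(A + A^{p-1})$. Since $q<p-1<1$, both $A^{1-q}$ and $A^{p-1-q}$ vanish as $A\to 0$, so the Hamilton-Jacobi term dominates once $A$ is small enough. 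As $y\to\infty$, comparing the leading coefficients at the common decay rate $y^{-q/(1-q)}$ yields a requirement of the form $(2A\gamma)^q(1-q)\ge A$ modulo lower-order terms, which fixes the threshold $A_0 = A_0(N,p,q)$.

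The main obstacle is the subcase $(N-1)(1-q) < p-1$ (which occurs in particular when $N=1$), in which the diffusion bracket changes sign at some $Y_0>0$ and the $p$-Laplacian contribution itself becomes negative on $(Y_0,\infty)$. Positivity must then be recovered solely from the Hamilton-Jacobi term on that whole range. The key observation is that the $p$-Laplacian contribution decays at infinity like $A^{p-1}y^{-(p-q)/(1-q)}$, and thanks to $q<p/2$ one has $(p-q)/(1-q) > q/(1-q)$; this extra decay margin ensures that the single threshold $A_0$ chosen above also controls the bad range, yielding the desired uniform lower bound $\mathcal{H}(f)\ge 0$ on $(0,\infty)$ for all $A\in(0,A_0)$.
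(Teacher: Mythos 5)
Your approach is essentially the same as the paper's: reduce to the one-variable inequality $\mathcal{H}(f)(y)\ge 0$ via the self-similar ansatz, substitute the profile $f$, and verify nonnegativity by splitting the range of $y$ and using smallness of $A$. The key ingredients you identify---that the Hamilton--Jacobi term carries the lowest power $A^q$ (so it dominates both the $A^{p-1}$ diffusion piece and the $A$ time-derivative piece as $A\to 0$), that the diffusion bracket has the positive constant $N+p-2$ at $y=0$ and the profile supplies the favorable factor $y^{p-2}$ for $p<2$, and that the diffusion term decays strictly faster than $y^{-q/(1-q)}$ at infinity because $p>2q$---are exactly the ones driving the paper's proof.

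Your organization differs slightly: you propose a three-way split (near $0$, compact, near $\infty$), whereas the paper uses a single cutoff $y_0:=1/\sqrt{4(\gamma+1)}$ together with the extra device of splitting the Hamilton--Jacobi term into two halves, one half absorbing the $-(\alpha-2\beta\gamma)Ay^2$ term for all $y$ and the other controlling the diffusion and $-A\alpha$ terms for $y\ge y_0$. Your version also makes explicit the two algebraic identities $2\gamma+1=1/(1-q)$ and $2\beta\gamma-\alpha=-1/(1-q)$ that the paper only uses implicitly; this is a nice way to present the computation. Two small caveats: the parenthetical remark that all three contributions decay as $y^{-q/(1-q)}$ is a slip (the diffusion term decays as $y^{-(p-q)/(1-q)}$, as you note later); and the gluing of the three regimes is left somewhat informal---you should exhibit cutoffs that are independent of $A$ and state the two or three smallness conditions on $A$ explicitly, rather than inferring them ``modulo lower-order terms.'' With that bookkeeping added (which is exactly what the paper's two-way split at $y_0$ accomplishes cleanly), the argument is complete.
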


\begin{proof}
Since $W$ is radially symmetric, we again have (with $r=|x|$)
$$
\mathcal{L}W=\partial_t W-(p-1)|\partial_rW|^{p-2}\partial_{r}^2W-\frac{N-1}{r}|\partial_rW|^{p-2}\partial_rW+|\partial_rW|^q.
$$
Setting $y:=|x|(T-t)^{\b}=r(T-t)^{\b}$ and taking into account that
$$
\a-1=(p-1)(\a+\b)+\b=q(\a+\b)=\frac{q}{p-2q}>0,
$$
we readily find
\begin{equation*}
\begin{split}
\mathcal{L}W=(T-t)^{\a-1}&\left[-\a f(y)-\b
y\partial_{y}f(y)-(p-1)|\partial_{y}f(y)|^{p-2} \partial^2_{y} f(y)\right.\\
&\left. \qquad -\frac{N-1}{y}|\partial_{y}f(y)|^{p-2}\partial_{y}f(y)+|\partial_{y}f(y)|^{q}\right].
\end{split}
\end{equation*}
Since
$$
\partial_yf(y)=-2A\gamma y(1+y^2)^{-\gamma-1}
$$
and
$$
\partial^2_{y}f(y)=-2A\gamma(1+y^2)^{-\gamma-1}\left[1-\frac{2(\gamma+1)y^2}{1+y^2}\right],
$$
we further obtain
\begin{equation*}
\begin{split}
\mathcal{L}W&=(T-t)^{\a-1}(1+y^2)^{-(\gamma+1)}\left[-\a
A-(\a-2\b\gamma)Ay^2\right.\\
& \qquad \left.+(p-1)(2A\gamma)^{p-1}\frac{(1+y^2)^{(2-p)(\gamma+1)}}{y^{2-p}}\left(1-\frac{2(\gamma+1)y^2}{1+y^2}\right)\right.\\
&\qquad \left.+(N-1)(2A\gamma)^{p-1}\frac{(1+y^2)^{(2-p)(\gamma+1)}}{y^{2-p}}+(2A\gamma)^qy^q(1+y^2)^{(1-q)(\gamma+1)}\right]\\
&=(T-t)^{\a-1}(1+y^2)^{-(\gamma+1)}H(y),
\end{split}
\end{equation*}
where
\begin{equation*}
\begin{split}
H(y)&:=(2A\gamma)^{p-1}\frac{(1+y^2)^{(2-p)(\gamma+1)}}{y^{2-p}}\left[p+N-2-\frac{2(p-1)(\gamma+1)y^2}{1+y^2}\right]\\
&+(2A\gamma)^qy^q(1+y^2)^{(1-q)(\gamma+1)}-A\a-(\a-2\b\gamma)Ay^2.
\end{split}
\end{equation*}
We first note that
\begin{equation}\label{interm12}
\begin{split}
\frac{1}{2}(2A\gamma)^q&y^q(1+y^2)^{(1-q)(\gamma+1)}-(\a-2\b\gamma)Ay^2\\
&\geq\frac{(2A\gamma)^q}{2}y^q y^{2(1-q)(\gamma+1)} -(\a-2\b\gamma)Ay^2\\
&\geq A y^2\left[\frac{(2\gamma)^q}{2}A^{q-1}-(\a-2\b\gamma)\right].
\end{split}
\end{equation}
In order to estimate the remaining terms in the expression of
$H(y)$, we have to split the analysis into two cases according to
the values of $y$. More precisely, set
$$
y_0:=\frac{1}{\sqrt{4(\gamma+1)}}\ .
$$

\medskip

\noindent $\bullet$ If $y\in[0,y_0]$, we find
\begin{equation}\label{interm13}
\begin{split}
(2A\gamma)^{p-1}&\frac{(1+y^2)^{(2-p)(\gamma+1)}}{y^{2-p}}\left[p+N-2-\frac{2(p-1)(\gamma+1)y^2}{1+y^2}\right]-A\a\\
&\geq(2A\gamma)^{p-1}  y^{p-2} \left[p-1-\frac{2(p-1)(\gamma+1)y^2}{1+y^2}\right]-A\a\\
&\geq (2A\gamma)^{p-1} (p-1) y^{p-2} \left[1-2(\gamma+1)y_0^2 \right]-A\a\\
&\geq (2A\gamma)^{p-1}\frac{p-1}{2} y_0^{p-2}-A\a \\
&\geq
A\left[\frac{(p-1)(2\gamma)^{p-1}}{2y_0^{2-p}}A^{p-2}-\a\right].
\end{split}
\end{equation}

\medskip

\noindent $\bullet$ If $y\geq y_0$, we have
\begin{equation}\label{interm14}
\begin{split}
& (2A\gamma)^{p-1}\frac{(1+y^2)^{(2-p)(\gamma+1)}}{y^{2-p}}\left[p+N-2-\frac{2(p-1)(\gamma+1)y^2}{1+y^2}\right]\\
& \qquad\qquad \qquad +\frac{1}{2}(2A\gamma)^qy^q(1+y^2)^{(1-q)(\gamma+1)}-A\a\\
&\geq -2(p-1)(1+\gamma)(2A\gamma)^{p-1}\frac{(1+y^2)^{(2-p)(\gamma+1)}}{y^{2-p}} \frac{y^2}{1+y^2} \\
& \qquad\qquad \qquad + \frac{(2A\gamma)^q}{2} y^q y^{2(1-q)(\gamma+1)}-A\a\\
&\geq -2(p-1)(1+\gamma)(2A\gamma)^{p-1}\frac{(1+y^2)^{(2-p)(\gamma+1)}}{y^{2-p}} + \frac{(2A\gamma)^q}{2} y^2 - A\a\\
&\geq\frac{(2A\gamma)^q}{4} y_0^2-A\a +\frac{(2A\gamma)^q}{4} y^2\\
& \qquad\qquad \qquad -2(p-1)(\gamma+1) (2A\gamma)^{p-1} \left( 1 + \frac{1}{y_0^2} \right)^{(2-p)(\gamma+1)} y^{(2-p)(2(\gamma+1)-1)}\\
&\geq
A \left[\frac{(2\gamma)^q}{4}y_0^2 A^{q-1}-\alpha \right] + (2A\gamma)^{p-1} y^{(2-p)/(1-q)}\\
& \qquad \times\left[\frac{(2\gamma)^{q-p+1}}{4} y_0^{(p-2q)/(1-q)}
A^{q-p+1} -2 (p-1)(\gamma+1) \left( \frac{1+y_0^2}{y_0^2}
\right)^{(2-p)(\gamma+1)}\right].
\end{split}
\end{equation}
Since $0<q<p-1<1$ and $p<2$, we realize that the right-hand sides of \eqref{interm12}, \eqref{interm13}, and \eqref{interm14} are simultaneously positive provided $A$ is sufficiently small. It then follows that $W$ defined in \eqref{superform} is a supersolution to \eqref{eq1} in $(0,T)\times\real^N$ for $A$ small enough.
\end{proof}

Completing the proof of Theorem~\ref{th.improv} becomes now an easy task. A noteworthy fact to be mentioned is that the choice of $A>0$ in Lemma~\ref{lem.super} is independent of the fixed extinction time $T>0$.

\begin{proof}[Proof of Theorem~\ref{th.improv}]
For $p=2$ and $q\in (0,1)$ Theorem~\ref{th.improv} is proved in
\cite[Theorem~2]{BLSS02}. Consider now $p<2$. Let $u_0$ be an
initial condition satisfying \eqref{hypIC} and \eqref{init.decay3}
and $T>1$. We fix $A\in (0,A_0)$ so that the function
$$
W(t,x)=(T-t)^{\a} A \left(1+(T-t)^{2\b}|x|^2\right)^{-q/2(1-q)}, \qquad (t,x)\in (0,T)\times\real^N,
$$
is a supersolution to \eqref{eq1} in $(0,T)\times\real^N$ according to Lemma~\ref{lem.super}. We notice that, for any
$x\in\real^N$,
\begin{equation*}
\begin{split}
W(0,x)&= T^{\a} A \left(1+T^{2\b}|x|^2\right)^{-q/2(1-q)}\\
&\geq
T^{\a} A \left(T^{2\b}+T^{2\b}|x|^2\right)^{-q/2(1-q)}\\
&=T^{\a-\b q/(1-q)} A \left(1+|x|^2\right)^{-q/2(1-q)}\\
&=T^{1/(1-q)} A \left(1+|x|^2\right)^{-q/2(1-q)}.
\end{split}
\end{equation*}
Since $1+|x|^2\leq(1+|x|)^2$, for any $x\in\real^N$, we further
obtain that
\begin{equation*}
\begin{split}
W(0,x)&\geq T^{1/(1-q)} A (1+|x|)^{-q/(1-q)}\\
&=\frac{T^{1/(1-q)} A }{C_0}C_0(1+|x|)^{-q/(1-q)}\geq
\frac{T^{1/(1-q)} A }{C_0}u_0(x),
\end{split}
\end{equation*}
for any $x\in\real^N$. Letting $T>0$ to be sufficiently large such
that
$$
\frac{T^{1/(1-q)} A }{C_0}>1,
$$
we have $W(0,x)\geq u_0(x)$ for any $x\in\real^N$. Consequently, by
the comparison principle,
$$
W(t,x)\geq u(t,x), \qquad (t,x)\in(0,T)\times\real^N\ .
$$
Noting that $W(t,0) = A(T-t)^\alpha$ and $W(t,x)\le A
(T-t)^{1/(1-q)} |x|^{-q/(1-q)}$ for $x\ne 0$, we realize that
$W(t,x)\longrightarrow 0$ as $t\to T$ for all $x\in\real^N$, which
in particular implies that $u$ vanishes in finite time. Moreover,
its extinction time satisfies $T_e\leq T$.
\end{proof}

\section{Single point extinction}\label{sec.single}

This rather long section is devoted to the proof of Theorem~\ref{th.single}. The single point extinction is an immediate
consequence of the double inclusion \eqref{incl.pos}, which is the main result to prove. This is divided into several steps, corresponding to subsections in the sequel.

\subsection{Lower bound for the positivity set}

We begin with the first inclusion in \eqref{incl.pos}.

\begin{lemma}\label{lem.lower}
Let $u$ be a solution to the Cauchy problem \eqref{eq1}-\eqref{eq2} with an initial condition $u_0$ satisfying \eqref{hypIC} which is radially symmetric and non-increasing. Assume further that $u_0$ is compactly supported and denote its finite extinction time by $T_e$. There exists $\varrho_1>0$ such that
$$
B(0,\varrho_1(T_e-t)^{\sigma})\subseteq\mathcal{P}(t) \qquad {\rm for \ any } \ t\in(T_e/2,T_e) \quad\text{ with }\;\; \sigma:=\frac{p-q-1}{(p-q)(1-q)},
$$
where $\mathcal{P}(t)$ denotes the positivity set of $u$
at time $t$, see \eqref{pos.set}.
\end{lemma}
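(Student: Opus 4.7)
The plan is to combine three ingredients already available: the preservation of radial monotonicity (Lemma~\ref{lem.radial}), the lower bound on $\|u(t)\|_\infty$ near the extinction time from Proposition~\ref{prop.ulbet}, and the gradient estimate \eqref{grad.est1} on $u^{(p-q-1)/(p-q)}$. The strategy is to exploit the fact that, by radial monotonicity, the $L^\infty$-norm is attained at the origin, and then to turn the Lipschitz character of $u^{(p-q-1)/(p-q)}(t,\cdot)$ into a quantitative positivity radius.

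First I would observe that, since $u_0$ is radially symmetric and non-increasing, Lemma~\ref{lem.radial} yields $u(t,0)=\|u(t)\|_\infty$ for every $t>0$. Combining this with the lower bound \eqref{zz7} of Proposition~\ref{prop.ulbet} gives
\[
u(t,0)^{(p-q-1)/(p-q)} \ge C_1^{(p-q-1)/(p-q)}\,(T_e-t)^{(p-q-1)/((p-q)(1-q))} = C_1^{(p-q-1)/(p-q)}\,(T_e-t)^{\sigma},
\]
which is exactly the scale we need on the right-hand side of the desired inclusion.

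Next I would invoke the gradient estimate \eqref{grad.est1} applied with $s=T_e/4$: since $t-s\ge T_e/4$ and $\|u(s)\|_\infty\le\|u_0\|_\infty$, there exists $K>0$ (depending on $N,p,q,u_0,T_e$) such that $|\nabla u^{(p-q-1)/(p-q)}(t,x)|\le K$ for all $(t,x)\in[T_e/2,T_e]\times\real^N$. Integrating this Lipschitz bound along the segment joining $0$ and $x$ yields
\[
u(t,x)^{(p-q-1)/(p-q)} \ge u(t,0)^{(p-q-1)/(p-q)} - K|x| \ge C_1^{(p-q-1)/(p-q)}\,(T_e-t)^{\sigma} - K|x|,
\]
which is strictly positive as soon as $|x|<\varrho_1(T_e-t)^{\sigma}$ with $\varrho_1 := C_1^{(p-q-1)/(p-q)}/(2K)$, proving the inclusion.

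There is no serious obstacle here: the proof is essentially a direct combination of previously established results. The only point requiring some care is that the gradient estimate \eqref{grad.est1} needs a fixed positive lower bound on $t-s$ (so that the $(t-s)^{-1/p}$ factor is controlled), which is why one fixes $s=T_e/4$ and restricts to $t\in(T_e/2,T_e)$; this matches exactly the time interval in the statement.
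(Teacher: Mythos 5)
Your proof is correct and follows exactly the same route as the paper's: radial monotonicity forces $\|u(t)\|_\infty=u(t,0)$, Proposition~\ref{prop.ulbet} gives the lower bound $u(t,0)\ge C_1(T_e-t)^{1/(1-q)}$, and the gradient estimate \eqref{grad.est1} supplies a uniform Lipschitz bound on $u^{(p-q-1)/(p-q)}(t,\cdot)$ for $t\in[T_e/2,T_e]$, which combined with the lower bound at the origin yields the desired positivity radius. The only difference is that you make explicit the choice of $s=T_e/4$ in \eqref{grad.est1} to get a time-uniform Lipschitz constant on $[T_e/2,T_e]$, which the paper leaves implicit.
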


\begin{proof}
Since $\|u(t)\|_\infty=u(t,0)$ due to the radial symmetry and monotonicity of $u(t)$ provided by Lemma~\ref{lem.radial}, we infer from Proposition~\ref{prop.ulbet} that
\begin{equation*}
u(t,0) \geq C_1(T_e-t)^{1/(1-q)}, \qquad t\in
(0,T_e).
\end{equation*}
Next, it readily follows from the mean-value theorem
and the gradient estimate \eqref{grad.est1} that
$$
u^{(p-q-1)/(p-q)}(t,x)-u^{(p-q-1)/(p-q)}(t,0)\geq -C|x|, \qquad
t\in[T_e/2,T_e],
$$
whence, using the above lower bound for $u(t,0)$,
$$
u^{(p-q-1)/(p-q)}(t,x)\ge C_1^{(p-q-1)/(p-q)}(T_e-t)^{\sigma}-C|x|>0,
$$
provided
$$
|x| <\frac{C_1^{\sigma(1-q)}}{C}(T_e-t)^{\sigma}, \quad
t\in(T_e/2,T_e),
$$
ending the proof.
\end{proof}

As already pointed out in the Introduction, Lemma~\ref{lem.lower}
holds true without requiring the assumptions \eqref{locvan} and
\eqref{init.data4} on $u_0$. It is in the next subsection where they
will be needed.

\subsection{Upper bound for the positivity set}

We move now to the more involved part, that is, to prove the second inclusion in \eqref{incl.pos} which turns out to be rather technical. Let $u_0$ be a radially symmetric and non-increasing initial condition satisfying \eqref{hypIC}. Assume further that $u_0$ is compactly supported in $B(0,R_0)$ for some $R_0>0$ and satisfies \eqref{locvan} for all $x_0\in\partial B(0,R_0)$ as well as \eqref{init.data4}. Denoting the corresponding solution to the Cauchy problem \eqref{eq1}-\eqref{eq2} by $u$ we infer from Theorem~\ref{th.inst} and Proposition~\ref{prop.iwt} that there is $T_e>0$ such that
\begin{equation}
\mathcal{P}(t) \subseteq B(0,R_0), \quad t\in (0,T_e), \quad\text{ and }\;\; \mathcal{P}(t)=\emptyset, \quad t\ge T_e. \label{zz1}
\end{equation}
Furthermore, Lemma~\ref{lem.radial} and the assumptions on $u_0$ guarantee that
\begin{equation}
\partial_r u(t,r) \le 0\ , \qquad (t,r) \in (0,T_e)\times (0,R_0)\ . \label{zz2}
\end{equation}

\begin{lemma}\label{lem.J}
There exists $\delta>0$ such that
\begin{equation}\label{jay}
|\nabla u(t,x)| \geq \delta^{1/(p-1)} |x|^{1/(p-1-q)}
u(t,x)^{1/(p-q)}, \qquad (t,x)\in(0,T_e)\times B(0,R_0)\ .
\end{equation}
\end{lemma}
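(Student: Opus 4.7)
The plan is to apply a Bernstein-type maximum principle argument to the auxiliary function
$$J(t,x) := |\nabla u(t,x)|^{p-1} - \delta\, |x|^{(p-1)/(p-1-q)}\, u(t,x)^{(p-1)/(p-q)},$$
defined on $[0,T_e) \times \overline{B(0,R_0)}$. The conclusion \eqref{jay} is equivalent to $J \geq 0$ for a suitably small $\delta > 0$, so I would aim to establish this nonnegativity by parabolic comparison.

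First I would verify the initial and lateral boundary conditions on $J$. Using the identity $\nabla u^{(p-q-1)/(p-q)} = \frac{p-q-1}{p-q}\, u^{-1/(p-q)}\, \nabla u$, the hypothesis \eqref{init.data4} rewrites as a pointwise lower bound for $|\nabla u_0|^{p-1}$, and one sees that $J(0,\cdot) \geq 0$ on $B(0,R_0)$ provided $\delta \leq \bigl(\tfrac{(p-q)\delta_0}{p-q-1}\bigr)^{p-1}$. On the lateral boundary $\{|x| = R_0\}$, Proposition~\ref{prop.iwt} forces $u \equiv 0$ outside $B(0,R_0)$, so both $u$ and $|\nabla u|$ vanish there and $J = 0$; at $x = 0$, radial symmetry forces $|\nabla u(t,0)| = 0$ and the factor $|x|^{(p-1)/(p-1-q)}$ vanishes, again giving $J(t,0) = 0$.

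The core analytic step is to rule out a strictly negative interior minimum of $J$. Working in the radial variable via \eqref{eq.paroprad} with $V := -\partial_r u \geq 0$ (Lemma~\ref{lem.radial}), I would differentiate \eqref{eq1} to obtain an evolution equation for $V^{p-1}$, insert it into the computation of the parabolic operator applied to $J$, and exploit the sign conditions $\partial_t J \leq 0$, $\partial_r J = 0$, $\partial_r^2 J \geq 0$ that would hold at such a minimum. The exponents $(p-1)/(p-1-q)$ and $(p-1)/(p-q)$ are dictated by the self-similar structure of \eqref{eq1}, and a careful bookkeeping of the resulting terms, aided by the sharp upper gradient bound $|\nabla u^{(p-q-1)/(p-q)}| \leq C$ from \cite[Theorem~1.3]{IL12} to control the $|\nabla u|^p$ contributions, should produce a contradiction when $\delta$ is sufficiently small.

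The main obstacle is the degeneracy of $\Delta_p$ at points where $|\nabla u| = 0$, combined with the possible vanishing of $u$ in the interior of $B(0,R_0)$, which prevents a direct pointwise use of the classical maximum principle. I would circumvent this by regularization, for instance replacing $|\nabla u|^{p-2}$ by $(|\nabla u|^2 + \varepsilon)^{(p-2)/2}$, proving the analog of \eqref{jay} for the resulting smooth approximation $u_\varepsilon$ with a constant $\delta$ independent of $\varepsilon$, and then passing to the limit $\varepsilon \to 0^+$. The technical regularization and the delicate parabolic inequality would naturally be relegated to the Appendix announced in the organization of the paper.
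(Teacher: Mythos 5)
Your overall strategy matches the paper's: a Friedman--Herrero-type Bernstein argument, with regularization of $\Delta_p$ (exactly the $(|\nabla u|^2+\varepsilon^2)^{(p-2)/2}$ you suggest) to obtain a smooth problem, a sign on the initial and lateral boundary from \eqref{init.data4} and Proposition~\ref{prop.iwt}, a contradiction at a hypothetical interior extremum, and a passage to the limit $\varepsilon\to 0$ with a $\delta$ chosen uniformly. The exponents you identify are the right ones.

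The one place you diverge from the paper is the choice of auxiliary function, and this is not merely cosmetic. The paper, following \cite{FH}, works with
\begin{equation*}
J(t,r) = r^{N-1}\,|\partial_r u|^{p-2}\partial_r u + \delta\, r^{\lambda} u^{\beta}, \qquad \lambda = N+\tfrac{q}{p-1-q},\ \beta=\tfrac{p-1}{p-q},
\end{equation*}
and proves $J\le 0$. Your proposal
\begin{equation*}
J_{\mathrm{stu}}(t,x) = |\nabla u|^{p-1} - \delta\,|x|^{(p-1)/(p-1-q)}\,u^{(p-1)/(p-q)}
\end{equation*}
is $-r^{1-N}$ times the paper's $J$, so the \emph{conclusions} $J\le 0$ and $J_{\mathrm{stu}}\ge 0$ are equivalent on $\{r>0\}$; but as candidates for a parabolic maximum principle they are not interchangeable. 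The weight $r^{N-1}$ is exactly what aligns the first term of $J$ with the radial flux appearing in $\partial_t(r^{N-1}u) = \partial_r\bigl(r^{N-1}|\partial_r u|^{p-2}\partial_r u\bigr) - r^{N-1}|\partial_r u|^q$, and this identity is what makes $\partial_t J - a_1(g^2)\partial_r^2 J + (\cdot)\partial_r J$ collapse into the signed remainders $\mathcal{R}_1,\mathcal{R}_2,\mathcal{R}_3$. Removing the weight introduces additional terms from differentiating $r^{1-N}$, and it is not at all clear that the signs still close after the Young-type estimates; this is precisely the "careful bookkeeping" you defer, which is the heart of the proof. So you have the right method, but the specific $J$ you propose is, at best, an untested variant of the paper's; at worst, it does not yield a usable parabolic inequality.

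Two smaller points. First, the claim that $|\nabla u|$ vanishes on $\{|x|=R_0\}$ is unjustified in general (only $u$ vanishes there, and $\partial_r u(t,R_0^-)\le 0$); what you actually need, $J_{\mathrm{stu}}(t,R_0)\ge 0$, still holds, but the stated reason is wrong, and for the regularized problem one must in fact track the nonzero boundary value $M_\varepsilon:=\sup_t u_\varepsilon(t,R_0)\to 0$. Second, for $p<2$ the regularization must be applied not only to $|\nabla u|^{p-2}$ but also to the absorption $|\nabla u|^q$, and the constant needs the additional small lift $u_{0,\varepsilon}+\varepsilon^\gamma$ of the initial datum so that $u_\varepsilon$ stays bounded away from zero on $(0,T_e)$; otherwise $F'(u_\varepsilon)$ and $F''(u_\varepsilon)$ with $F(z)=\delta z^\beta$, $\beta\in(0,1)$, are not controlled where $u_\varepsilon$ is small.
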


\begin{proof}
We adapt an idea from \cite[Lemma~2.2]{FH} which takes its origin in
the study of blow-up problems, see \cite[Lemma~2.2]{FML}. For $z>
0$, we set
$$
a(z) := z^{(p-2)/2}\ , \qquad b(z) := z^{q/2}\ ,
$$
so that, using the radial variable $r:=|x|$, Eq.~\eqref{eq1} reads
\begin{equation}
\partial_t(r^{N-1}u) =\partial_r \left( r^{N-1}a(|\partial_r u|^2)\partial_r u \right) - r^{N-1} b(|\partial_ru|^2)\ , \qquad (t,r)\in (0,T_e)\times (0,R_0)\ . \label{Q2}
\end{equation}
We define the auxiliary function
$$
J(t,r):=r^{N-1} a(|\partial_ru(t,r)|^2)\partial_ru(t,r)+c(r)F(u(t,r)), \qquad (t,r)\in (0,T_e)\times (0,R_0),
$$
where the functions $c\geq0$ and $F\geq 0$ are to be determined later and assumed to satisfy
\begin{equation}
c(0)=0 \ , \quad F(0) = 0\ , \quad F'\ge 0\ , \quad F'' \le
0\ . \label{Q0}
\end{equation}
We aim at finding $c$ and $F$ such that $J\leq 0$ in $(0,T_e)\times
(0,R_0)$. Since $F(0)=0$, we first note that \eqref{zz1} and
\eqref{zz2} imply that
\begin{equation}
J(t,0)=0\ , \qquad  J(t,R_0)=R_0^{N-1}|\partial_r
u(t,R_0)|^{p-2}\partial_ru(t,R_0)\leq 0, \qquad t\in (0,T_e)\ ,
\label{zz14}
\end{equation}
as well as
\begin{equation*}
J(t,r) = - r^{N-1} |\partial_r u(t,r)|^{p-1} + c(r) F(u(t,r)), \qquad (t,r)\in (0,T_e)\times (0,R_0)\ .
\end{equation*}
The following calculations are performed at a formal level. A
rigorous justification requires some approximating procedures and
will be completed in the Appendix, a detailed account of the formal
calculations being given for the easiness of the reading.
Introducing
$$
g:=-\partial_r u \ge 0 \;\;\text{ and }\;\; a_1(z) := 2 z a'(z) + a(z)\ , \quad z>0\ ,
$$
we infer from \eqref{Q2} that
\begin{align*}
\partial_t J & = a_1(g^2) \partial_t \left( r^{N-1} \partial_r u \right) + c(r) F'(u) \partial_t u \\
& = a_1(g^2) \partial_t \left[ \partial_r \left( r^{N-1} u \right) - (N-1) r^{N-2} u \right] + \frac{c(r)}{r^{N-1}} F'(u) \partial_t (r^{N-1} u ) \\
& = a_1(g^2) \partial_r \left[ \partial_r \left( r^{N-1} a(g^2)\partial_r u \right) - r^{N-1} b(g^2)\right] \\
& \qquad + \left( \frac{c(r)}{r^{N-1}} F'(u) - \frac{N-1}{r} a_1(g^2) \right) \left[ \partial_r \left( r^{N-1}a(g^2)\partial_r u \right) - r^{N-1} b(g^2) \right] \\
& = a_1(g^2) \partial_r^2  \left( r^{N-1} a(g^2)\partial_r u \right) - 2 r^{N-1} (a_1b')(g^2) \partial_r u \partial_r^2 u\\
& \qquad + \left( \frac{c(r)}{r^{N-1}} F'(u) - \frac{N-1}{r} a_1(g^2) \right) \partial_r \left( r^{N-1}a(g^2)\partial_r u \right) - c(r) F'(u) b(g^2)\ .
\end{align*}
Next
\begin{align*}
\partial_r J & = \partial_r  \left( r^{N-1} a(g^2)\partial_r u \right) + c'(r) F(u) + c(r) F'(u) \partial_r u\ , \\
\partial_r^2 J & = \partial_r^2  \left( r^{N-1} a(g^2)\partial_r u \right) + c''(r) F(u) + 2 c'(r) F'(u) \partial_r u \\
& \qquad + c(r) F''(u) g^2 + c(r) F'(u) \partial_r^2 u\ ,
\end{align*}
and it follows from the formulas for $\partial_t J$ and $\partial_r^2 J$ that
\begin{align*}
\partial_t J - a_1(g^2) \partial_r^2 J & = 2 r^{N-1} (a_1b')(g^2)g \partial_r^2 u \\
& \qquad + \left( \frac{c(r)}{r^{N-1}} F'(u) - \frac{N-1}{r} a_1(g^2) \right) \partial_r \left( r^{N-1}a(g^2)\partial_r u \right) \\
& \qquad - c(r) F'(u) b(g^2) - a_1(g^2) c''(r) F(u) + 2 a_1(g^2) c'(r) F'(u) g \\
& \qquad - c(r) F''(u) a_1(g^2) g^2 - a_1(g^2) c(r) F'(u) \partial_r^2 u \ .
\end{align*}
We now use the formula for $\partial_r J$ to replace the terms involving $\partial_r^2 u$ in the above identity and obtain
\begin{align*}
\partial_t J - a_1(g^2) \partial_r^2 J & = \left( 2 b'(g^2)g - \frac{c(r)}{r^{N-1}} F'(u) \right) r^{N-1} \partial_r \left( a(g^2) \partial_r u \right) \\
& \qquad + \left( \frac{c(r)}{r^{N-1}} F'(u) - \frac{N-1}{r} a_1(g^2) \right) \partial_r \left( r^{N-1}a(g^2)\partial_r u \right) \\
& \qquad - c(r) F'(u) b(g^2) - a_1(g^2) c''(r) F(u) + 2 a_1(g^2) c'(r) F'(u) g \\
& \qquad - c(r) F''(u) a_1(g^2) g^2 \\
& = 2 b'(g^2)g \left[ \partial_r J - c'(r) F(u) + c(r) F'(u) g \right] + 2(N-1) r^{N-2} (ab')(g^2) g^2 \\
& \qquad - \frac{N-1}{r} c(r) F'(u) a(g^2) g - \frac{N-1}{r} a_1(g^2) \left[ \partial_r J - c'(r) F(u) + c(r) F'(u) g \right] \\
& \qquad - c(r) F'(u) b(g^2) - c''(r) F(u) a_1(g^2) + 2 c'(r) F'(u) a_1(g^2) g \\
& \qquad - c(r) F''(u) a_1(g^2) g^2 \ .
\end{align*}
Consequently
\begin{align*}
& \partial_t J - a_1(g^2) \partial_r^2 J + \left( \frac{N-1}{r} a_1(g^2) - 2 b'(g^2)g \right) \partial_ r J \\
& = 2 \left[ (N-1) r^{N-2} a(g^2) g - c'(r) F(u) \right] b'(g^2) g + \left[ \frac{N-1}{r} c'(r) - c''(r) \right] F(u) a_1(g^2) \\
& \qquad + \left[ 2 c'(r) a_1(g^2) - \frac{N-1}{r} c(r) (a+a_1)(g^2) \right] F'(u) g - c(r) F''(u) a_1(g^2) g^2 \\
& \qquad - \left[ b(g^2) - 2 b'(g^2) g^2 \right] c(r) F'(u)\ .
\end{align*}
Since
$$
a_1(z) = (p-1) a(z) \le a(z) \;\;\text{ and }\;\; b(z) - 2 z b'(z)
= (1-q) b(z)\ , \qquad z>0\ ,
$$
we infer from the non-negativity of $c$ and $g$, and the
monotonicity \eqref{Q0} of $F$, that
\begin{equation}
\partial_t J - a_1(g^2) \partial_r^2 J + \left( \frac{N-1}{r} a_1(g^2) - 2 b'(g^2)g \right) \partial_ r J \le \sum_{i=1}^3 \mathcal{R}_i\ , \label{Q8}
\end{equation}
where
\begin{align*}
\mathcal{R}_1 & := 2 \left[ (N-1) r^{N-2} a(g^2) g - c'(r) F(u) \right] b'(g^2) g\ , \\
\mathcal{R}_2 & := \left[ \frac{N-1}{r} c'(r) - c''(r) \right] F(u) a_1(g^2) \ , \\
\mathcal{R}_3 & := 2 \left[ c'(r) - \frac{N-1}{r} c(r) \right] F'(u) a_1(g^2) g - c(r) F''(u) a_1(g^2) g^2 \\
& \qquad\qquad - (1-q) c(r) F'(u) b(g^2)\ .
\end{align*}
We now choose
$$
c(r) = r^\lambda\ , \quad r\ge 0\ , \qquad F(z) = \delta z^\beta\ , \quad z\ge 0\ ,
$$
where $\lambda > N$, $\delta\in (0,1)$, and $\beta\in (0,1)$ are yet to be determined. Note that the latter constraint on $\beta$ complies with \eqref{Q0}. With this choice,
\begin{equation}
\mathcal{R}_2 = - \lambda (\lambda-N) \delta (p-1)  r^{\lambda-2}  u^\beta g^{p-2} \ ,  \label{Q9}
\end{equation}
while
\begin{equation}
\mathcal{R}_1 = 2 r^{\lambda-1} b'(g^2) g \left[ (N-1) r^{N-1-\lambda} a(g^2) g - \delta\lambda u^\beta \right]\ . \label{Q10}
\end{equation}
Moreover, since $\lambda> N$ and $\beta \in (0,1)$,
\begin{align}
\mathcal{R}_3 & = \delta \left[ 2 (\lambda-N+1) \beta (p-1) r^{\lambda-1} u^{\beta-1} g^{p-1} \right. \nonumber \\
& \qquad\qquad \left. + \beta (1-\beta) (p-1) r^\lambda u^{\beta-2} g^p - (1-q) \beta r^\lambda u^{\beta-1} g^q \right] \nonumber\\
& \le \beta \delta \left[ \frac{2(\lambda-N+1)}{r} g^{p-1-q} + \frac{1-\beta}{u} g^{p-q} - (1-q) \right] r^\lambda u^{\beta-1} g^q\ . \label{Q11}
\end{align}
Now let $\kappa> 0$ and define
$$
\mathcal{J}_\kappa := \{ (t,r)\in (0,T_e)\times (0,R_0)\ :\ J(t,r) \ge \kappa\}\ .
$$
Owing to the definition of $J$ there holds
\begin{equation}
r^{N-1} g^{p-1} = r^{N-1} a(g^2) g \le \kappa + r^{N-1} g^{p-1} \le c(r) F(u) = \delta r^\lambda u^\beta \;\;\text{ in }\;\; \mathcal{J}_\kappa\ . \label{Q12}
\end{equation}
A first consequence of \eqref{Q12} and the positivity of $\kappa$ is
that $r>0$ and $u>0$ in $\mathcal{J}_\kappa$, so that \eqref{Q9}
yields
\begin{equation}
\mathcal{R}_2 = - \lambda (\lambda-N)\delta (p-1) r^{\lambda-2} u^\beta g^{p-2} <0
\;\;\text{ in }\;\; \mathcal{J}_\kappa \label{Q9a}
\end{equation}
in view of $\lambda >N$ and $p \in (1,2]$. Moreover,
we deduce from \eqref{Q10}, \eqref{Q11}, and  \eqref{Q12} that, in $\mathcal{J}_\kappa$,
\begin{align}
\mathcal{R}_1 & \le 2 r^{\lambda-1} b'(g^2) g \left[ (N-1) r^{-\lambda} \delta r^\lambda u^\beta - \delta\lambda u^\beta \right] \nonumber \\
& \le - 2 (\lambda-N+1) \delta r^{\lambda-1} u^\beta b'(g^2) g\le 0 \label{Q14}
\end{align}
and
\begin{align*}
\mathcal{R}_3 & \le \beta \delta \left[ \frac{2(\lambda-N+1)}{r} \left( \delta r^{\lambda-N+1} u^\beta \right)^{(p-1-q)/(p-1)} \right. \\
& \qquad\qquad \left. + \frac{1-\beta}{u} \left[ \delta r^{\lambda-N+1} u^\beta \right]^{(p-q)/(p-1)} - (1-q) \right] r^\lambda u^{\beta-1} g^q \\
& \le \beta \delta \left[ 2(\lambda-N+1) \delta^{(p-1-q)/(p-1)} r^{((\lambda-N)(p-1-q)-q)/(p-1)} u^{\beta(p-1-q)/(p-1)} \right. \\
& \qquad\qquad+ (1-\beta) \delta^{(p-q)/(p-1)} r^{(\lambda-N+1)(p-q)/(p-1)} u^{(\beta(p-q)-(p-1))/(p-1)} \\
 & \qquad\qquad - (1-q) \Big] r^\lambda u^{\beta-1} g^q\ .
\end{align*}
In order to ensure $\mathcal{R}_3\le 0$ in $\mathcal{J}_\kappa$, we choose
$$
\beta := \frac{p-1}{p-q} \in (0,1) \;\;\text{ and }\;\; \lambda := N + \frac{q}{p-1-q} > N\ .
$$
Recalling that $q<p-1$, we use \eqref{zz4} and \eqref{zz1} to obtain that, in $\mathcal{J}_\kappa$,
\begin{align}
\mathcal{R}_3 & \le \beta \delta \left[ 2(\lambda-N+1) \delta^{(p-1-q)/(p-1)}  \|u_0\|_\infty^{(p-1-q)/(p-q)} \right. \nonumber \\
& \qquad\qquad \left. + (1-\beta) \delta^{(p-q)/(p-1)} R_0^{(p-q)/(p-q-1)} - (1-q) \right] r^\lambda u^{\beta-1} g^q \nonumber\\
& \le - \frac{\beta \delta (1-q)}{2} r^\lambda u^{\beta - 1} b(g^2) \le 0\ , \label{Q14b}
\end{align}
provided $\delta$ is chosen suitably small (depending on $\|u_0\|_\infty$ and $R_0$). Collecting \eqref{Q8},
\eqref{Q9a}, \eqref{Q14}, and \eqref{Q14b}, we have shown that
\begin{equation}
\partial_t J - a_1(g^2) \partial_r^2 J + \left( \frac{N-1}{r} a_1(g^2) - 2 b'(g^2)g \right) \partial_ r J < 0  \ , \qquad (t,r)\in \mathcal{J}_\kappa \ , \label{Q15}
\end{equation}
as soon as $\delta$ is suitably small.

Assume now for contradiction that the maximum of $J$ in
$[0,T_e]\times [0,R_0]$ exceeds $\kappa$. It is attained at some
point $(t_0,r_0)\in [0,T_e]\times [0,R_0]$ and it readily follows
from \eqref{zz14} that $r_0\in (0,R_0)$. In addition,
\eqref{init.data4} ensures that, for $r\in (0,R_0)$,
\begin{align*}
J(0,r) &= r^{\lambda} u_0(r)^{(p-1)/(p-q)} \left[ \delta - \left( \frac{1}{r^{1/(p-1-q)}} \frac{|\partial_r u_0(r)|}{u_0(r)^{1/(p-q)}}\right)^{p-1} \right] \\
& = r^{\lambda} u_0(r)^{(p-1)/(p-q)} \left[ \delta - \left( \frac{p-q}{p-1-q}\frac{1}{r^{1/(p-1-q)}} |\partial_r u_0^{(p-1-q)/(p-q)}(r)| \right)^{p-1} \right] \\
& \leq r^\lambda u_0(r)^{(p-1)/(p-q)} \left[ \delta -  \left(
\frac{p-q}{p-1-q} \delta_0 \right)^{p-1} \right] \leq 0
\end{align*}
by choosing $\delta>0$ even smaller. Consequently, $t_0>0$ and we
conclude that
$$
\partial_t J(t_0,r_0)\ge 0\ , \quad \partial_r J(t_0,r_0) = 0\ , \quad \partial_r^2 J(t_0,r_0)\le 0\ ,
$$
which contradicts \eqref{Q15}, since
$(t_0,r_0)\in\mathcal{J}_\kappa$. Therefore, $J\le \kappa$ in
$[0,T_e]\times [0,R_0]$. Since $\kappa$ is an arbitrary positive
number, we have shown that $J\le 0$ in $[0,T_e]\times [0,R_0]$ from
which \eqref{jay} follows.

We stress once more that this proof holds only at formal level as
the coefficients in the equation solved by $J$ may be singular. A
complete justification is postponed to the Appendix.
\end{proof}

The proof of Theorem~\ref{th.single} is now an easy consequence of the previous analysis as shown below.

\begin{proof}[Proof of Theorem~\ref{th.single}]
We wish to prove \eqref{incl.pos}, from which the single point
extinction follows readily. The left inclusion in \eqref{incl.pos}
has been proved in Lemma~\ref{lem.lower} and we are thus left with
the proof of the right inclusion. Fix $t\in (T_e/2,T_e)$. Taking
into account the radial symmetry and monotonicity of $u(t)$, we
deduce from Lemma~\ref{lem.J} that
$$
|\nabla u(t,x)|\geq\delta^{1/(p-1)} |x|^{1/(p-1-q)}\
u(t,x)^{1/(p-q)}\ , \qquad x\in B(0,R_0)\ ,
$$
hence, recalling \eqref{zz1},
\begin{equation}\label{interm21}
\frac{p-1-q}{p-q} \delta^{1/(p-1)} |x|^{1/(p-1-q)} \leq \left| \nabla u^{(p-1-q)/(p-q)}(t,x) \right|\ , \qquad x\in \mathcal{P}(t)\ .
\end{equation}
For $x\in\mathcal{P}(t)$, the monotonicity properties of $u(t)$
guarantee that $\varrho x\in\mathcal{P}(t)$ for all $\varrho\in
[0,1]$ and, for any $\varrho\in (0,1)$, we infer from \eqref{exp}
and \eqref{interm21} that
\begin{align*}
u^{(p-1-q)/(p-q)}(t,x) & = u^{(p-1-q)/(p-q)}(t,\varrho x) + \int_\varrho^1 \left\langle \nabla u^{(p-1-q)/(p-q)}(t,\sigma x) , x \right\rangle\ d\sigma \\
& =  u^{(p-1-q)/(p-q)}(t,\varrho x) - \int_\varrho^1 \left| \nabla u^{(p-1-q)/(p-q)}(t,\sigma x) \right| |x|\ d\sigma \\
& \le \|u(t)\|_\infty^{(p-1-q)/(p-q)} - C |x| \int_\varrho^1 |\sigma x|^{1/(p-1-q)}\ d\sigma \\
& \le \|u(t)\|_\infty^{(p-1-q)/(p-q)} - C |x|^{(p-q)/(p-1-q)} \left(
1 - \varrho^{(p-q)/(p-1-q)} \right)\ .
\end{align*}
We now apply Proposition~\ref{prop.ulbet} and let $\varrho\to 0$ to obtain that, given $\vartheta\in (0,1)$,
$$
u^{(p-q-1)/(p-q)}(t,x)\leq C_2(\vartheta)^{(p-q-1)/(p-q)}
(T_e-t)^{\vartheta(p-q-1)/(p-2q)}-C|x|^{(p-q)/(p-1-q)}\ .
$$
Since $x\in\mathcal{P}(t)$, this implies in particular that the right-hand side of the previous inequality is positive, that is,
$$
|x| < C(\vartheta) (T_e-t)^{\vartheta(p-q-1)^2/(p-q)(p-2q)}\ .
$$
Consequently, choosing $\vartheta=p/2$,
$$
\mathcal{P}(t)\subseteq B(0,\varrho_2(T_e-t)^{\nu}), \qquad
\nu=\frac{p(p-q-1)^2}{2(p-q)(p-2q)}\ ,
$$
for some $\varrho_2>0$. Since $t\in (T_e/2,T_e)$ is arbitrary, we have established the right inclusion in \eqref{incl.pos} and thereby completed the proof.
\end{proof}

\appendix
\section{Proofs of Lemma~\ref{lem.J} and gradient estimates for $p=2$}

In this technical section we provide a fully rigorous
proof of Lemma~\ref{lem.J}, as well as a gradient estimate for
solutions to \eqref{eq1} for $p=2$. The latter, besides of its
interest as an independent result, provides an essential technical
tool in the proofs of our main results, and complements the gradient
estimates in \cite[Theorem~1.3]{IL12}, valid for $p\in(p_c,2)$.

Lemma~\ref{lem.J} was proved in Section~\ref{sec.single} at a formal
level, presenting the essential calculations that give the ideas and
essence of the proof, but allowing us at that point, for the
simplicity of the exposition, to use results such as the maximum
principle (or comparison principle) that are not automatically
granted when we deal with singular coefficients. This is why we
include the rigorous proof of this lemma here. To this end, we
introduce a regularization of \eqref{eq1}, already successfully used
by the authors in \cite[Section~6]{IL12}, in order to avoid the
difficulties coming from the singularity at points where $\nabla
u=0$. Let then $u$ be a solution to the Cauchy problem
\eqref{eq1}-\eqref{eq2} associated to an initial condition $u_0$
satisfying the assumptions~(a)-(c) of Theorem~\ref{th.single}. We
recall that $u$ vanishes identically after a finite time $T_e$ and
that its positivity set $\mathcal{P}(t)$ is included in $B(0,R_0)$
for all $t\ge 0$, see \eqref{zz1}.

For $\e\in(0,1/2)$, we define:
\begin{equation}\label{approx}
a_{\e}(z):=(z+\e^2)^{(p-2)/2}, \qquad b_{\e}(z):=(z+\e^2)^{q/2},
\qquad z\geq0,
\end{equation}
and consider the following Cauchy problem:
\begin{equation}\label{approx2}
\left\{\begin{array}{ll}\partial_t \tilde{u}_{\e}-{\rm
div}\left(a_{\e}(|\nabla \tilde{u}_{\e}|^2)\nabla
\tilde{u}_{\e}\right)+b_{\e}\left(|\nabla \tilde{u}_{\e}|^2\right) -\e^q =0, \quad (t,x)\in(0,\infty)\times\real^N, \\
\tilde{u}_{\e}(0)= u_{0, \e} +\e^{\gamma},\qquad
x\in\real^N,\end{array}\right.
\end{equation}
where $\gamma\in(0,p/4)\cap(0,q/2)$ is a small positive parameter
such that $\gamma<\min\{p-1,1-q\}$ and $u_{0,\e}\in
C^{\infty}(\real^N)$ is a non-negative smooth approximation of the
initial condition $u_0$, in the sense that it converges to $u_0$
uniformly on compact sets in $\real^N$ and satisfies
\begin{equation}\label{approx3}
0 \le u_{0,\e} \leq\|u_0\|_{\infty}, \qquad \|\nabla
u_{0,\e}\|_{\infty}\leq(1+C(u_0)\e)\|\nabla u_0\|_{\infty}.
\end{equation}
It is proved in \cite{BL99} and
\cite[Section 6]{IL12} that \eqref{approx2} has a unique classical
solution $\tilde{u}_\e$ and that, as $\e\to 0$, it is an
approximation of the solution $u$ to \eqref{eq1}-\eqref{eq2} with
initial condition $u_0$ in the following sense:
\begin{equation}\label{conv}
u(t,x)=\lim\limits_{\e\to0} \tilde{u}_{\e}(t,x), \qquad \nabla
u(t,x)=\lim\limits_{\e\to0}\nabla \tilde{u}_{\e}(t,x),
\end{equation}
for almost every $(t,x)\in (0,\infty)\times\real^N$, the first
convergence being actually uniform in compact sets of
$(0,\infty)\times\real^N$. In addition, if $u_0$ is radially
symmetric and non-increasing, then $u_{0,\e}$ can be chosen to be
radially symmetric and non-increasing as well, so that $x\mapsto
\tilde{u}_\e(t,x)$ is radially symmetric and non-increasing for any
$t\ge 0$ and $\e\in(0,1/2)$. We next define
\begin{equation*}
u_\e(t,x) := \tilde{u}_\e(t,x) - \e^q t\ , \qquad (t,x)\in (0,\infty)\times\real^N\ ,
\end{equation*}
and observe that the comparison principle and \eqref{approx2} imply that
\begin{equation}
u_\e(t,x) \ge \e^\gamma - \e^q t \ge \frac{\e^\gamma}{2}\ , \qquad (t,x)\in (0,\tau_\e)\times \real^N\ , \label{approx2c}
\end{equation}
with $\tau_\e := \e^{\gamma-q}/2$ and that $u_\varepsilon$ solves
\begin{equation}\label{approx2d}
\left\{\begin{array}{ll}\partial_t u_{\e}-{\rm
div}\left(a_{\e}(|\nabla u_{\e}|^2)\nabla
u_{\e}\right)+b_{\e}\left(|\nabla u_{\e}|^2\right) =0, \quad (t,x)\in(0,\infty)\times\real^N, \\
u_{\e}(0)= u_{0,\e} +\e^{\gamma},\qquad
x\in\real^N\ .\end{array}\right.
\end{equation}
Since $\tau_\varepsilon\to\infty$ as $\varepsilon\to 0$, we may
assume that $\varepsilon$ is taken sufficiently small to ensure
$T_e\le \tau_\varepsilon$. With these approximations in mind, we are
ready to give the complete proof of Lemma~\ref{lem.J} as well as
that of the gradient estimate \eqref{grad.est1} for $p=2$.

\subsection{Proof of Lemma~\ref{lem.J}}

Owing to the $C^1$-smoothness of $u_0$, the gradient
convergence in \eqref{conv} is also uniform on compact
subsets of $\real^N$. Consequently,
\begin{equation}
m_\e := \e + \|u_{0,\e} - u_0 \|_{C^1(B(0,R_0))} \mathop{\longrightarrow}_{\e\to 0} 0\ . \label{X1}
\end{equation}
Introducing
$$
r_\e := \min \{ r \in [0,R_0) \: : \: u_0 (r) \le m_\e^{1/4} \}\ ,
$$
the properties of $u_0$ assumed in Theorem~\ref{th.single} and \eqref{X1} ensure that there is $\e_0 \in (0, 1/2)$ such that
\begin{equation}
r_\e > s_\e := m_\e^{(p-1-q)/4(p-q)} \;\; \text{ and }\;\;
m_\e\in (0,1) \;\;\text{ for any }\;\; \e \in (0,\e_0)\ .\label{X2}
\end{equation}
We then infer from the radial monotonicity of $u_0$, \eqref{init.data4}, \eqref{X1}, and \eqref{X2} that, for $r\in [s_\e,r_\e]$,
\begin{align*}
|\partial_r u_{0,\e}(r)| & \ge |\partial_r u_0(r)| - m_\e \\
& \ge \delta_0 r^{1/(p-1-q)} u_0(r)^{1/(p-q)} - m_\e \\
& \ge \frac{\delta_0}{2} r^{1/(p-1-q)} u_{0,\e}^{1/(p-q)}(r) + \frac{\delta_0}{2} r^{1/(p-1-q)} \left( u_0^{1/(p-q)} - u_{0,\e}^{1/(p-q)} \right)(r) \\
& \qquad + \frac{\delta_0}{2} m_\e^{1/2(p-q)} - m_\e \\
& \ge \frac{\delta_0}{2} r^{1/(p-1-q)} u_{0,\e}^{1/(p-q)}(r) - \frac{\delta_0}{2} R_0^{1/(p-1-q)} m_\e^{1/(p-q)} + \frac{\delta_0}{2} m_\e^{1/2(p-q)} - m_\e \\
& \ge \frac{\delta_0}{2} r^{1/(p-1-q)} u_{0,\e}^{1/(p-q)}(r) \\
& \qquad + m_\e^{1/2(p-q)} \left( \frac{\delta_0}{2} -
\frac{\delta_0}{2}  R_0^{1/(p-1-q)} m_\e^{1/2(p-q)} -
m_\e^{(2(p-q)-1)/ 2(p-q)} \right)\ .
\end{align*}
Since $p-q>1$, we infer from \eqref{X1} and the above inequality
that, taking $\e_0$ smaller if necessary, there holds
\begin{equation}\label{approx3a}
|\partial_r u_{0,\e} (r)| \ge \frac{\delta_0}{2} r^{1/(p-q-1)} u_{0,\e} (r)^{1/(p-q)}\ , \qquad r \in [s_\e, r_\e]\ , \qquad \e\in (0,\e_0) \ .
\end{equation}
Now fix $\e \in (0, \e_0)$. Recalling that $a_\varepsilon$ and
$b_\varepsilon$ are given by \eqref{approx}, we define
$$
a_{1,\varepsilon}(z) := 2 z a_\varepsilon'(z) + a_\varepsilon(z)\ , \qquad z\ge 0\ ,
$$
and the auxiliary function
$$
J_\varepsilon(t,r) := r^{N-1} a_\varepsilon(|\partial_r u_\varepsilon(t,r)|^2) \partial_r u_\varepsilon(t,r) + c(r) F(u_\varepsilon(t,r))\ , \qquad (t,r) \in (0,T_e)\times (0,R_0)\ .
$$
Since $u_\e$ solves \eqref{approx2d} and
\begin{align*}
(p-1) a_\varepsilon(z) \le \ a_{1,\varepsilon}(z) & = (z+\varepsilon^2)^{(p-4)/2} [(p-1)z+\varepsilon^2] \le a_\varepsilon(z) \ , \\
b_\varepsilon(z) - 2zb_\varepsilon'(z) & = (z+\varepsilon^2)^{(q-2)/2} [(1-q) z + \varepsilon^2] \ge (1-q) b_\varepsilon(z)
\end{align*}
for $z\ge 0$, we may perform the same computations as in the proof of Lemma~\ref{lem.J} with $(a_\varepsilon,b_\varepsilon)$ instead of $(a,b)$ and derive the analogue of \eqref{Q8}:
\begin{equation}
\partial_t J_\varepsilon - a_{1,\varepsilon}(g_\varepsilon^2) \partial_r^2 J_\varepsilon + \left( \frac{N-1}{r} a_{1,\varepsilon}(g_\varepsilon^2) - 2 b_\varepsilon'(g_\varepsilon^2)g_\varepsilon \right) \partial_ r J_\varepsilon \le \sum_{i=1}^3 \mathcal{R}_{i,\varepsilon}\ , \label{Q8eps}
\end{equation}
where $g_\varepsilon := - \partial_r u_\varepsilon \ge 0$ and
\begin{align*}
\mathcal{R}_{1,\varepsilon} & := 2 \left[ (N-1) r^{N-2} a_\varepsilon(g_\varepsilon^2) g_\varepsilon - c'(r) F(u_\varepsilon) \right] b_\varepsilon'(g_\varepsilon^2) g_\varepsilon\ , \\
\mathcal{R}_{2,\varepsilon} & := \left[ \frac{N-1}{r} c'(r) - c''(r) \right] F(u_\varepsilon) a_{1,\varepsilon}(g_\varepsilon^2) \ , \\
\mathcal{R}_{3,\varepsilon} & := 2 \left[ c'(r) - \frac{N-1}{r} c(r) \right] F'(u_\varepsilon) a_{1,\varepsilon}(g_\varepsilon^2) g_\varepsilon - c(r) F''(u_\varepsilon) a_{1,\varepsilon}(g_\varepsilon^2) g_\varepsilon^2 \\
& \qquad\qquad - (1-q) c(r) F'(u_\varepsilon) b_\varepsilon(g_\varepsilon^2)\ .
\end{align*}
Observe that the positivity \eqref{approx2c} of $u_\e$ guarantees
that $F'(u_\e)$ and $F''(u_\e)$ are well-defined, even if $F$ is not
twice differentiable at zero. As in the proof of Lemma~\ref{lem.J}
we choose
$$
c(r) = r^\lambda\ , \quad r\ge 0\ , \qquad F(z) = \delta z^\beta\ , \quad z\ge 0\ ,
$$
where $\delta>0$ is to be determined and
$$
\lambda = N + \frac{q}{p-1-q} > N  \ , \qquad \beta = \frac{p-1}{p-q}\in (0,1)\ .
$$
With this choice,
\begin{equation}
\mathcal{R}_{2,\varepsilon} \le - \lambda (\lambda-N)  \delta (p-1) r^{\lambda-2} u_\varepsilon^\beta a_{\varepsilon}(g_\varepsilon^2) \ ,  \label{Q9eps}
\end{equation}
while
\begin{equation}
\mathcal{R}_{1,\varepsilon} = 2 r^{\lambda-1} b_\varepsilon'(g_\varepsilon^2) g_\varepsilon \left[ (N-1) r^{N-1-\lambda} a_\varepsilon(g_\varepsilon^2) g_\varepsilon - \delta \lambda u_\varepsilon^\beta \right] \label{Q10eps}
\end{equation}
and
\begin{align}
\mathcal{R}_{3,\varepsilon} & \le \beta \delta \left[ \frac{2(\lambda-N+1)}{r} \left( g_\varepsilon^2 + \varepsilon^2 \right)^{(p-1-q)/2} + \frac{1-\beta}{u_\varepsilon} \left( g_\varepsilon^2 + \varepsilon^2 \right)^{(p-q)/2} \right. \nonumber \\
& \qquad \qquad \qquad \qquad- (1-q) \Big] r^\lambda u_\varepsilon^{\beta-1} b_\varepsilon(g_\varepsilon^2)\ . \label{Q11eps}
\end{align}
Now let $\kappa> 0$ and define
$$
\mathcal{J}_{\kappa,\varepsilon} := \{ (t,r)\in (0,T_e)\times (0,R_0)\ :\ J_\varepsilon(t,r) \ge \kappa\}\ .
$$
Then
\begin{equation*}
\kappa + r^{N-1} a_\varepsilon(g_\varepsilon^2) g_\varepsilon \le c(r) F(u_\varepsilon) = \delta r^\lambda u_\varepsilon^\beta \;\;\text{ in }\;\; \mathcal{J}_{\kappa,\varepsilon}\ ,
\end{equation*}
from which we deduce that, if $\kappa\ge R_0^{N-1} \varepsilon^{p-1}$, then
\begin{equation}
r^{N-1} (g_\varepsilon^2 + \varepsilon^2 )^{(p-1)/2} \le r^{N-1} a_\varepsilon(g_\varepsilon^2) g_\varepsilon + R_0^{N-1} \varepsilon^{p-1} \le \delta r^\lambda u_\varepsilon^\beta  \;\;\text{ in }\;\; \mathcal{J}_{\kappa,\varepsilon}\ . \label{Q12eps}
\end{equation}
This inequality implies in particular that $r>0$ and $u_\varepsilon
>0$ in $\mathcal{J}_{\kappa,\varepsilon}$ and \eqref{Q9eps} yields
\begin{equation}
\mathcal{R}_2 \le - \lambda (\lambda-N)\delta (p-1) r^{\lambda-2} u_\varepsilon^\beta
\left( g_\varepsilon^2 + \varepsilon^2 \right)^{(p-2)/2} <0
\;\;\text{ in }\;\; \mathcal{J}_{\kappa,\varepsilon} \label{Q9aeps}
\end{equation}
in view of $\lambda >N$ and $p \in (1,2]$. We then infer from \eqref{Q10eps}, \eqref{Q11eps}, and \eqref{Q12eps} that, in $\mathcal{J}_{\kappa,\varepsilon}$,
\begin{equation}
\mathcal{R}_{1,\varepsilon} \le - 2 (\lambda-N+1) \delta r^{\lambda-1} u_\varepsilon^\beta b_\varepsilon'(g_\varepsilon^2) g_\varepsilon\le 0 \label{Q14eps}
\end{equation}
and
\begin{align}
\mathcal{R}_{3,\varepsilon} & \le \beta \delta \left[ 2(\lambda-N+1) \delta^{(p-1-q)/(p-1)} \left( \|u_0\|_\infty +\varepsilon^\gamma\right)^{(p-1-q)/(p-q)} \right. \nonumber\\
& \qquad\qquad \left. + (1-\beta) \delta^{(p-q)/(p-1)} R_0^{(p-q)/(p-1-q)} - (1-q) \right] r^\lambda u_\varepsilon^{\beta-1} b_\varepsilon(g_\varepsilon^2) \nonumber\\
& \le - \frac{\beta \delta (1-q)}{2} r^\lambda u_\varepsilon^{\beta -1} b_\varepsilon(g_\varepsilon^2)
\le 0  \ , \label{Q14beps}
\end{align}
provided $\delta$ is chosen suitably small (depending on $\|u_0\|_\infty$ and $R_0$) and independent of $\varepsilon\in
(0,\e_0)$ as $\|u_0\|_\infty +\varepsilon^\gamma \le \|u_0\|_\infty +1$. Collecting \eqref{Q8eps}, \eqref{Q9aeps}, \eqref{Q14eps}, and \eqref{Q14beps}, we end up with
\begin{equation}
\partial_t J_\varepsilon - a_{1,\varepsilon}(g_\varepsilon^2) \partial_r^2 J_\varepsilon + \left( \frac{N-1}{r} a_{1,\varepsilon}(g_\varepsilon^2) - 2 b_\varepsilon'(g_\varepsilon^2)g_\varepsilon \right) \partial_ r J_\varepsilon < 0 \qquad\mbox{for } (t,r)\in \mathcal{J}_{\kappa,\varepsilon} \ ,
\label{Q15eps}
\end{equation}
this inequality being true only for $\kappa\ge R_0^{N-1} \varepsilon^{p-1}$.

Next, introducing
$$
M_\varepsilon := \sup_{t\in [0,T_e]} u_\varepsilon(t,R_0)\ ,
$$
we infer from the monotonicity of $r\mapsto u_\varepsilon(t,r)$ that
\begin{equation}
J_\varepsilon(t,0)=0\ , \qquad J_\varepsilon(t,R_0) \le \delta R_0^\lambda M_\varepsilon^\beta\ , \quad t\in [0,T_e]\ . \label{Q16eps}
\end{equation}
In addition, given $r\in (0,R_0)$,
$$
J_\varepsilon(0,r) \le r^{N-1} a_\varepsilon(|\partial_r u_{0,\e}(r)|^2) \partial_r u_{0,\e}(r) + \delta r^\lambda u_{0,\e}^\beta(r) + \delta R_0^\lambda \varepsilon^{\gamma\beta} \ .
$$
As $\partial_r  u_{0,\e}(r) \le 0$, we obtain from \eqref{approx3} and \eqref{X1} that
\begin{equation}\label{Q17eps}
  J_\varepsilon(0,r) \le \delta s_\e^\lambda \| u_0 \|_\infty^\beta + \delta R_0^\lambda \varepsilon^{\gamma\beta}, \qquad r \in (0,s_\e) \ ,
\end{equation}
as well as
\begin{equation}\label{Q18eps}
  J_\varepsilon(0,r) \le \delta R_0^\lambda\left( m_\e + m_\e^{1/4} \right)^\beta + \delta R_0^\lambda \varepsilon^{\gamma\beta}, \qquad r \in (r_\e, R_0) \ .
\end{equation}
For $r \in [s_\e, r_\e]$, we now divide the analysis into
two regions with respect to the magnitude of $|\partial_r
u_{0,\e}(r)|$. Either $|\partial_r u_{0,\e}(r)| \le \varepsilon$ and
we deduce from \eqref{approx3a} that
$$
u_{0,\e}(r)^{1/(p-q)}\leq\frac{2\e}{\delta_0}r^{-1/(p-q-1)} \ .
$$
Thus, taking also into account that $\partial_r u_{0,\e}(r)\leq0$ for any $r\geq0$, we realize that
\begin{equation}\label{Q19eps}
\begin{split}
J_\varepsilon(0,r) & \le \delta r^\lambda (2 \varepsilon)^{p-1} \delta_0^{1-p} r^{-(p-1)/(p-1-q)} + \delta R_0^\lambda \varepsilon^{\gamma\beta} \\
& \le \delta \delta_0^{1-p} R_0^{N-1} (2 \varepsilon)^{p-1} + \delta R_0^\lambda \varepsilon^{\gamma\beta}\ .
\end{split}
\end{equation}
Or $|\partial_r u_{0,\e}(r)| > \varepsilon$ which implies that
$$
a_\varepsilon(|\partial_r u_{0,\e}(r)|^2) \ge 2^{(p-2)/2} |\partial_r u_{0,\e}(r)|^{p-2}\ .
$$
Therefore, using again \eqref{approx3a},
\begin{equation}\label{Q20eps}
\begin{split}
J_\varepsilon(0,r) & \le \delta r^\lambda u_{0,\e}(r)^\beta - 2^{(p-2)/2} r^{N-1} |\partial_r u_{0,\e}(r)|^{p-1} + \delta R_0^\lambda \varepsilon^{\gamma\beta} \\
& \le \left( \delta - 2^{-p/2} \delta_0^{p-1} \right) r^\lambda u_{0,\e}(r)^\beta + \delta R_0^\lambda \varepsilon^{\gamma\beta} \\
& \le \delta R_0^\lambda \varepsilon^{\gamma\beta}\ ,
\end{split}
\end{equation}
provided $\delta < 2^{-p/2} \delta_0^{p-1}$. In view of \eqref{X2}
and \eqref{Q17eps}-\eqref{Q20eps} we have thus shown that, if
$\delta$ is sufficiently small,
$$
J_\varepsilon(0,r) \le \delta \delta_0^{1-p} R_0^{N-1}
(2\varepsilon)^{p-1} + \delta \| u_0 \|_\infty^\beta s_\e^\lambda +
\delta R_0^\lambda \left((2m_\e)^{\beta/4} +
\varepsilon^{\gamma\beta} \right) \ , \qquad r\in (0,R_0)\ .
$$
Consequently, if $\delta$ is sufficiently small and
\begin{align*}
\kappa = \kappa_\varepsilon & := \delta \delta_0^{1-p} R_0^{N-1}
(2\varepsilon)^{p-1} +
\delta \| u_0 \|_\infty^\beta s_\e^\lambda + \delta R_0^\lambda \left( (2m_\e)^{\beta/4} + \varepsilon^{\gamma\beta} \right) \\
& \qquad + R_0^{N-1} \varepsilon^{p-1} + \delta R_0^\lambda M_\varepsilon^\beta \ ,
\end{align*}
then the parabolic boundary $\{0\}\times (0,R_0)$ and $[0,T_e)
\times \{0,R_0\}$ of $(0,T_e)\times (0,R_0)$ contains no point in
$\mathcal{J}_{\kappa_\varepsilon,\varepsilon}$. Recalling
\eqref{Q15eps} we may then argue as in the proof of
Lemma~\ref{lem.J} to conclude that
\begin{equation}
J_\varepsilon\le \kappa_\varepsilon \;\;\text{ in }\;\; (0,T_e)\times (0,R_0)\ . \label{Q100}
\end{equation}

\noindent To complete the proof, we observe that $M_\varepsilon\to
0$ as $\varepsilon\to 0$ due to the uniform convergence \eqref{conv}
and the vanishing of $u$ on $(0,T_e)\times\partial B(0,R_0)$.
Combining this fact with \eqref{X1} and \eqref{X2} yields
$$
\lim_{\varepsilon\to 0} \kappa_\varepsilon =0\ ,
$$
and we may let $\varepsilon\to 0$ in \eqref{Q100} and use \eqref{conv} and \eqref{X1} to obtain the expected result.

\subsection{Proof of \eqref{grad.est1} for $p=2$.}

Finally, we prove the gradient estimate \eqref{grad.est1} for $p=2$.

\begin{proposition}\label{prop.gradest}
 Consider an initial condition $u_0$ satisfying \eqref{hypIC} and denote the corresponding solution to \eqref{eq1}-\eqref{eq2} by $u$.
    Assume further that $p=2$ and $q \in (0,1)$. Then there is $C_1>0$ depending only on $q$ such that
    \begin{equation}\label{grad.est1a}
    \left|\nabla u^{(1-q)/(2-q)}(t,x)\right|\leq
    C_1\left(1+\|u_0\|_{\infty}^{(1-q)/(2-q)}t^{-1/2}\right),
  \end{equation}
  for $(t,x)\in (0,\infty)\times\real^N$.
\end{proposition}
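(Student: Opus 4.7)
The plan is to prove \eqref{grad.est1a} by a Bernstein-type argument running parallel to the one for $p<2$ in \cite[Theorem~1.3(v)]{IL12}, with all the computations carried out on the regularized problem \eqref{approx2d}. Recall that for $p=2$ the approximation $u_\varepsilon$ is classical, bounded above by $\|u_0\|_\infty+\varepsilon^\gamma$ and below by $\varepsilon^\gamma/2$ on $(0,\tau_\varepsilon)\times\real^N$, and solves $\partial_t u_\varepsilon=\Delta u_\varepsilon-(|\nabla u_\varepsilon|^2+\varepsilon^2)^{q/2}$. The pivotal step is to introduce $\beta:=(1-q)/(2-q)\in(0,1)$ and set $v_\varepsilon:=u_\varepsilon^\beta$, which, by a direct computation using the lower bound $u_\varepsilon\ge \varepsilon^\gamma/2$, leads to the semilinear equation
\begin{equation*}
\partial_t v_\varepsilon \;=\; \Delta v_\varepsilon \;+\; \frac{|\nabla v_\varepsilon|^2}{(1-q)\,v_\varepsilon} \;-\; \frac{\beta^{1-q}\,|\nabla v_\varepsilon|^q}{v_\varepsilon} \;+\; R_\varepsilon,
\end{equation*}
where $R_\varepsilon$ collects the corrections stemming from the regularization $b_\varepsilon(z)-z^{q/2}$ and vanishes locally uniformly as $\varepsilon\to 0$.

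Setting $P_\varepsilon:=|\nabla v_\varepsilon|^2$, I will differentiate this equation, pair with $2\nabla v_\varepsilon$, and use $\Delta|\nabla v_\varepsilon|^2 = 2|D^2 v_\varepsilon|^2 + 2\nabla v_\varepsilon\cdot\nabla\Delta v_\varepsilon$ to derive, after discarding the non-positive term $-2|D^2v_\varepsilon|^2$, the pointwise inequality
\begin{equation*}
\partial_t P_\varepsilon - \Delta P_\varepsilon - \mathcal{F}_\varepsilon\cdot\nabla P_\varepsilon \;\le\; -\frac{2P_\varepsilon^2}{(1-q)\,v_\varepsilon^2} \;+\; \frac{2\beta^{1-q}\,P_\varepsilon^{(q+2)/2}}{v_\varepsilon^2} \;+\; \tilde{R}_\varepsilon,
\end{equation*}
for an explicit vector field $\mathcal{F}_\varepsilon$ and a new remainder $\tilde{R}_\varepsilon\to 0$. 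Since $(q+2)/2<2$, the negative quadratic-in-$P_\varepsilon$ term dominates the positive one as soon as $P_\varepsilon\ge M_0:=[2(1-q)\beta^{1-q}]^{2/(2-q)}$, providing a genuine $-P_\varepsilon^2/v_\varepsilon^2$ dissipation.

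I will then apply the maximum principle on $(0,T]\times\real^N$ to the Bernstein auxiliary
\begin{equation*}
\Phi_\varepsilon(t,x) \;:=\; P_\varepsilon(t,x) \;-\; M_0 \;-\; \frac{L}{t} \;-\; \delta\,\exp\!\left(\frac{|x|^2}{4T}\right), \qquad L := (1-q)\left(\|u_0\|_\infty+1\right)^{2\beta},
\end{equation*}
with $\delta>0$ small and $T\in(0,\tau_\varepsilon)$ arbitrary. The term $-L/t$ drives $\Phi_\varepsilon\to-\infty$ as $t\downarrow 0$, while the Gaussian penalty forces $\Phi_\varepsilon\to-\infty$ as $|x|\to\infty$, so any positive supremum is attained at an interior point $(t_0,x_0)$ at which $\nabla\Phi_\varepsilon=0$ and $\Delta\Phi_\varepsilon\le 0$. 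Substituting these relations into the dissipation inequality above and using $v_\varepsilon\le(\|u_0\|_\infty+1)^{\beta}$ yields a contradiction unless $\Phi_\varepsilon(t_0,x_0)\le C(\delta+\varepsilon^\mu)$ for some $\mu>0$, so the same bound holds on the whole cylinder.

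Sending $\delta\to 0$ and then $\varepsilon\to 0$, and using the convergence \eqref{conv} of $\nabla u_\varepsilon$ together with the lower semicontinuity of $|\nabla\cdot|^2$, I recover $|\nabla u^{(1-q)/(2-q)}(t,x)|^2\le M_0 + L/t$ on $(0,T]\times\real^N$; since $T$ is arbitrary this is valid on $(0,\infty)\times\real^N$, and taking a square root yields \eqref{grad.est1a} with a constant $C_1$ depending only on $q$. The main obstacle is the rigorous execution of the Bernstein computation on the regularized problem, keeping the sign of the dissipative term $-P_\varepsilon^2/v_\varepsilon^2$ intact and controlling the $\varepsilon$-corrections and the singular factor $v_\varepsilon^{-1}$ uniformly so that they survive the limit; the maximum principle on the unbounded spatial domain is handled in the standard way by the Gaussian penalty, whose effect disappears as $\delta\to 0$.
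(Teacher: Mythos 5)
Your proposal is essentially the same Bernstein argument the paper uses, and the key decisions match: you pass to $v_\varepsilon=u_\varepsilon^{(1-q)/(2-q)}$ (the paper's $v_\varepsilon=f^{-1}(\tilde u_\varepsilon)$ is the same substitution up to a multiplicative constant and the shift $u_\varepsilon=\tilde u_\varepsilon-\varepsilon^q t$), compute a parabolic inequality for $P_\varepsilon=|\nabla v_\varepsilon|^2$ with the dominant quadratic dissipation $-\mathrm{const}\cdot P_\varepsilon^2/v_\varepsilon^2$, and compare with a barrier of the form $\mathrm{const}+\mathrm{const}/t$. Your displayed equation for $v_\varepsilon$ and the sign of the dissipative term are both right, and your choice of $L\sim(\|u_0\|_\infty+1)^{2\beta}$ is exactly the paper's $a$.

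The differences are expository rather than structural. First, where you rederive the Bernstein inequality for $P_\varepsilon$ from scratch and lump the regularization discrepancy into a residual $R_\varepsilon$ ``which vanishes,'' the paper instead quotes a ready-made identity from \cite{BL99} and isolates the $\varepsilon$-dependence entirely in the quantity $\Theta_\varepsilon(\xi)=2\xi b_\varepsilon'(\xi)-b_\varepsilon(\xi)+\varepsilon^q$, which it then bounds by $-(1-q)\xi^{q/2}$ uniformly in $\varepsilon\in(0,1/2)$ for all $\xi\ge 0$; this sidesteps the need to track a small but $\varepsilon$-singular remainder and is worth adopting, since your remainder actually contains a factor $v_\varepsilon^{-2/(1-q)}\varepsilon^2$ which is only controllable via the specific choice of $\gamma$ and requires a separate check. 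Second, you add a Gaussian penalty $\delta\exp(|x|^2/4T)$ to justify the maximum principle on the unbounded cylinder, whereas the paper invokes the comparison principle directly (the needed version being part of the framework of \cite{BL99}); your penalty argument is more self-contained but not strictly necessary. Third, by working with $u_\varepsilon=\tilde u_\varepsilon-\varepsilon^q t$ rather than $\tilde u_\varepsilon$ you only keep the lower bound $\ge\varepsilon^\gamma/2$ on the finite window $(0,\tau_\varepsilon)$ and must then let $\tau_\varepsilon\to\infty$; the paper's choice of $\tilde u_\varepsilon$ gives $\tilde u_\varepsilon\ge\varepsilon^\gamma$ on all of $(0,\infty)$ directly. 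None of these differences affects correctness, but the $\Theta_\varepsilon$ bookkeeping is the cleanest way to discharge the ``main obstacle'' you flag at the end.
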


\begin{proof}
  We fix $\e \in (0, 1/2)$ and denote the classical solution to \eqref{approx2} by $\tilde{u}_\e$. Observe that $a_\e \equiv 1$ due to $p=2$. In view of \eqref{approx3}, the comparison principle implies
    \begin{equation}\label{pge1}
      \e^\gamma \le \tilde{u}_\e (t,x) \le \|u_0 \|_\infty + \e^\gamma, \qquad (t,x) \in [0,\infty) \times \real^N \ .
    \end{equation}
    We further set
    $$
      f(\xi) := \frac{1-q}{2-q} \xi^{(2-q)/(1-q)}, \; \xi \ge 0, \qquad v_\e := f^{-1} (\tilde{u}_\e), \qquad w_\e := |\nabla v_\e|^2,
    $$
and note that $f \in C^2([0,\infty)) \cap C^\infty ((0,\infty))$ is
strictly increasing. Hence, according to \cite[formula~(10)]{BL99},
we have
    \begin{equation}\label{pge2}
      \cp_\e w_\e \le 2 \left( \frac{f^{\prime \prime}}{f^\prime} \right)^\prime (v_\e) w_\e^2
        - 2 \left( \frac{f^{\prime \prime}}{(f^\prime)^2} \right) (v_\e) \Theta_\e \left( (f^\prime)^2 (v_\e) w_\e \right) w_\e
    \end{equation}
    in $(0,\infty) \times \real^N$, where
    \begin{align*}
      & \cp_\e w_\e := \partial_t w_\e - \Delta w_\e + 2 \left( f^\prime (v_\e) b_\e^\prime \left( (f^\prime)^2 (v_\e) w_\e \right)
        - \left( \frac{f^{\prime \prime}}{f^\prime} \right) (v_\e) \right) \nabla v_\e \cdot \nabla w_\e \ , \\
        &\Theta_\e (\xi) := 2\xi b_\e^\prime (\xi) - b_\e(\xi) + \e^q \ , \qquad \xi \ge 0 \ .
    \end{align*}
    Since $q \in (0,1)$, we have
    \begin{align*}
      \Theta_\e (\xi) & = q \xi (\xi + \e^2)^{(q-2)/2} - (\xi + \e^2)^{q/2} + \e^q \\
      &= -(1-q) (\xi + \e^2)^{q/2} - q \e^2 (\xi + \e^2)^{(q-2)/2} + \e^q \\
        & \ge -(1-q) (\xi + \e^2)^{q/2} + (1-q) \e^q \\
        & \ge -(1-q) \xi^{q/2} \ , \qquad \xi \ge 0 \ .
    \end{align*}
    Hence, \eqref{pge2}, the choice of $f$, the nonnegativity of $w_\e$, and Young's inequality imply
    \begin{equation}\label{pge3}
    \begin{split}
      \cp_\e w_\e
        &\le - \frac{2}{(1-q) v_\e^2} w_\e^2 + 2 \cdot \frac{1}{1-q} v_\e^{(q-2)/(1-q)} \cdot (1-q) \left( v_\e^{2/(1-q)} w_\e \right)^{q/2} w_\e\\
        &\le  - \frac{2}{(1-q) v_\e^2} w_\e^2 + \frac{2}{(1-q)v_\e^2} w_\e^{1+ q/2}\\
        &\le  \frac{2}{(1-q) v_\e^2} \left(- w_\e^2 + \frac{2+q}{4} w_\e^2 + \frac{2-q}{4} \right)\\
        &=  - \frac{2-q}{2(1-q) v_\e^2} \left( w_\e^2 -1 \right) \ .
    \end{split}
    \end{equation}
    Considering next
    $$
      W(t) := 1 + \frac{a}{t}, \; t >0, \qquad\mbox{with } \;
        a := 2 \left( \frac{1-q}{2-q} \right)^{q/(2-q)} (\|u_0\|_\infty +1)^{2(1-q)/(2-q)},
    $$
    noticing that \eqref{pge1} implies
    $$
      v_\e = \left( \frac{2-q}{1-q} \tilde{u}_\e \right)^{(1-q)/(2-q)} \le \left( \frac{2-q}{1-q}  (\|u_0\|_\infty +1) \right)^{(1-q)/(2-q)},
    $$
    and using the fact that $a>0$, we obtain
    \begin{align*}
     \cp_\e W + \frac{2-q}{2(1-q) v_\e^2} \left( W^2 -1 \right)
        &\ge -\frac{a}{t^2} + \frac{2-q}{2(1-q) v_\e^2} \cdot \frac{a^2}{t^2} \\
        &\ge \frac{a}{t^2} \left[ -1 + \frac{a}{2} \left( \frac{2-q}{1-q} \right)^{q/(2-q)}
        (\|u_0\|_\infty +1)^{-2(1-q)/(2-q)} \right] \\
        & \ge 0
    \end{align*}
    for any $t>0$. Since $w_\e (0,x) < W(0) = \infty$ for $x \in \real^N$, we deduce from \eqref{pge3} and
    the comparison principle that
    $$
    \left( \frac{2-q}{1-q} \right)^{(1-q)/(2-q)} \left| \nabla \tilde{u}_\e^{(1-q)/(2-q)} (t,x) \right|
    = |\nabla v_\e (t,x) | = w_\e^{1/2} (t,x) \le \left(1+ \frac{a}{t} \right)^{1/2}
    $$
    in $(0,\infty) \times \real^N$. Letting $\e \searrow 0$ and recalling \eqref{conv}, we end up with \eqref{grad.est1a}.
\end{proof}

\section*{Acknowledgments}

R. I. is partially supported by the Spanish project MTM2012-31103
and the Severo Ochoa Excellence project SEV-2011-0087.
C. S. acknowledges the support of the Carl Zeiss foundation. Part of
this work was done while C. S. held a one month invited position at
the Institut de Mathématiques de Toulouse, and during visits of R.
I. to the Institut de Mathématiques de Toulouse. Both authors would
like to express their gratitude for the support and hospitality.

\bibliographystyle{plain}

\begin{thebibliography}{}

\end{thebibliography}


\begin{thebibliography}{1}

\bibitem{Ab98} U. G.~Abdullaev, \emph{Instantaneous shrinking of the support of solutions to a nonlinear degenerate parabolic equation}, Math. Notes, \textbf{63} (1998), no. 3, 285--292.

\bibitem{ATU04} D. Andreucci, A.~F.~Tedeev, and M. Ughi,
\emph{The Cauchy problem for degenerate parabolic equations with
source and damping}, Ukrainian Math. Bull., \textbf{1} (2004),
1--23.

\bibitem{BSW02} M.~Ben-Artzi, Ph.~Souplet, and F.B.~Weissler, \emph{The local theory for viscous Hamilton-Jacobi equations in Lebesgue spaces},
J. Math. Pures Appl., \textbf{81} (2002), 343--378.

\bibitem{BKaL04}
S. Benachour, G. Karch, and Ph. Lauren\ced{c}ot, \emph{Asymptotic
profiles of solutions to viscous Hamilton-Jacobi equations}, J.
Math. Pures Appl., \textbf{83} (2004), 1275--1308.

\bibitem{BL99}
S. Benachour and Ph. Lauren\ced{c}ot, \emph{Global solutions to
viscous Hamilton-Jacobi equations with irregular initial data},
Comm. Partial Differential Equations, \textbf{24} (1999), no. 11-12,
1999--2021.

\bibitem{BLS01}
S. Benachour, Ph. Lauren\ced{c}ot, and D. Schmitt, \emph{Extinction
and decay estimates for viscous Hamilton-Jacobi equations in
$\real^N$}, Proc. Amer. Math. Soc., \textbf{130} (2001), no. 4,
1103--1111.

\bibitem{BLSS02}
S. Benachour, Ph. Lauren\ced{c}ot, D. Schmitt, and Ph. Souplet,
\emph{Extinction and non-extinction for viscous Hamilton-Jacobi
equations in $\real^N$}, Asympt. Anal., \textbf{31} (2002),
229--246.

\bibitem{BRV97}
S. Benachour, B. Roynette, and P. Vallois, \emph{Asymptotic
estimates of solutions of $u_t-\frac{1}{2}\Delta u=-|\nabla u|$ in
$\real_{+}\times\real^{d}$, $d\geq2$}, J. Funct. Anal., \textbf{144}
(1997), 301--324.

\bibitem{BVD13} M.-F.~Bidaut-V\'eron and N.A.~Dao, \emph{$L^\infty$
estimates and uniqueness results for nonlinear parabolic equations
with gradient absorption terms}, Nonlinear Anal., \textbf{91} (2013), 121--152.

\bibitem{BGK04} P. Biler, M. Guedda, and G. Karch, \emph{Asymptotic
properties of solutions of the viscous Hamilton-Jacobi equation}, J.
Evolution Equations, \textbf{4} (2004), 75--97.

\bibitem{BU}
M. Borelli and M. Ughi, \emph{The fast diffusion equation with
strong absorption: the instantaneous shrinking phenomenon},
Rend. Ist. Mat. Univ. Trieste, \textbf{26} (1994), 109--140.

\bibitem{De08}
S.P.~Degtyarev, \emph{Conditions for instantaneous support shrinking
and sharp estimates for the support of the solution of the Cauchy
problem for a doubly non-linear parabolic equation with absorption},
Sb. Math., \textbf{199} (2008), no. 4, 511--538.

\bibitem{DPS}
M. del Pino and M. Sa\'ez, \emph{Asymptotic description of vanishing
in a fast-diffusion equation with absorption}, Differential Integral
Equations, \textbf{15} (2002), no. 8, 1009--1023.

\bibitem{EK}
L. C. Evans and B. F. Knerr, \emph{Instantaneous shrinking of the
support of nonnegative solutions to certain nonlinear parabolic
equations and variational inequalities}, Illinois J. Math.,
\textbf{23} (1) (1979), 153--166.

\bibitem{FGV}
R. Ferreira, V. A. Galaktionov, and J. L. V\'azquez, \emph{Uniqueness
of asymptotic profiles for an extinction problem}, Nonlinear Anal.,
\textbf{50} (2002), no. 4, 495--507.

\bibitem{FV}
R. Ferreira and J. L. V\'azquez, \emph{Extinction behaviour for fast
diffusion equations with absorption}, Nonlinear Anal., \textbf{43}
(2001), no. 8, 943--985.

\bibitem{FH}
A. Friedman and M. A. Herrero, \emph{Extinction properties of
semilinear heat equations with strong absorption}, J. Math. Anal. Appl., \textbf{124} (1987), no. 2, 530--546.

\bibitem{FML}
A. Friedman and B. McLeod, \emph{Blow-up of positive solutions of
semilinear heat equations}, Indiana Univ. Math. J., \textbf{34}
(1985), no. 2, 425--447.

\bibitem{GL07} Th. Gallay and Ph. Lauren\c cot,
\emph{Asymptotic behavior for a viscous Hamilton-Jacobi equation
with critical exponent}, Indiana Univ. Math. J., \textbf{56} (2007),
459--479.

\bibitem{GGK03} B. H.~Gilding, M.~Guedda, and R.~Kersner, \emph{The Cauchy problem for $u_t = \Delta u + |\nabla u|^q$},
J. Math. Anal. Appl., \textbf{284} (2003), 733--755.

\bibitem{GK90}
B. H. Gilding and R. Kersner, \emph{Instantaneous shrinking in
nonlinear diffusion-convection}, Proc. Amer. Math. Soc, \textbf{109}
(1990), no. 2, 385--394.

\bibitem{Gi05} B. H.~Gilding, \emph{The Cauchy problem for $u_t = \Delta u + |\nabla u|^q$, large-time behaviour}, J. Math. Pures Appl., \textbf{84} (2005), 753--785.

\bibitem{HV89}
M. A.~Herrero and J. J. L.~Vel\'azquez, \emph{On the dynamics of a semilinear heat equation with strong absorption}, Comm. Partial Differential Equations, \textbf{14} (1989), no. 12, 1653--1715.

\bibitem{HV92}
M. A.~Herrero and J. J. L.~Vel\'azquez, \emph{Approaching an extinction point in one-dimensional semilinear heat equations with strong absorption}, J. Math. Anal. Appl. \textbf{170} (1992), 353--381.

\bibitem{IL12}
R.~Iagar and Ph.~Lauren\ced{c}ot, \emph{Positivity, decay and
extinction for a singular diffusion equation with gradient
absorption}, J. Funct. Anal., \textbf{262} (2012), no. 7,
3186--3239.

\bibitem{K90}
A. S.~Kalashnikov, \emph{Conditions for the instantaneous
compactifications of carriers of solutions of semilinear parabolic equations and systems}, Math. Notes, \textbf{47} (1990), no.
1-2, 49--53.

\bibitem{K93}
A. S.~Kalashnikov, \emph{Quasilinear degenerate parabolic equations with singular lowest terms and growing initial data}, Differential Equations, \textbf{29} (1993), no. 6,
857--866.

\bibitem{KN92}
R. Kersner and F. Nicolosi, \emph{The nonlinear heat equation with
absorption: effects of variable coefficients}, J. Math. Anal. Appl.,
\textbf{170} (1992), no. 2, 551--566.

\bibitem{KS96}
R. Kersner and A. Shishkov, \emph{Instantaneous shrinking of the
support of energy solutions}, J. Math. Anal. Appl, \textbf{198}
(1996), no. 3, 729--750.

\bibitem{HJBook}
Ph. Lauren\ced{c}ot, \emph{Large time behavior for diffusive
Hamilton-Jacobi equations}, in Topics in Mathematical Modeling,
Lecture Notes, vol. 4, Jindrich Necas Center for Mathematical
Modeling, Praha, 2008.

\end{thebibliography}


\end{document}